\numberwithin{equation}{subsection}
\newtheoremstyle{plain2}    
   {}            
   {}            
   {\itshape}    
   {}            
   {\bfseries}   
   {.}           
   {5pt plus 1pt minus 1pt}  
   {{\thmnumber{#1} \thmname{#2}{\thmnote{ (#3)}}}}          
\theoremstyle{plain2}
\newtheorem{thm}[equation]{Theorem}
\newtheorem{theorem}[equation]{Theorem}
\newtheorem{cor}[equation]{Corollary}
\newtheorem{lemma}[equation]{Lemma}
\newtheorem{prop}[equation]{Proposition}
\newtheoremstyle{definition2}    
   {}
   {}
   {\normalfont}
   {}
   {\bfseries}
   {.}
   {5pt plus 1pt minus 1pt}
   {{\thmnumber{#1} \thmname{#2}{\thmnote{#3}}}}
\theoremstyle{definition2}
\newtheorem{rem}[equation]{Remark}
\newtheorem{example}[equation]{Example}
\newtheorem*{theorem*}{Theorem}
\newtheoremstyle{stepstyle}
   {}     {}
   {\normalfont}
   {\parindent}
   {\itshape}
   {}
   {5pt plus 1pt minus 1pt}
   {{\thmname{#1} \thmnumber{#2}:{\thmnote{#3}}}}
\theoremstyle{stepstyle}
\newtheoremstyle{point}
   {}     {}
   {\normalfont}
   {}
   {\bfseries}
   {}
   {5pt plus 1pt minus 1pt}
   {{\thmname{#1}(\thmnumber{#2})\thmnote{ #3.}}}
\theoremstyle{point}
\newtheorem{point}[equation]{}
\newtheoremstyle{subpoint}
   {}     {}            
   {\normalfont}
   {}                   
   {\normalfont}
   {}
   {5pt plus 1pt minus 1pt}
   {{\thmname{#1}{\bf (\thmnumber{#2})}\thmnote{ #3.}}}
\theoremstyle{subpoint}
\newtheorem{subpoint}[equation]{}
\newcommand{\spa}[1]{\begin{subpoint}#1\end{subpoint}}           
\newcommand{\D}{{\mathcal D}}
\newcommand{\N}{\ensuremath{\mathbb{N}}}
\newcommand{\Z}{\ensuremath{\mathbb{Z}}}
\newcommand{\Q}{\ensuremath{\mathbb{Q}}}
\newcommand{\R}{\ensuremath{\mathbb{R}}}
\newcommand{\C}{\ensuremath{\mathbb{C}}}
\newcommand{\CP}{\ensuremath{\mathbb{CP}}}
\newcommand{\cX}{\ensuremath{\mathscr{X}}}
\newcommand{\mm}{\ensuremath{\mathfrak{m}}}
\newcommand{\fp}{\ensuremath{\mathfrak{p}}}
\newcommand{\caM}{\ensuremath{\mathcal{M}}}
\newcommand{\caC}{\ensuremath{\mathcal{C}}}
\newcommand{\caO}{\ensuremath{\mathcal{O}}}
\newcommand{\caI}{\ensuremath{\mathcal{I}}}
\newcommand{\cC}{\ensuremath{\mathscr{C}}}
\newcommand{\cY}{\ensuremath{\mathscr{Y}}}
\newcommand{\cZ}{\ensuremath{\mathscr{Z}}}
\newcommand{\cW}{\ensuremath{\mathscr{W}}}
\renewcommand{\R}{\ensuremath{\mathbb{R}}}
\renewcommand{\C}{\ensuremath{\mathbb{C}}}
\renewcommand{\cY}{\ensuremath{\mathscr{Y}}}
\newcommand{\Hilb}{\ensuremath{\mathrm{Hilb}}}
\newcommand{\Spec}{\ensuremath{\mathrm{Spec}\,}}
\newcommand{\redu}{\mathrm{red}}
\newcommand{\Sym}{\mathrm{Sym}}
\newcommand{\pr}{\mathrm{pr}}
\newcommand{\Hom}{\mathrm{Hom}}
\newcommand{\an}{\mathrm{an}}
\newcommand{\divisor}{\mathrm{div}}
\newcommand{\weight}{\mathrm{wt}}
\newcommand{\rank}{\mathrm{rank}}
\newcommand{\Sk}{\mathrm{Sk}}
\newcommand{\gp}{\mathrm{gp}}
\title{The Essential Skeleton of a Product of Degenerations}
\author{Morgan V Brown}
\address{Department of Mathematics, University of Miami, Coral Gables, FL 33146 USA}
\email{mvbrown@math.miami.edu}
\author{Enrica Mazzon}
\address{Department of Mathematics, Imperial College London, 180 Queen's Gate, London SW72AZ, UK}
\email{e.mazzon15@imperial.ac.uk}
\date{}
\begin{document}
\maketitle

\begin{abstract}

We study the problem of how the dual complex of the special fiber of an snc degeneration $\cX_R$ changes under products. We view the dual complex as a skeleton inside the Berkovich space associated to $X_K$. Using the Kato fan, we define a skeleton $\Sk(\cX_R)$ when the model $\cX_R$ is log-regular. We show that if $\cX_R$ and $\cY_R$ are log-regular, and at least one is semistable, then $\Sk(\cX_R\times_R \cY_R) \simeq \Sk(\cX_R)\times \Sk(\cY_R)$. 

The essential skeleton $\Sk(X_K)$, defined by Musta\c{t}\u{a} and Nicaise, is a birational invariant of $X_K$ and is independent of the choice of $R$-model. We extend their definition to pairs, and show that if both $X_K$ and $Y_K$ admit semistable models,  $\Sk(X_K\times_K Y_K) \simeq \Sk(X_K)\times \Sk(Y_K)$. 

As an application, we compute the homeomorphism type of the dual complex of some degenerations of hyper-K{\"a}hler varieties. We consider both the case of the Hilbert scheme of a semistable degeneration of K3 surfaces, and the generalized Kummer construction applied to a semistable degeneration of abelian surfaces. In both cases we find that the dual complex of the $2n$-dimensional degeneration is homeomorphic to either a point, $n$-simplex, or $\mathbb{C}\mathbb{P}^n$, depending on the type of the degeneration.

\end{abstract}

\section{Introduction}

Let $R$ be a discrete valuation ring with quotient field $K$ and residue field $k$, and let $X$ be a smooth proper variety over $K$. While there may be no way to extend $X$ to a smooth proper variety over $R$, in $\text{char}(k)=0$ resolution of singularities guarantees that we can always produce an $R$ model $\cX$ where the special fiber $\cX_k$ is a strict normal crossings (snc) divisor. Given such a model, we associate the dual complex $\D(\cX_k)$, which is the dual intersection complex of the components of the special fiber. 

The dual complex of the special fiber of a such degeneration reflects the geometry of the generic fiber. If the generic fiber is rationally connected, then the dual complex of the special fiber is contractible \cite{deFernexKollarXu2012}. For Calabi-Yau varieties, degenerations are classified by the action of monodromy on the cohomology. The principle is that the degenerations with maximally unipotent actions have the most rich combinatorial structure in the dual complex. In this case, the dual complex is always a $\mathbb{Q}$-homology sphere, and Koll{\'a}r and Xu \cite{KollarXu} show that it is a sphere if $n\leqslant 3$ or $n \leqslant 4$ and the special fiber of a minimal dlt model is snc. 

The goal of this work is to understand the dual complex of a model for the product of two smooth proper varieties over $K$. We consider this problem from two perspectives.

\subsection{Skeletons of Berkovich spaces.} The first is via the theory of Berkovich spaces. In this setting we assume that $K$ is complete with respect to the valuation induced by $R$, which gives rise to a non-archimedean norm on $K$. In \cite{Berkovich1990}, Berkovich develops a theory of analytic geometry over $K$. He associates a $K$-analytic space to $X$; each point corresponds to a real valuation on the residue field of a point of $X$, extending the discrete valuation on $K$. This space, denoted by $X^\an$, is called the Berkovich space associated to $X$.

From any snc model $\cX$ of $X$ one can construct a subspace of $X^\an$, called the Berkovich skeleton of $\cX$ and denoted by $\Sk(\cX)$: it is homeomorphic to the dual intersection complex of the divisor $\cX_k$ \cite{MustataNicaise}.
The Berkovich skeletons turn out to be relevant in the study of the topology of $X^\an$.  They shape the Berkovich space, as $X^\an$ is homeomorphic to the inverse limit $\underleftarrow{\lim}\Sk(\cX)$ where $\cX$ runs through all snc models of $X$. Also, the homotopy type of $X^\an$ is determined by any snc model $\cX$: indeed, Berkovich and Thuillier prove that $\Sk(\cX)$ is a strong deformation retract of $X^\an$ \cite{Berkovich1990, Thuillier2007}.

\subsection{The dual complex of a dlt model.} The other approach to the study of the dual complexes comes from birational geometry. In this setting, we consider a pair $(\cX, \Delta_\cX)$ over the germ of a curve. In the log general type case, running the minimal model program distinguishes a canonical model for the degeneration $(\cX, \Delta_\cX)$ \cite{Alexeev, KollarShepherd-Barron, HaconMcKernanXu}, at the cost of possibly introducing worse singularities. If we are willing to tolerate some ambiguity in our choice of model, we can choose instead to produce a dlt minimal model. One advantage of dlt models is that they are expected to exist for all pairs admitting a log pluricanonical form. The singularities are mild enough that it is possible to define the dual complex as the dual intersection complex of divisors of coefficient $1$, denoted $\D^{=1}(\cX,\Delta_\cX)$. In \cite{deFernexKollarXu2012}, de Fernex, Koll{\'a}r, and Xu investigate how the dual complex is affected by the operations of the minimal model program. They show, under mild hypotheses, that every step of the MMP induces a homotopy equivalence between dual complexes. Moreover, $\D^{=1}(\cX,\Delta_\cX)$ is a PL invariant under log crepant birational maps.

\subsection{The essential skeleton.} Recently there has been much interest in a synthesis of the two approaches. Kontsevich and Soibelmann \cite{KontsevichSoibelman} define a version of the skeleton of a variety with trivial canonical bundle, which detects the locus of simple poles along the special fiber of the distinguished canonical form. Musta{\c{t}}{\u{a}} and Nicaise \cite{MustataNicaise} extend their definition to any variety with non-negative Kodaira dimension. The key technical tool is the definition, for a rational pluricanonical form, of a weight function on the Berkovich space. The essential skeleton $\Sk(X)$ is the union over all regular pluricanonical forms of the minimality locus of the associated weight functions.

Thus the essential skeleton has the advantage of being intrinsic to the variety $X$, with no dependence on a choice of model. As the weight function is closely related to the log discrepancy from birational geometry, it is natural to expect that the essential skeleton in some way encodes some of the minimal model theory of $X$. Nicaise and Xu \cite{NicaiseXu} show, when $X$ is a smooth projective variety with $K_X$ semiample, and $\cX$ is a good dlt minimal model, that the dual complex of $\cX_k$ can be identified with the essential skeleton of $X$. While it is in general a difficult problem to produce good dlt minimal models, Koll{\'a}r, Nicaise and Xu \cite{KollarNicaiseXu} show that for any smooth projective $X$ with $K_X$ semiample, $X$ extends to a good dlt minimal model over a finite extension of the valuation ring.

\subsection{Skeletons for log-regular models.} To produce nice models of the product, we work in the context of the logarithmic geometry. To any log-regular scheme $\cX^+$, in \cite{Kato1994a} Kato attaches a combinatorial structure $F_\cX$ called a \emph{fan}: if we denote by $D_\cX$ the locus where the log structure is non-trivial, then the fan $F_\cX$ consists of the set of the generic points of intersections of irreducible components of $D_\cX$, equipped with a sheaf of monoids. We define a logarithmic version of the Berkovich skeleton for a log-regular model $\cX^+$ of $X$ over $R$: it gives rise to a polyhedral complex in $X^\an$ whose faces correspond to the points of $F_\cX$.

Given two log-smooth log schemes $\cX^+$ and $\cY^+$ over $R$, their product $\cZ^+$ in the category of fine and saturated log schemes is naturally log-regular, hence $\cZ^+$ has an associated skeleton, and it is a model of the product $\cX_K \times_K \cY_K$ of the generic fibers. If one of the two underlying schemes $\cX$ or $\cY$ is semistable, which means it has reduced special fiber, then we show that the skeleton of the product $\cZ^+$ is the product of the skeletons, with the projection maps given by restricting the valuation to the corresponding function fields (Proposition \ref{prop semistability and skeletons}).

\subsection{Skeletons for pairs.} Working in the logarithmic setting, we may also allow a non-trivial log structure over the generic fiber. Geometrically this corresponds to adding horizontal divisors to the special fiber and yields the addition of some unbounded faces to the skeleton. In \cite{GublerRabinoffWerner} Gubler, Rabinoff, and Werner construct a skeleton for strictly semistable snc models with suitable horizontal divisors. Both constructions recover the Berkovich skeleton when there is no horizontal component and the special fiber is snc.

Pairs arise frequently in the minimal model program. Taking advantage of a construction that admits horizontal components, the definition of the essential skeleton extends to the case of a pair $(X,\Delta_X)$ over $K$, where $\Delta_X$ has $\mathbb{Q}$-coefficients in $[0,1]$, the support of $\Delta_X$ is snc, and to pluricanonical forms of some positive index $r$ with divisor of poles no worse than $r\Delta_X$. We extend to pairs the result of Musta{\c{t}}{\u{a}} and Nicaise \cite{MustataNicaise} on the birational invariance of the essential skeleton (Proposition \ref{prop birational invariance essential skeleton for pairs}), as well as Nicaise and Xu's result \cite{NicaiseXu} that the essential skeleton is homeomorphic to the dual complex of a good minimal dlt model (Proposition \ref{prop dual complex minimal good dlt = essential skeleton any resolution}). It follows from these results that we can define the notion of essential skeleton for a dlt pair.

\subsection{Main Result.} Our main result establishes the behavior of essential skeletons under products.
\begin{theorem}\label{main thm intro}Assume that the residue field $k$ is algebraically closed. Let $(X,\Delta_X)$ and $(Y,\Delta_Y)$ be pairs that induce log-regular structures. Suppose that both pair have non-negative Kodaira-Iitaka dimension and admit semistable log-regular models $\cX^+$ and $\cY^+$ over $S^+$. Then, the PL homeomorphism of skeletons induces a PL homeomorphism of essential skeletons $$\Sk(Z,\Delta_Z) \xrightarrow{\sim} \Sk(X,\Delta_X) \times \Sk(Y,\Delta_Y)$$ where $Z$ and $\Delta_Z$ are the respective products. 
\end{theorem}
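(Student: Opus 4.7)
The plan is to leverage the skeleton identification from Proposition \ref{prop semistability and skeletons} and then cut out the essential skeleton on each side using weight functions. Concretely, form the log-regular product model $\cZ^+ = \cX^+ \times_{S^+} \cY^+$, which is a log-regular model of $(Z,\Delta_Z)$. Proposition \ref{prop semistability and skeletons} already gives a PL homeomorphism $\Sk(\cZ^+) \xrightarrow{\sim} \Sk(\cX^+) \times \Sk(\cY^+)$ under the semistability hypothesis. Since $\Sk(X,\Delta_X) \subseteq \Sk(\cX^+)$, $\Sk(Y,\Delta_Y) \subseteq \Sk(\cY^+)$, and $\Sk(Z,\Delta_Z) \subseteq \Sk(\cZ^+)$, it suffices to show that under this PL homeomorphism the subset $\Sk(Z,\Delta_Z)$ corresponds exactly to $\Sk(X,\Delta_X) \times \Sk(Y,\Delta_Y)$.

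The key tool will be Künneth. Because $K_Z + \Delta_Z = p_X^*(K_X+\Delta_X) + p_Y^*(K_Y+\Delta_Y)$, the space of regular pluricanonical forms on $(Z,\Delta_Z)$ of index $r$ is spanned over $K$ by pure tensors $\omega = \omega_X \boxtimes \omega_Y$ where $\omega_X$, $\omega_Y$ are regular pluricanonical forms of index $r$ on $(X,\Delta_X)$ and $(Y,\Delta_Y)$ respectively. For such a pure tensor I expect the weight function to split additively on the product skeleton, namely
\[
 w_{\omega}(v_x, v_y) \;=\; w_{\omega_X}(v_x) + w_{\omega_Y}(v_y),
\]
which is essentially the statement that the product of monomial valuations on the log-regular model factors the valuation of a tensor product. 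From this splitting the minimality locus of $w_{\omega}$ is precisely the product of the minimality loci of $w_{\omega_X}$ and $w_{\omega_Y}$. Taking unions over $\omega_X,\omega_Y$ immediately yields the inclusion $\Sk(X,\Delta_X) \times \Sk(Y,\Delta_Y) \subseteq \Sk(Z,\Delta_Z)$.

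The reverse inclusion is the main technical point. A general regular form is $\omega = \sum_i \omega_{X,i} \boxtimes \omega_{Y,i}$, and on the skeleton I would argue that the weight function behaves like the minimum of monomial valuations, i.e.\ $w_\omega = \min_i w_{\omega_{X,i} \boxtimes \omega_{Y,i}}$ as PL functions on $\Sk(\cX^+)\times\Sk(\cY^+)$. Granting this, any point $v$ in the minimality locus of $w_\omega$ realizes $w_\omega(v) = w_{\omega_{X,i_0} \boxtimes \omega_{Y,i_0}}(v)$ for some index $i_0$ achieving the global minimum, forcing $v \in M_{\omega_{X,i_0}} \times M_{\omega_{Y,i_0}} \subseteq \Sk(X,\Delta_X)\times \Sk(Y,\Delta_Y)$. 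The hard part is precisely this last identity on the skeleton: one must show that the valuation of a sum of pure-tensor pluricanonical forms agrees, along skeletal (monomial) points, with the minimum of the valuations of the summands, with no hidden cancellation. I expect this to reduce, by working in local log-regular charts, to the standard computation that a monomial valuation of a polynomial equals the minimum of its monomial valuations, combined with the non-negativity of Kodaira-Iitaka dimension to ensure sufficiently many forms are available so that the union over $\omega$ sweeps out the whole product.
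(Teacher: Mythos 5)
Your forward inclusion $\Sk(X,\Delta_X)\times\Sk(Y,\Delta_Y)\subseteq\Sk(Z,\Delta_Z)$ is exactly right and matches the paper: it follows from Theorem \ref{thm semistability and KS skeletons}, i.e.\ the additive splitting of the weight function for pure tensor forms.

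The reverse inclusion is where your proposal has a genuine gap. The Künneth decomposition $H^0(Z,m(K_Z+\Delta_Z))\simeq H^0(X,m(K_X+\Delta_X))\otimes_K H^0(Y,m(K_Y+\Delta_Y))$ does let you write an arbitrary regular form as $\omega=\sum_i\omega_{X,i}\boxtimes\omega_{Y,i}$, but the identity $\weight_\omega=\min_i\weight_{\omega_{X,i}\boxtimes\omega_{Y,i}}$ that your argument hinges on is not true in general. At a skeletal point $v_{z,\varepsilon}$ the valuation of a sum of local equations satisfies only $v(\sum_i f_i)\geqslant\min_i v(f_i)$; equality requires that the leading (initial-support) terms of the $f_i$ not cancel. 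The analogy with monomial valuations of a single polynomial does not apply here: the summands $\omega_{X,i}\boxtimes\omega_{Y,i}$ are not distinct monomials in any local log-regular chart, and different summands can share identical terms in their admissible expansions at $z$ that then cancel in the sum. Such cancellation can push $\weight_\omega(v)$ strictly above $\min_i\weight_{\omega_{X,i}\boxtimes\omega_{Y,i}}(v)$, and there is no control in your proposal guaranteeing this does not happen precisely at the minimality locus. The paper sidesteps this entirely: given $v_{z,\varepsilon}\in\Sk(Z,\Delta_Z,\omega)$, it restricts $\omega$ along a slice $i\colon\cX^+\hookrightarrow\cZ^+$ obtained by lifting (via Hensel's lemma and semistability) a sufficiently general $k$-rational point $p$ of a component $E\subseteq\cY_k$ through $y$; the restriction $i^*\omega$ is a nonzero $\Delta_X$-logarithmic form with $D_{\min}(\cX,i^*\omega)=i^*D_{\min}(\cZ,\omega)$ not containing $x$, whence $v_{x,\alpha}\in\Sk(X,\Delta_X,i^*\omega)$ by Lemma \ref{lemma characterization minimal points}. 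This restriction trick avoids decomposing $\omega$ at all. Note the paper's own remark after Theorem \ref{thm essential skeleton of product via log-reg}: when $K_X+\Delta_X$ and $K_Y+\Delta_Y$ are semiample one can take the union over pure tensors from a base-point-free generating set, and then your route does close; but under the weaker hypothesis of non-negative Kodaira--Iitaka dimension that reduction is not available, which is exactly why the slice argument is needed.
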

Semistability is a key assumption; without it the projection map might fail to be injective, see Example \ref{quartic}. As expected, we get a corresponding result for dual complexes of semistable good dlt minimal models (Theorem \ref{thm product of dual complex of good min dlt models}). Unfortunately semistability is not well behaved under birational transformations so it seems possible that a degeneration admits a semistable good dlt minimal model but no semistable log-regular model.

\subsection{Application to degenerations of hyper-K{\"a}hler varieties.}

As an application of Theorem \ref{main thm intro}, we study certain degenerations of hyper-K{\"a}hler varieties. One way to produce hyper-K{\"a}hler varieties is by taking the Hilbert scheme of points on a K3 surface. Another is to extend the Kummer construction to higher dimensional abelian varieties. Aside from two other examples found by O'Grady in dimensions 6 \cite{OGrady} and 10 \cite{OGradya} there are no other known examples, up to deformation equivalence.

Just as for Calabi-Yau varieties, degenerations of hyper-K{\"a}hler varieties can be understood in terms of the monodromy operator on cohomology, with classification into types I, II, III. Type I is the case where the dual complex is just a single point, but types II and III have more interesting structure. Koll{\'a}r, Laza, Sacc{\`a} and Voisin \cite{KollarLazaSaccaEtAl2017} show that in the type II case the dual complex has the rational homology type of a point, and in the type III case of a complex projective space. Gulbrandsen, Halle, and Hulek  \cite{GulbrandsenHalleHulek, GulbrandsenHalleHulek2016, GulbrandsenHalleHulekEtAl} use GIT to construct a model of the degeneration of $n$-th order Hilbert schemes arising from some type II degenerations of K3 surfaces, and show that the dual complex is an $n$-simplex. There are considerations from mirror symmetry that suggest that for a type III degeneration the dual complex should be homeomorphic to $\mathbb{C}\mathbb{P}^n$ \cite{Hwang, KontsevichSoibelman}.

\begin{theorem}
Assume that the residue field $k$ is algebraically closed. Let $S$ be a $\text{K}3$ surface over $K$. If $S$ admits a semistable log-regular model or a semistable good dlt minimal model, then the essential skeleton of the Hilbert scheme of $n$ points on $S$ is isomorphic to the $n$-th symmetric product of the essential skeleton of $S$ $$\Sk(\Hilb^n(S)) \xrightarrow{\sim} \Sym^n(\Sk(S)).$$ 
\end{theorem}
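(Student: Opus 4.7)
The plan is to combine the main theorem---applied iteratively to the $n$-fold self-product $S^n$---with the birational invariance of the essential skeleton and a descent argument along the $\mathfrak{S}_n$-quotient $S^n \to \Sym^n(S)$.

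First, since $S$ is a K3 surface (so $\kappa(S)=0$) admitting a semistable log-regular or semistable good dlt minimal model, I would apply Theorem~\ref{main thm intro} inductively along the factorization $S^n \cong S^{n-1}\times_K S$ to obtain a PL homeomorphism $\Sk(S^n) \xrightarrow{\sim} \Sk(S)^n$. The natural top form on $S^n$, namely the tensor power of the holomorphic symplectic form on $S$, is invariant under the $\mathfrak{S}_n$-action permuting factors, and the PL identification intertwines the corresponding action on $\Sk(S^n)$ with the permutation action on $\Sk(S)^n$.

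Second, since the Hilbert--Chow morphism $\Hilb^n(S) \to \Sym^n(S)$ is a crepant resolution of singularities, the birational invariance of the essential skeleton for log Calabi--Yau pairs (Proposition~\ref{prop birational invariance essential skeleton for pairs}, applied via a common log resolution of the two sides) yields a PL homeomorphism $\Sk(\Hilb^n(S)) \simeq \Sk(\Sym^n(S))$.

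The main obstacle is the remaining descent step $\Sk(\Sym^n(S)) \simeq \Sk(S^n)/\mathfrak{S}_n$: the symmetric product does not inherit a semistable log-regular model from a product of models of $S$, so the main theorem does not apply to $\Sym^n(S)$ directly. The idea is that the quotient $S^n \to \Sym^n(S)$ is a finite tamely ramified $\mathfrak{S}_n$-cover, étale in codimension one, hence quasi-étale; the induced map of Berkovich spaces realizes $\Sym^n(S)^\an$ as the topological quotient of $(S^n)^\an$ by $\mathfrak{S}_n$. A pluricanonical form $\omega$ on $\Sym^n(S)$ pulls back to a $\mathfrak{S}_n$-invariant form on $S^n$; because the cover is quasi-étale, the associated weight functions agree along the quotient, so the minimality loci---and hence the essential skeletons---descend as required. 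Combining the three steps yields the desired PL homeomorphism $\Sk(\Hilb^n(S)) \simeq \Sym^n(\Sk(S))$.
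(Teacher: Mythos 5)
Your proposal follows the same three-step strategy as the paper: iterate the product theorem to identify $\Sk(S^n)$ with $\Sk(S)^n$ equivariantly, descend this identification through the $\mathfrak{S}_n$-quotient, and then transfer to $\Hilb^n(S)$ by birational invariance. The paper packages these as Proposition~\ref{prop semistability and KS symm quotient} (which combines the iterated product theorem with Proposition~\ref{prop:G inv and homeo KS of quotient}) followed by the birational invariance result, so the routes agree. The one place where your sketch is lighter than the paper is the quotient-descent step: the statement that the weight functions of a $\mathfrak{S}_n$-invariant form and of the induced form on the quotient agree is not an immediate consequence of quasi-\'etaleness over $K$, because the normalized model $\cX\to\cY$ can be ramified along divisors of the special fiber. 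The paper handles this in Lemma~\ref{lemma quotient map is log etale} and Proposition~\ref{prop:G inv form and corresponding weights} by endowing both models with the divisorial log structure from the special fiber and verifying Kato's log-\'etaleness criterion, so that the \emph{logarithmic} canonical bundles (not the plain ones) pull back to each other; this is exactly what makes $v_x(\divisor_{\cX^+}(\omega))=v_y(\divisor_{\cY^+}(\overline\omega))$ and hence $\weight_\omega(x)=\weight_{\overline\omega}(y)$. Your intuition is pointing at the right fact, but a full proof needs that model-level log-\'etaleness argument rather than only the generic quasi-\'etaleness of $S^n\to\Sym^n(S)$.
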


Computing these complexes gives a single point in the type I case, an $n$-simplex in the type II case, and $\mathbb{C}\mathbb{P}^n$ in the type III case. The same types arise in the Kummer case.

\begin{theorem}
Assume the residue field $k$ is algebraically closed. Let $A$ be an abelian surface over $K$. Suppose that $A$ admits a semistable log-regular model or a semistable good dlt minimal model. If the essential skeleton of $A$ is homeomorphic to a point, the circle $S^1$ or the torus $S^1 \times S^1$, then the essential skeleton of the $n$-th generalised Kummer variety $\text{K}_n(A)$ is isomorphic to a point, the standard $n$-simplex or $\CP^n$ respectively. 
\end{theorem}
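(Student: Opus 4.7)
The plan is to imitate the preceding K3 theorem to compute the skeleton of the Hilbert scheme, and then to extract $\text{K}_n(A)$ as the fiber of the summation map $\Sigma\colon \Hilb^{n+1}(A)\to A$, noting that $\text{K}_n(A) = \Sigma^{-1}(0)$. Iterating Theorem \ref{main thm intro} gives $\Sk(A^{n+1}) \cong \Sk(A)^{n+1}$ compatibly with the $S_{n+1}$-action; descending via log-crepant birational invariance yields $\Sk(\Sym^{n+1}(A)) \cong \Sym^{n+1}(\Sk(A))$, and the crepant Hilbert--Chow morphism then gives
\[
\Sk(\Hilb^{n+1}(A)) \;\cong\; \Sym^{n+1}(\Sk(A)),
\]
in such a way that $\Sigma$ corresponds on skeletons to the sum map $\sigma\colon \Sym^{n+1}(\Sk(A))\to \Sk(A)$.

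To isolate $\text{K}_n(A)$ at the skeleton level, I exploit the translation map $\phi\colon A\times \text{K}_n(A)\to \Hilb^{n+1}(A)$, $(a,Z)\mapsto Z+a$, which is finite étale of degree $(n+1)^4$ and fits into a Cartesian square whose right vertical map is $\Sigma$ and whose bottom horizontal map is $[n+1]\colon A\to A$. Applying Theorem \ref{main thm intro} to the left gives $\Sk(A\times \text{K}_n(A))\cong \Sk(A)\times \Sk(\text{K}_n(A))$, so we obtain the induced diagram of skeletons
\[
\begin{CD}
\Sk(A)\times \Sk(\text{K}_n(A)) @>>> \Sym^{n+1}(\Sk(A)) \\
@VVV @VV\sigma V \\
\Sk(A) @>[n+1]>> \Sk(A).
\end{CD}
\]
Once this square is shown to be Cartesian at the PL level---the key technical point being the compatibility of essential skeletons with the finite étale cover $\phi$ of $K$-trivial models---restricting the bottom edge to $0\in \Sk(A)$ identifies $\Sk(\text{K}_n(A))$ with $\sigma^{-1}(0)$.

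It remains to identify $\sigma^{-1}(0)$ in each case. When $\Sk(A)$ is a point the fiber is a point. When $\Sk(A) = S^1 = \mathbb{R}/\mathbb{Z}$, I parametrize an unordered $(n+1)$-tuple $\{x_0,\ldots,x_n\}\subset S^1$ with $\sum x_i = 0$ by its consecutive gaps $(d_0,\ldots,d_n)$ satisfying $d_i \geq 0$ and $\sum d_i = 1$; cyclic rotation of the gap sequence corresponds to change of starting point, and the sum-zero constraint rigidifies the starting point among the $(n+1)$-torsion, so that $\sigma^{-1}(0)$ is a fundamental domain for $C_{n+1}$ acting on the standard $n$-simplex, PL-homeomorphic to $\Delta^n$. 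When $\Sk(A) = T^2$ the case $n=1$ reproduces the classical Kummer identification $T^2/\{\pm 1\}\cong S^2 = \mathbb{CP}^1$; the general identification $\sigma^{-1}(0)\cong \mathbb{CP}^n$ should follow from the tropical/non-archimedean analog of the Beauville--Fourier--Mukai picture relating $\Hilb^{n+1}(A)$ to $A\times \text{K}_n(A)$, together with the known structure of type III degenerations of hyper-Kähler varieties. This last identification, alongside the PL Cartesian property of the square above, is the main obstacle of the proof.
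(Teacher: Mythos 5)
The paper takes a genuinely different, more self-contained route. Instead of passing through $\Hilb^{n+1}(A)$ and the translation cover $\phi$, it observes that $\text{K}_n(A)$ is birational to $L/\mathfrak{S}_{n+1}$ where $L = m_{n+1}^{-1}(1) \subset A^{n+1}$, applies Proposition \ref{prop:G inv and homeo KS of quotient} to obtain $\Sk(\text{K}_n(A)) \simeq \Sk(L)/\mathfrak{S}_{n+1}$, and then computes the $\mathfrak{S}_{n+1}$-action on $\Sk(L)$ directly by using the two coordinate projections $L \xrightarrow{\sim} A^n$ dropping the $n$-th and the $(n+1)$-th coordinate. These are equivariant for the subgroups $\mathfrak{S}_n''$, $\mathfrak{S}_n'$ fixing the $n$-th and $(n+1)$-th indices, are related by $(z_1,\ldots,z_n) \mapsto (z_1,\ldots,z_{n-1},\prod z_i^{-1})$, and descend to $\Sk(L)\cong\Sk(A)^n$ via the product theorem applied to $A^n$. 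This pins down the whole $\mathfrak{S}_{n+1}$-action, yielding $\Sk(\text{K}_n(A)) \cong \mu_{n+1}^{-1}(1)/\mathfrak{S}_{n+1}$ --- the same target as your $\sigma^{-1}(0)$ --- without any covering-space argument.

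Your route has substantive gaps, both of which you flag but leave open, and they are not minor. First, to apply Theorem \ref{main thm intro} to $A\times\text{K}_n(A)$ you need a semistable log-regular (or semistable good dlt minimal) model of $\text{K}_n(A)$, which the hypotheses supply only for $A$; and you then additionally need the square of skeletons to be Cartesian in the PL category, which would require an independent compatibility statement for essential skeletons under finite \'etale covers of $K$-trivial varieties. Neither is in the paper's toolkit, and the paper's argument never needs them. Second, and more seriously, the identification $\sigma^{-1}(0)\cong\CP^n$ in the torus case is precisely the crux of the type III case and is deferred to an unstated ``tropical analog of the Beauville--Fourier--Mukai picture.'' The paper supplies the missing input concretely: \cite{Looijenga}, Theorem 3.4, which identifies the quotient of the $2n$-torus $\mu_{n+1}^{-1}(1)$ by $\mathfrak{S}_{n+1}$, viewed as the Weyl group of the root system $A_n$ (whose highest root is the sum of the simple roots, all with coefficient $1$), with $\CP^n$. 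For the circle case your gap-coordinate reasoning reaches the right conclusion, though describing $\sigma^{-1}(0)$ as ``a fundamental domain for $C_{n+1}$'' is a loose way of saying the fiber of the sum map in $\Sym^{n+1}(S^1)$ is a full $n$-simplex; the paper cites \cite{Morton} for this.
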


Analysis of the weight function gives a powerful yet accessible approach to controlling the skeletons of these varieties. In both cases we use Theorem \ref{main thm intro} to establish that the skeleton of the hyper-K{\"a}hler variety is a finite quotient of the $n$-fold product of the skeleton of the original surface under the action of a symmetric group. In the case of Hilbert schemes we can get a complete description of the action using functoriality of the projection maps, but in the Kummer case we additionally need to understand the restriction of the multiplication map to the essential skeleton of an abelian surface (\cite{Berkovich1990, HalvardHalleNicaise2017, Temkina}).

To our knowledge these are the first examples of type III degenerations of hyper-K{\"a}hler varieties where the PL homeomorphism type of the dual complex is known. 

\subsection{Structure of the paper.}
In section \ref{sect kato fan} we recall the Kato fan, and develop necessary background for the rest of the paper. Section \ref{sect skeleton log} contains the definition and properties of the skeleton of a log-regular scheme. These first two sections require no hypothesis on the characteristic of $K$. 

For the remainder of the paper, we assume that $K$ has equicharacteristic $0$. In section \ref{sec weight} we introduce the weight function and essential skeleton from \cite{MustataNicaise}, extending their definition to the case of a pair. In section \ref{sec birational} we give connections to birational geometry, along with proofs of the main theorems. We discuss an application to degenerations of hyper-K{\"a}hler varieties in section \ref{sec hkahler}.

\subsection{Acknowledgements.}
The authors would like to thank Drew Armstrong, Lorenzo Fantini, Phillip Griffiths, Lars Halvard Halle, Sean Keel, Janos Koll{\'a}r, Mirko Mauri, Johannes Nicaise, Giulia Sacc{\`a}, Chenyang Xu, and David Zureick-Brown for helpful conversations, comments, and suggestions. The authors are especially grateful to Johannes Nicaise, Mazzon's advisor, both for suggesting this project and for putting the authors in contact. Mazzon is grateful to the University of Miami for hosting her visit during the completion of this project. Brown was supported by the Simons Foundation Collaboration Grant 524003.
Mazzon was partially supported by the ERC Starting Grant MOTZETA (project 306610) of the European Research Council (PI: Johannes Nicaise), and by the Engineering and Physical Sciences Research Council [EP/L015234/1]. The EPSRC Centre for Doctoral Training in Geometry and Number Theory (The London School of Geometry and Number Theory), University College London.

\subsection{Notation.}
\spa{Let $R$ be a complete discrete valuation ring with maximal ideal $\mm$, residue field $k=R/\mm$ and quotient field $K$. We assume that the valuation $v_K$ is normalized, namely $v_K(\pi)=1$ for any uniformizer $\pi$ of $R$. We define by $|\cdot|_K= \exp(- v_K(\cdot))$ the absolute value on $K$ corresponding to $v_K$; this turns $K$ into a non-archimedean complete valued field.}

\spa{ We write $S=\Spec R$ and we denote by $s$ the closed point of $S$. Let $\cX$ be an $R$-scheme of finite type. We will denote by $\cX_k$ the special fiber of $\cX$ and by $\cX_K$ the generic fiber. 
}

\spa{Let $X$ be a proper $K$-scheme. A model for $X$ over $R$ is a flat separated $R$-scheme $\cX$ of finite type endowed with an isomorphism of $K$-schemes $\cX_K \rightarrow X$. If $X$ is smooth over $K$,
we say that $\cX$ is an snc model for $X$ if it is proper, regular over $R$, and the
special fiber $\cX_k$ is a strict normal crossings divisor on $\cX$. In equicharacteristic $0$, such a model always exists, by Hironaka's resolution of singularities.

We say that a model $\cX$ over $R$ (not necessarily regular) is semistable if the special fiber $\cX_k$ is reduced.}

\spa{ \label{sss-log} All log schemes in this paper are  fine and saturated (\emph{fs}) log schemes and defined with respect to the Zariski topology. We denote a log scheme by $\cX^+=(\cX,\caM_{\cX})$, where $\caM_{\cX}$ is the structural sheaf of monoids. We denote by $$\mathcal{C}_{\cX}=\mathcal{M}_{\cX}/\mathcal{O}_{\cX}^{\times}$$the characteristic sheaf of $\cX^+$. The sheaf $\mathcal{C}_{\cX}$ is a Zariski sheaf on $\cX^+$, supported on $\cX_k$; if $\cX^+$ is log-regular, then $\caC_{\cX}$ is a constructible sheaf. For every point $x$ of $\cX_k$, we denote by $\mathcal{I}_{\cX,x}$ the ideal in $\mathcal{O}_{\cX,x}$ generated by $$\mathcal{M}_{\cX,x}\setminus \mathcal{O}_{\cX,x}^{\times}.$$

If a log scheme $\cX^+$ has divisorial log structure induced from a divisor $D$, we denote it by $\cX^+=(\cX,D)$. 

We denote by $S^+$ the scheme $S$ endowed with the standard log structure (the divisorial log structure induced by $s$), namely $S^+=(S,s)$. If an $R$-scheme $\cX$ is given, we will always denote by $\cX^+$ the log scheme over $S^+$ that we obtain by endowing $\cX$ with the divisorial log structure associated with
$\cX_k$.}

\spa{A log scheme is log-regular at a point $x$ if the following two conditions are satisfied: $\mathcal{O}_{\cX,x}/\mathcal{I}_{\cX,x}$ is a regular local ring, and $\dim \mathcal{O}_{\cX,x} = \dim \mathcal{O}_{\cX,x}/\mathcal{I}_{\cX,x}+\mathrm{rank}\,\mathcal{C}^{\gp}_{\cX,x}$. For example, a toric variety with its toric logarithmic structure is log-regular. More generally, working over perfect fields, log-regular varieties correspond to toroidal embeddings (without self-intersections).

If $\cX^+$ is a log-regular log scheme over $S^+$, then the locus where the log structure is non-trivial is a divisor that we will denote by $D_{\cX}$. Thus, the log structure on $\cX^+$ is the divisorial log structure induced by $D_{\cX}$, by \cite{Kato1994a}, Theorem 11.6.

For a more extended dissertation of the theory of log schemes we refer to \cite{Kato1989,GabberRamero, Kato1994}.
}

\spa{Let $(X,\Delta_X)$ be an snc pair where $X$ is a proper $K$-scheme and $\Delta_X$ is an effective $\Q$-divisor such that $\Delta_X= \sum a_i \Delta_{X,i}$ with $0 \leqslant a_i \leqslant 1$. A log-regular log scheme $\cX^+$ over $S^+$ is a model for $(X,\lceil \Delta_X \rceil)$ over $S^+$ if $\cX$ is a model of $X$ over $R$, the closure of any component of $\Delta_X$ in $\cX$ has non-empty intersection with $\cX_k$, and $D_\cX = \overline{\lceil \Delta_X \rceil} + \cX_{k,\redu}$. }


\spa{We denote by $(\cdot)^{\an}$ the analytification functor from the category of $K$-schemes of finite type to Berkovich's category of $K$-analytic spaces. For every $K$-scheme of finite type $X$, as a set, $X^{\an}$ consists of the pairs $x=(\xi_x,|\cdot|_x)$ where $\xi_x$ is a point of $X$ and $|\cdot|_x$ is an absolute value on the residue field $\kappa(\xi_x)$ of $X$ at $\xi_x$ extending the absolute value $|\cdot|_K$ on $K$. We endow $X^{\an}$ with the Berkovich topology, i.e. the weakest one such that \begin{itemize}
\item[(i)] the forgetful map $\phi: X^{\an} \rightarrow X$, defined as $(\xi_x,|\cdot|_x) \mapsto \xi_x$, is continuous,
\item[(ii)] for any Zariski open subset $U$ of $X$ and any regular function $f$ on $U$ the map $|f|:\phi^{-1}(U) \rightarrow \R$ defined by $|f|(\xi_x,|\cdot|_x)=|f(\xi_x)|$ is continuous.  
\end{itemize} The set $\text{Bir}(X)$ of birational points of $X^\an$ is defined as the inverse image under $\phi$ of the generic point of $X$. By definition, it is a birational invariant of $X$.}

\section{The Kato fan of a log-regular log scheme} \label{sect kato fan}

\subsection{Definition of Kato fans.}
\spa{ According to \cite{Kato1994a}, Definition 9.1, a monoidal space $(T, \caM_T)$ is a topological space $T$ endowed with a sharp sheaf of monoids $\caM_T$, where \textit{sharp} means that $\caM_{T,t}^\times = \{1\}$ for every $t \in T$. We often simply denote the monoidal space by $T$.

A morphism of monoidal spaces is a pair $(f,\varphi):(T,\caM_T) \rightarrow (T',\caM_{T'})$  such that $f:T \rightarrow T'$ is a continuous function of topological spaces and $\varphi: f^{-1}(\caM_T) \rightarrow \caM_{T'}$ is a sheaf homomorphism such that $\varphi_{t}^{-1}(\{1\})=\{1\}$ for every $t \in T$.}

\begin{example}
If $\cX^+$ is a log scheme then the Zariski topological space $\cX$ is equipped with a sheaf of sharp monoids $\caC_{\cX}$, namely the characteristic sheaf of $\cX^+$. Thus $(\cX,\caC_{\cX})$ is a monoidal space. Moreover, morphisms of log schemes
induce morphisms of characteristic sheaves, hence morphism of monoidal spaces. We therefore obtain a functor from the category of log schemes to the category of monoidal spaces.
\end{example}

\begin{example}
Given a monoid $P$, we may associate to it a monoidal space called the spectrum of $P$. As a set, $\Spec P$ is the set of all prime ideals of $P$. The topology is characterized by the basis open sets $D(f)= \{ \fp \in \Spec P | f \notin \fp \}$ for any $f \in P$. The monoidal sheaf is defined by $$\caM_{\Spec P} (D(f))= S^{-1}P / (S^{-1}P)^{\times}$$ where $S=\{f^n | n \geqslant 0\}$.
\end{example}

\spa{A monoidal space isomorphic to the monoidal space $\Spec P$ for some monoid $P$ is called an affine Kato fan. A monoidal space is called a Kato fan if it has an open covering consisting of affine Kato fans. In particular, we call a Kato fan integral, saturated, of finite type or $fs$ if it admits a cover by the spectra of monoids with the respective properties.}

\spa{A morphism of fs Kato fans $F' \rightarrow F$ is called a \emph{subdivision} if it has finite fibers and the morphism $$\Hom(\Spec \N, F') \rightarrow \Hom (\Spec \N, F)$$ is a bijection. By allowing subdivisions, a Kato fan might take the following shape.
\begin{prop} \label{kato subdivision}(\cite{Kato1994a}, Proposition 9.8)
Let $F$ be a $fs$ Kato fan. Then there is a subdivision $F' \rightarrow F$ such that $F'$ has an open cover $\{U'_i\}$ by Kato cones with $U'_i \simeq \Spec \N^{r_i}$. 
\end{prop}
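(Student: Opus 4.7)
The plan is to reduce to the affine case, invoke the classical toric resolution of singularities, and then glue. Throughout, I use the standard translation between the combinatorics of fs Kato fans and the combinatorics of rational polyhedral cones: for an fs monoid $P$, the space $\Spec P$ is in bijection with the faces of the dual cone $\sigma = \Hom(P,\R_{\geqslant 0})$ sitting inside $N_\R = \Hom(P^{\gp},\Z)\otimes_\Z\R$, with a face $\tau \preccurlyeq \sigma$ corresponding to the prime ideal $P \setminus \tau^\perp$.

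First, I would observe that it suffices to produce a subdivision locally on $F$. Cover $F$ by finitely many affine charts $\Spec P_i$ with $P_i$ fine and saturated. For each chart, the associated cone $\sigma_i \subset (N_i)_\R$ is a rational polyhedral cone. I then invoke the classical toric resolution of singularities (Kempf--Knudsen--Mumford--Saint-Donat, \emph{Toroidal Embeddings I}): every rational polyhedral cone $\sigma$ admits a finite subdivision $\{\sigma_j'\}$ into \emph{smooth} cones, i.e.\ cones each of which is generated by part of a $\Z$-basis of $N$. A smooth cone of dimension $r$ corresponds precisely to the monoid $\N^r$, so each piece of the subdivision is of the form $\Spec \N^{r_j}$.

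Next, I translate the toric subdivision back into the language of Kato fans. The collection $\{\Spec \N^{r_j}\}$ glues along common faces to an fs Kato fan $F'_i$ equipped with a natural morphism $F'_i \to \Spec P_i$. This morphism has finite fibers (the subdivision is finite) and induces a bijection on $\Hom(\Spec \N, -)$: indeed, both sides parametrize the integral points of the cone $\sigma_i$ (equivalently, the rank-one valuations/monoid homomorphisms $P_i \to \N$), and the subdivision simply redistributes these points among the smaller cones. Hence $F'_i \to \Spec P_i$ is a subdivision in the sense of the definition above.

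For the gluing step, note that if $U = \Spec P_i \cap \Spec P_j$ is an overlap, it corresponds to localizing at a face, and any subdivision of a cone $\sigma_i$ restricts to a subdivision of any face. I would thus proceed inductively: fix a subdivision on one chart, then on the next chart choose a subdivision compatible with the one already built on the overlap; this is possible since a subdivision on a face of $\sigma_j$ extends to a subdivision of $\sigma_j$ (for instance by iterated star subdivisions centered at the added rays). The resulting local pieces glue to a global fs Kato fan $F' \to F$ with the required property. The main obstacle I expect is precisely this gluing/compatibility step, since choosing local smooth subdivisions independently need not give compatible results on overlaps; the fix is to perform the KKMS procedure in a coherent way across charts (choosing a global set of rays first, then a compatible smooth completion), just as in the toric proof.
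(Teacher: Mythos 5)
The paper does not supply its own proof; it cites \cite{Kato1994a}, Proposition 9.8, and merely remarks that the argument goes back to \cite{KempfKnudsenMumfordEtAl1973} and proceeds by a sequence of \emph{barycentric} and \emph{star} subdivisions of the Kato fan (cf.\ \cite{AbramovichChenMarcusEtAl2015}, Example~4.10). Your high-level idea --- translate $\Spec P$ into the dual rational polyhedral cone, invoke the KKMS toric resolution, and translate back --- is essentially the right one, and your verification that the resulting map is a subdivision (finite fibers, bijection on $\Hom(\Spec\N,-)$, since both sides count integral points of the cone) is fine.

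The real content of the proof, however, is precisely the gluing step, and there your sketch has a gap. A Kato fan is a monoidal space which is only \emph{locally} of the form $\Spec P$; unlike a classical fan it need not embed into a single lattice $N$, and a chart $\Spec P_i$ may even be glued to itself along several faces. For this reason the fix you gesture at, ``choosing a global set of rays first, then a compatible smooth completion, just as in the toric proof,'' does not directly apply: there is no ambient $N_\R$ in which to choose those rays. Moreover, your inductive scheme (fix a subdivision on one chart, then extend compatibly chart by chart) relies on the claim that a \emph{smooth} subdivision of a face of $\sigma_j$ extends to a \emph{smooth} subdivision of $\sigma_j$; you only argue that it extends to \emph{some} subdivision (by star subdivisions at the added rays), and an arbitrary smoothing of that extension may modify the pieces on the face, breaking the compatibility you just built. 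The robust way out --- and this is what Kato's proof and the paper's remark point to --- is to avoid arbitrary local choices altogether and work with subdivisions that are \emph{canonically} defined cone-by-cone, hence automatically compatible on overlaps and under self-gluing: first a barycentric subdivision to make all cones simplicial, then a canonical sequence of star subdivisions that lowers the multiplicity (lattice index) of each simplicial cone to~$1$. If you replace your inductive gluing argument by this canonical-subdivision argument, the proof closes.
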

The strategy of the proof of Proposition \ref{kato subdivision} goes back to \cite{KempfKnudsenMumfordEtAl1973} and relies on a sequence of particular subdivisions of the Kato fan, the so-called star and barycentric subdivisions (\cite{AbramovichChenMarcusEtAl2015}, Example 4.10).}

\subsection{Kato fans associated to log-regular log schemes.}
\begin{thm}\label{kato fan associated} (\cite{Kato1994a}, Proposition 10.2)
Let $\cX^+$ be a log-regular log scheme. Then there is an initial strict morphism $(\cX, \caC_{\cX}) \rightarrow F$ to a Kato fan in the category of monoidal spaces. Explicitly, there exist a Kato fan $F$ and a morphism $\varrho: (\cX, \caC_{\cX}) \rightarrow F$ such that $\varrho^{-1}(\caM_{F}) \simeq \caC_{\cX}$ and any other morphism to a Kato fan factors through $\varrho$.
\end{thm}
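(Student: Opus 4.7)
The strategy is to construct $F$ explicitly as the subspace of $\cX$ consisting of the generic points of strata of $D_\cX$, equipped with the sheaf $\caC_\cX|_F$. Concretely, set $|F| := \{x \in \cX \mid \caI_{\cX,x} = \mm_{\cX,x}\}$; the log-regularity identity $\dim \caO_{\cX,x} = \dim \caO_{\cX,x}/\caI_{\cX,x} + \rank \caC_{\cX,x}^{\gp}$ forces $\caO_{\cX,x}/\caI_{\cX,x}$ to have Krull dimension zero at such $x$, so $|F|$ consists exactly of the generic points of intersections of components of $D_\cX$, together with the generic points of $\cX$ where the log structure is trivial. Endow $F$ with the subspace topology and with the sharp monoidal sheaf $\caM_F := \caC_\cX|_F$.

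To verify that $(F,\caM_F)$ is an fs Kato fan, I would use log-regularity at each $x \in F$ to realise $\caC_{\cX,x}$ as a sharp fs monoid $P_x$, together with Kato's local structure theorem \cite{Kato1994a}, which identifies the stratification of $\cX$ near $x$ with the face stratification of $\Spec P_x$. The specializations of $x$ inside $F$ are then in bijection with the prime ideals of $P_x$, and the stalks of $\caM_F$ along this bijection match those of the structure sheaf of $\Spec P_x$. Hence an open neighborhood of $x$ in $F$ is isomorphic, as a monoidal space, to the affine Kato fan $\Spec P_x$.

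Next, define $\varrho:(\cX,\caC_\cX) \to (F,\caM_F)$ by sending $\xi \in \cX$ to the generic point of the unique stratum of $D_\cX$ containing $\xi$; the accompanying sheaf map $\varrho^{-1}\caM_F \to \caC_\cX$ is the cospecialization, which is an isomorphism because $\caC_\cX$ is locally constant along strata by log-regularity. This shows $\varrho$ is strict. For the universal property, let $(f,\varphi):(\cX,\caC_\cX) \to (G,\caM_G)$ be any morphism to a Kato fan, fix $\xi \in \cX$, and set $x := \varrho(\xi)$. Then $f(x)$ generalizes $f(\xi)$ in $G$, so the specialization map $\caM_{G,f(x)} \to \caM_{G,f(\xi)}$ is a quotient by a face; composing it with $\varphi_\xi$ recovers $\varphi_x$ up to the isomorphism $\caC_{\cX,x} \xrightarrow{\sim} \caC_{\cX,\xi}$, and the sharpness condition $\varphi_\xi^{-1}(\{1\}) = \{1\}$ forces that quotient to be an isomorphism. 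Hence $f(x) = f(\xi)$, so $f$ factors uniquely through $\varrho$ via $g := f|_F$, and the sheaf-level factorization is then determined by strictness.

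The main obstacle is the explicit description of neighborhoods of $x$ in $F$ as affine Kato fans $\Spec P_x$; this genuinely requires the full local structure theorem for log-regular log schemes from \cite{Kato1994a}. Once this affine local picture is in place, both the strictness of $\varrho$ and the universal property reduce to monoid-theoretic observations about sharp morphisms of fs monoids.
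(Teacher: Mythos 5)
The paper does not prove this statement; it cites it as Proposition~10.2 of Kato's 1994 paper and then records only the explicit description of $F$ (the subspace of points $x$ with $\caI_{\cX,x} = \mm_x$, with $\caM_F$ the restriction of $\caC_\cX$). Your construction of $|F|$ and $\caM_F$ matches that description exactly, and your proof sketch is the natural one via Kato's local structure theorem, so you are on the same track as the cited source.

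Two small points to tighten. First, you write that ``the specializations of $x$ inside $F$ are then in bijection with the prime ideals of $P_x$''; this should be \emph{generizations}. The point $x$ itself is the closed point of the local chart, and the open neighbourhood $F(x) \cong \Spec \caC_{\cX,x}$ consists of the points $y \in F$ with $x \in \overline{\{y\}}$, exactly as the paper spells out at the start of Section~3.1. Second, in the universal property you correctly deduce from sharpness that the cospecialization $\caM_{G,f(x)} \to \caM_{G,f(\xi)}$ is injective (and hence, being a quotient by a face, an isomorphism), but you should also say a word about why that forces $f(x) = f(\xi)$: in an affine chart $\Spec Q$ of $G$ the two points are primes $\fp \subseteq \fq$, and any $a \in \fq \setminus \fp$ would become a unit (hence trivial in the sharp stalk) at $\fp$ while remaining nontrivial at $\fq$, so injectivity of the cospecialization forces $\fp = \fq$. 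With those clarifications the argument is sound and, as far as one can tell without Kato's text in hand, coincides with his.
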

The Kato fan $F$ in Theorem \ref{kato fan associated} is called the Kato fan associated to $\cX^+$; concretely, it is the topological subspace of
$\cX$ consisting of the points $x$ such that the maximal ideal $\mathfrak{m}_x$ of
$\mathcal{O}_{\cX,x}$ is equal to $\mathcal{I}_{\cX,x}$, and
$\mathcal{M}_F$ is the inverse image of $\mathcal{C}_{\cX}$ on
$F$, henceforth we write $\caC_F$ for $\caM_F$.

\begin{example} \label{example kato fan regular model}
Assume that $\cX$ is regular, of finite type over $S$ and $\cX_k$ is a divisor with strict
normal crossings. Then $\cX^+$ is log-regular and $F$ is the set
of generic points of intersections of irreducible components of
$\cX_k$. For each point $x$ of $F$, the stalk of $\mathcal{C}_F$
is isomorphic to $(\N^r,+)$, with $r$ the number of irreducible
components of $\cX_k$ that pass through $x$.
\end{example}

This example admits the following partial generalisation.

\begin{lemma} \label{lemma points Kato fan}
Let $\cX^+$ be a log-regular log scheme. Then the fan $F$ consists of the generic points of intersections of irreducible components of $D_{\cX}$.
\end{lemma}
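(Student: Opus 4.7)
The plan is to reduce the statement to the case of an affine toric variety via Kato's local structure theorem for log-regular log schemes, and then conclude using the orbit--cone correspondence.

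The starting point is the explicit description of $F$ given immediately after Theorem \ref{kato fan associated}: $F$ consists of the points $x \in \cX$ with $\mathfrak{m}_x = \mathcal{I}_{\cX,x}$, that is, $\dim \mathcal{O}_{\cX,x}/\mathcal{I}_{\cX,x} = 0$. Combined with the dimension formula in the definition of log-regularity, this is equivalent to
\[
\dim \mathcal{O}_{\cX,x} \;=\; \mathrm{rank}\,\mathcal{C}^{\gp}_{\cX,x}.
\]
The question is thus reduced to a purely local numerical check at each point. To handle it I would invoke Kato's structure theorem for log-regular log schemes: \'etale-locally around $x$, the log scheme $\cX^+$ is isomorphic to $\Spec R' \times \Spec \mathbb{Z}[P]$ for a regular local ring $R'$ and a fine, saturated, sharp monoid $P$, with the log structure pulled back from $P$. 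Under this identification, the components of $D_\cX$ passing through $x$ correspond bijectively to the rays of the cone associated to $P$, the stalk $\mathcal{C}^{\gp}_{\cX,x}$ is identified with $P^{\gp}$, and the points of $F$ in a neighborhood of $x$ correspond to the torus-invariant generic points of the toric factor.

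At this point the lemma reduces to a direct computation on an affine toric variety. By the orbit--cone correspondence, the generic point of any intersection of torus-invariant prime divisors equals the generic point of the orbit attached to the smallest face containing the corresponding rays, and conversely every torus orbit arises from such an intersection; moreover these generic points of orbits are precisely the points satisfying the numerical equality above. Translating back through the structure theorem identifies $F$ with the set of generic points of intersections of irreducible components of $D_\cX$, as claimed.

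The main obstacle I anticipate is the non-simplicial case. When $\cX$ is singular at $x$ the cone associated to $P$ need not be simplicial, so the number of components of $D_\cX$ through $x$ can strictly exceed $\mathrm{rank}\,\mathcal{C}^{\gp}_{\cX,x}$, and two distinct intersections of components can share a common generic point (for example, the apex of the cone over a square is the generic point of each pair of opposite components). One therefore cannot read the statement off from a naive count of components through $x$: the real content of the lemma is that the face lattice of the local cone, not the power set of the set of rays, controls which intersections give new generic points, and this is precisely what the local structure theorem together with the orbit--cone correspondence lets one exploit.
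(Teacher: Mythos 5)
Your approach — reduce to a local toric model and invoke the orbit–cone correspondence — is genuinely different from the paper's, which argues directly with regular sequences and a dimension count in $\mathcal{O}_{\cX,x}$ and never invokes a local structure theorem. Your toric computation itself is sound (including the observation about non-simplicial cones, which is well taken though not actually a threat to the statement: the lemma asserts only that $F$ is the \emph{set} of such generic points, so distinct intersections sharing a generic point is harmless). The gaps are in the reduction.

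First, the structure theorem you invoke is misstated. For a log-\emph{smooth} scheme over a field one has an \'etale-local factorization through a toric model, but a log-\emph{regular} scheme (which is what the lemma concerns, with no base hypothesis) admits no such \'etale-local product decomposition; what Kato proves (\cite{Kato1994a}, Theorem 3.2) is a description of the \emph{completed} local ring, $\widehat{\mathcal{O}}_{\cX,x} \cong C(k(x))\llbracket P \rrbracket \llbracket T_1,\ldots,T_r\rrbracket/(\theta)$ with $P = \mathcal{C}_{\cX,x}$. So at best the reduction runs through $\Spec \widehat{\mathcal{O}}_{\cX,x}$, not an \'etale neighbourhood of $x$ in $\cX$.

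Second, and more substantively, the reduction must identify the Zariski-local irreducible components of $D_\cX$ through $x$ with the torus-invariant prime divisors of the formal toric model. This is not automatic: a priori a component of $D_\cX$ could become reducible after completion. It is true here — because the paper works throughout with \emph{Zariski} fs log structures, so $\mathcal{C}_{\cX,x}$ is a Zariski stalk and its height-one primes really do biject with components of $D_\cX$ through $x$, with each such component formally irreducible — but this is precisely the ``toroidal without self-intersections'' content that needs to be established or cited, and you pass over it silently. Without this step the orbit–cone argument computes $F$ for the formal toric model, not for $\cX$. The paper's proof sidesteps both issues by never leaving $\mathcal{O}_{\cX,x}$: it picks a regular sequence $(f_1,\ldots,f_l)$ cutting out $V(\mathcal{I}_{\cX,x})$, shows its classes are independent in $\mathcal{C}_{\cX,x}^{\gp}$, and derives a codimension contradiction if $\mathcal{I}_{\cX,x}\subsetneq\mathfrak{m}_x$ — a more elementary route that requires neither completion nor any toric dictionary.
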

\begin{proof}
First, we show that every such generic point is a point of $F$. Let $E_1,\ldots,E_r$ be irreducible components of $D_{\cX}$ and let $x$ be a generic point of the intersection $E_1\cap\ldots\cap E_r$. We set $d=\dim \mathcal{O}_{\cX,x}$. Since $\cX^+$ is log-regular, we know that $\mathcal{O}_{\cX,x}/\mathcal{I}_{\cX,x}$ is regular and that \begin{equation} \label{equ log regularity}
d=\dim \mathcal{O}_{\cX,x}/\mathcal{I}_{\cX,x}+\mathrm{rank}\,\mathcal{C}^{\gp}_{\cX,x}.\end{equation} We denote by $V(\caI_{\cX,x})$ the vanishing locus of the ideal $\caI_{\cX,x}$ in $\cX$.
We want to prove that $\caI_{\cX,x}= \mathfrak{m}_x$. We assume the contrary, hence that $\caI_{\cX,x} \subsetneq \mathfrak{m}_x$. This assumption implies that there exists $j$ such that $V(\caI_{\cX,x}) \nsubseteq E_j$: indeed, if the vanishing locus is contained in each irreducible component $E_i$, i.e. $$V(\caI_{\cX,x}) \subseteq E_1 \cap \ldots \cap E_r \subseteq \overline{\{x\}},$$ then $\caI_{\cX,x} \supseteq \mathfrak{m}_x$. From the assumption of log-regularity it follows that the vanishing locus $V(\caI_{\cX,x})$ is a regular subscheme, and moreover that $\cX^+$ is Cohen-Macaulay by \cite{Kato1994a}, Theorem 4.1. Thus, there exists a regular sequence $(f_1,\ldots,f_l)$ in $\caI_{\cX,x}$, where $l$ is the codimension of $V(\caI_{\cX,x})$, i.e. $$ \dim \mathcal{O}_{\cX,x}/\mathcal{I}_{\cX,x} = d-l.$$ Moreover by the equality (\ref{equ log regularity}), $\mathrm{rank}\,\mathcal{C}^{\gp}_{\cX,x} = l$.
 
We claim that the residue classes of these elements $f_i$ in $\mathcal{C}_{\cX,x}^{\gp}$ are linearly independent. Assume the contrary. Then, up to renumbering the $f_i$, there exist an  integer $e$ with $1< e< l$, non-negative integers $a_1,\ldots,a_l$, not all zero, and a unit $u$ in $\mathcal{O}_{\cX,x}$ such that $$f_1^{a_1}\cdot \ldots \cdot f_{e-1}^{a_{e-1}}=u\cdot f_e^{a_e}\cdot \ldots \cdot f_l^{a_l}.$$ This contradicts the fact that $(f_1,\ldots,f_l)$ is a regular sequence in $\mathcal{I}_{\cX,x}$. Thus, the classes $\overline{f_1}, \ldots,\overline{f_l}$ are independent in $\caC_{\cX,x}^\gp$. As we also have the equality $\mathrm{rank}\,\mathcal{C}^{\gp}_{\cX,x}=l$, it follows that these classes generate $\caC_{\cX,x}^\gp \otimes_{\Z} \Q$.

Let $g_j$ be a non-zero element of the ideal $\caI_{\cX,x}$ that vanishes along $E_j$: it necessarily exists as otherwise $E_j$ is not a component of the divisor $D_{\cX}$. Then $g_j$ satisfies $$g_{j}^N = v \cdot f_1^{b_1} \cdot \ldots \cdot f_l^{b_l}$$ with $b_i \in \Z$, $v$ is a unit in $\caO_{\cX,x}$ and $N$ is a positive integer. As $g_j$ vanishes along the irreducible component $E_j$, at least one of the functions $f_1,\ldots,f_l$ has to vanish along $E_j$: assume that is $f_1$.

On the one hand, as $f_1$ is identically zero on $E_j$, the trace of $E_j$ on $V(\caI_{\cX,x})$ has at most codimension $l-1$ in $E_j$ at the point $x$. On the other hand, we assumed that $V(\caI_{\cX,x})$ is not contained in $E_j$ and it has codimension $l$ in $\caO_{\cX,x}$. Then, the trace of $E_j$ on $V(\caI_{\cX,x})$ has codimension $l$ in $E_j$ at $x$. This is a contradiction. We conclude that the ideal $\caI_{\cX,x}$ is equal to the maximal ideal $\mathfrak{m}_x$, therefore $x$ is a point of $F$.

It remains to prove the converse implication: every point $x$ of the fan $F$ is a generic point of an intersection of irreducible components of $D_{\cX}$. Let $x$ be a point of $F$: by construction of Kato fan $F$, the maximal ideal of $\mathcal{O}_{\cX,x}$ is equal to $\mathcal{I}_{\cX,x}$, thus it is generated by elements in $\mathcal{M}_{\cX,x}$. The zero locus of such an element is contained in $D_{\cX}$ by definition of the logarithmic structure on $\cX^+$. Therefore, the zero locus of a generator of $\mathfrak{m}_x$ in $\mathcal{M}_{\cX,x}$ is a union of irreducible components of the trace of $D_{\cX}$ on $\Spec \mathcal{O}_{\cX,x}$ and $x$ is a generic point of the intersection of all such irreducible components.
\end{proof}

\begin{rem}
By convention, the generic point of the empty intersection of irreducible components is the generic point of $\cX$. By definition, this point is also included in the Kato fan $F$. Thus, for example, the Kato fan associated to $S^+$ consists of two points: the generic point of $S$ that corresponds to the empty intersection, and the closed point $s$ corresponding to the unique irreducible component of the logarithmic divisorial structure.  
\end{rem}

Moreover, the example \ref{example kato fan regular model} also leads to the following characterization.

\begin{prop} \label{resolution log scheme kato fan} (\cite{GabberRamero}, Corollary 12.5.35)
Let $\cX^+$ be a log-regular log scheme over $S^+$ and $F$ its associated Kato fan. The following are equivalent: \begin{enumerate}
\item for every $x \in F$, $M_{F,x} \simeq \N^{r(x)}$,
\item the underlying scheme $\cX$ is regular.
\end{enumerate}
If this is the case, then the special fiber $\cX_k$ is a strict normal crossing divisor.
\end{prop}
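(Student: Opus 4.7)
The plan is to handle the two implications separately, and then read off the snc statement as a byproduct of the local description obtained along the way.

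For the implication $(2)\Rightarrow(1)$, fix $x\in F$. By Lemma~\ref{lemma points Kato fan}, $x$ is the generic point of an intersection $E_1\cap\dots\cap E_r$ of irreducible components of $D_\cX$; choose local equations $p_1,\dots,p_r$ at $x$. Because the log structure is divisorial, $\caM_{\cX,x}$ is generated as a monoid (up to units) by $p_1,\dots,p_r$, so $\caI_{\cX,x}=(p_1,\dots,p_r)$, and this equals $\mathfrak{m}_x$ since $x\in F$. Setting $d=\dim \caO_{\cX,x}$, log-regularity gives $d=\operatorname{rank}\caC^{\gp}_{\cX,x}$. The divisor-comparison argument already used inside the proof of Lemma~\ref{lemma points Kato fan} shows that $\overline{p_1},\dots,\overline{p_r}$ are $\Z$-linearly independent in $\caC^{\gp}_{\cX,x}$, hence $r\leqslant d$; on the other hand, regularity of $\caO_{\cX,x}$ forces any generating set of $\mathfrak{m}_x$ to have at least $d$ elements, so $r=d$. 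Consequently $(p_1,\dots,p_d)$ is a regular system of parameters, and since the $\overline{p_i}$ generate $\caC_{\cX,x}$ as a monoid and are linearly independent in $\caC^{\gp}_{\cX,x}$, they freely generate $\caC_{\cX,x}\simeq\N^d=\N^{r(x)}$.

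For $(1)\Rightarrow(2)$, I would invoke Kato's structure theorem for log-regular log schemes (\cite{Kato1994a}, Theorem~3.2), which for every $y\in\cX$ produces an isomorphism
$$\widehat{\caO}_{\cX,y}\;\simeq\;\widehat{A}_y\llbracket \caC_{\cX,y}\rrbracket,$$
with $A_y=\caO_{\cX,y}/\caI_{\cX,y}$ regular by log-regularity. If $\caC_{\cX,y}\simeq\N^{r(y)}$, the right-hand side is a formal power series ring over a regular local ring, hence regular, and so is $\caO_{\cX,y}$. It therefore suffices to upgrade the freeness hypothesis from the points of $F$ to every point of $\cX$. This is exactly what the initial retraction $\varrho\colon(\cX,\caC_\cX)\to F$ of Theorem~\ref{kato fan associated} provides: the defining property $\varrho^{-1}\caC_F\simeq\caC_\cX$ yields $\caC_{\cX,y}\simeq \caC_{F,\varrho(y)}$ for every $y\in\cX$, so freeness of the latter on $F$ immediately transfers to freeness of the former on all of $\cX$.

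For the final assertion, under either condition the local description $\widehat{\caO}_{\cX,y}\simeq\widehat{A}_y\llbracket t_1,\dots,t_{r(y)}\rrbracket$ exhibits $D_\cX$ locally as a union of coordinate hyperplanes, i.e.\ a strict normal crossings divisor; since $\cX_k$ is cut out by the pullback of a uniformizer of $R$ and is therefore a sub-divisor of $D_\cX$, it too is snc. The only genuinely non-formal step will be the propagation of freeness of $\caC$ from the combinatorial gadget $F$ to every point of $\cX$; once one accepts the strict factorization through the Kato fan, the remainder of the argument is essentially bookkeeping around the divisor-theoretic computations already present in Lemma~\ref{lemma points Kato fan}.
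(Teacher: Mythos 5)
The paper does not prove this proposition; it is stated with a citation to Gabber--Ramero, Corollary 12.5.35, so there is no in-paper argument to compare against. Your sketch, however, is a sound reconstruction of the standard proof and I do not see a genuine gap. The implication $(2)\Rightarrow(1)$ is correct: regularity makes $\caO_{\cX,x}$ a UFD, so the divisorial log structure is generated up to units by local equations of the components through $x$, the divisor-comparison argument pins down the rank, and surjectivity of $\N^r\to\caC_{\cX,x}$ together with injectivity of $\Z^r\to\caC^{\gp}_{\cX,x}$ forces $\caC_{\cX,x}\simeq\N^r$. For $(1)\Rightarrow(2)$, the observation that the strictness of the Kato morphism $\varrho$ propagates freeness of the characteristic monoid from $F$ to every point of $\cX$ is exactly the right bridge, after which Kato's structure theorem closes the argument. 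One small caveat: the statement is used in Sections 2--3 of the paper, where no equicharacteristic hypothesis is in force, and in mixed characteristic Kato's structure theorem gives $\widehat{\caO}_{\cX,y}$ as a quotient $C\llbracket \caC_{\cX,y}\rrbracket\llbracket T_1,\dots,T_r\rrbracket/(\theta)$ with $C$ a Cohen ring and $\theta\equiv p$ modulo the maximal-ideal generators, not literally a power series ring as in your display; one must add a line checking that $\theta$ is part of a regular system of parameters so the quotient remains regular. Likewise, in the final snc step it is worth saying explicitly that the pullback of a uniformizer lies in $\caM_{\cX,y}$ (since $\cX^+\to S^+$ is a log morphism), hence is a unit times a monomial in the coordinate equations, which is why $\cX_k$ is a union of the snc components of $D_\cX$.
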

\spa{The construction of the Kato fan of a log scheme defines a functor from the category of log-regular log schemes to the category of Kato fans. Indeed, given a morphism of log schemes $\cX^+ \rightarrow \cY^+$, we consider the embedding of the associated Kato fan $F_{\cX}$ in $\cX^+$ and the canonical morphism $\cY^+ \rightarrow F_{\cY}$: the composition $$F_{\cX} \hookrightarrow \cX^+ \rightarrow \cY^+ \rightarrow F_\cY$$ functorially induces a map between associated Kato fans. Moreover, this association preserves strict morphisms (\cite{Ulirsch}, Lemma 4.9).}
\subsection{Resolutions of log schemes via Kato fan subdivisions.} \label{section resolution via kato subd}
\begin{prop} (\cite{Kato1994a}, Proposition 9.9) \label{prop morphism induced by subdivision kato fan}
Let $\cX^+$ be a log-regular log scheme and let $F$ be its associated Kato fan. Let $F' \rightarrow F$ be a subdivision of fans. Then there exist a log scheme ${\cX'}^+$, a morphism of log schemes ${\cX'}^+ \rightarrow \cX^+$ and a commutative diagram
\begin{center}
$\begin{gathered}
\xymatrixcolsep{2pc} \xymatrix{
  (\cX', \caC_{\cX'}) \ar[r]^-{p} \ar[d]^{} & F' \ar[d]^{}\\
  (\cX, \caC_{\cX}) \ar[r]^-{\pi_{\cX}}   & F
}
\end{gathered}$\end{center}such that $p^{-1}(\caM_{F'}) \simeq \caC_{\cX'}$; they define a final object in the category of such diagrams and the refinement $F'\to F$ is induced by the morphism of log-regular log schemes $\cX'^+ \rightarrow \cX^+$.
\end{prop}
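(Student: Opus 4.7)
The plan is to construct $\cX'^+$ locally, verify its properties on charts, and glue. First I would work Zariski-locally on $\cX^+$ to reduce to the case where the associated Kato fan is affine, say $F = \Spec P$ for an fs monoid $P$. By the universal property of the Kato fan (Theorem \ref{kato fan associated}), the canonical morphism $\pi_\cX : \cX \to F$ factors strictly through a chart $\cX^+ \to \Spec \Z[P]^+$, where $\Spec \Z[P]^+$ carries its toric log structure. A subdivision $F' \to F$ corresponds, via the standard dictionary between Kato fans and classical fans of cones, to a proper birational toric morphism $Y'^+ \to \Spec \Z[P]^+$ obtained by subdividing the cone dual to $P$. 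I would then define
\begin{equation*}
\cX'^+ := \cX^+ \times_{\Spec \Z[P]^+} Y'^+,
\end{equation*}
where the product is taken in the category of fs log schemes, and let $\cX'^+ \to \cX^+$ be the first projection.

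The main points to verify are (a) that $\cX'^+$ is log-regular, (b) that the induced map on Kato fans is precisely $F' \to F$, and (c) the universal property. For (a), since $Y'^+ \to \Spec \Z[P]^+$ is log-étale (a subdivision of fans yields a log-étale morphism of toric log schemes), the base change $\cX'^+ \to \cX^+$ is also log-étale; log-regularity is preserved under strict base change along log-smooth morphisms, so $\cX'^+$ is log-regular (one checks this via Kato's local structure theorem, inspecting the characteristic monoid at each point). For (b), by construction the Kato fan of $Y'^+$ is $F'$, and since strict log-étale morphisms are compatible with the Kato fan functor, the Kato fan of $\cX'^+$ is $F'$ and the induced map is the given subdivision; this uses Lemma \ref{lemma points Kato fan} and the characterization of points of the Kato fan. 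For (c), given any log scheme $\cZ^+$ with a morphism to $\cX^+$ such that the induced map on Kato fans factors through $F' \to F$, one must produce a unique factorization through $\cX'^+$; the factorization on the toric side is given by the universal property of toric subdivisions (any toric morphism whose fan map factors through the subdivision lifts uniquely), and this pulls back to give the required factorization of $\cZ^+ \to \cX^+$ through the fibered product $\cX'^+$.

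To globalize, one glues the local constructions over an affine cover of $F$; the universal property in (c) ensures that the local log schemes agree canonically on overlaps, so they glue to a global log scheme $\cX'^+$ over $\cX^+$ with the required compatibilities.

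The step I expect to be the main obstacle is verifying log-regularity in (a): while the toric case is classical, one must combine it with the strict chart to control the characteristic sheaf and the residue ring on the base change, and this relies on a careful application of Kato's local structure theorem for log-regular log schemes (so that the formal completion at a point of $\cX'$ decomposes as a power series ring over a monoid algebra of the appropriate rank). A second, more formal difficulty is checking that the universal property in (c) is preserved under the fs saturation inherent in the product of fs log schemes; this requires identifying the saturated fiber product with the naive one, which holds here because $Y'^+ \to \Spec \Z[P]^+$ has saturated (in fact, integral) charts.
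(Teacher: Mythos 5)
The paper does not prove this proposition; it is cited directly from Kato (\cite{Kato1994a}, Proposition 9.9), and the construction there is essentially the one you sketch: reduce to $F=\Spec P$ affine, choose a chart $\cX^+ \to \Spec\Z[P]^+$, base change along the toric modification $Y'^+\to\Spec\Z[P]^+$ determined by the subdivision, and glue. So the overall strategy (including defining $\cX'^+$ as an fs fiber product and getting log-regularity from log-\'{e}tale base change via \cite{Kato1994a}, Theorem 8.2) is the right one and matches the reference.

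Two of your verifications are imprecise, however. In (b), the phrase ``strict log-\'{e}tale morphisms are compatible with the Kato fan functor'' does not describe any morphism in your diagram: $\cX'^+\to\cX^+$ is log-\'{e}tale but not strict, while $\cX'^+\to Y'^+$ is strict but not log-\'{e}tale. The correct route is to use strictness of $\cX'^+\to Y'^+$ to identify characteristic sheaves and then check directly that the generic points of strata of $D_{\cX'}$ lie over points of $F'$; note also that Saito's lemma (the paper's Proposition \ref{local isomo kato fan product}) requires \emph{both} factors to be log-smooth over the base, whereas $\cX^+\to\Spec\Z[P]^+$ is strict but typically not log-smooth, so it cannot be quoted verbatim here. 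In (c), the ``universal property of toric subdivisions'' governs toric morphisms, but the test object $\cZ^+$ is an arbitrary log scheme over $\cX^+$, so that universal property does not apply as you state it. What one actually uses is that the datum $p_\cZ:(\cZ,\caC_\cZ)\to F'$ together with the composite $\cZ^+\to\cX^+\to\Spec\Z[P]^+$ produces, Zariski-locally on $\cZ$, a chart valued in an affine piece $\Spec P'$ of $F'$, hence a morphism $\cZ^+\to Y'^+$ over $\Spec\Z[P]^+$, and therefore the desired map to the fs fiber product; uniqueness makes the local factorizations glue. Equivalently, one may observe (as the paper records in the remark following the proposition, citing \cite{Niziol2006}) that $\cX'^+\to\cX^+$ is a log blow-up and appeal to the universal property of log blow-ups in the fs category.
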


\spa{\label{rem resolution via subd}
It follows that given any subdivision $F' \rightarrow F$ of the Kato fan F associated with a log-regular log scheme $\cX^+$, we can construct a log scheme over $\cX^+$ with prescribed associated Kato fan $F'$. Combining this fact with Proposition \ref{kato subdivision} and Proposition \ref{resolution log scheme kato fan} yields to the construction of resolutions of log schemes in the following sense: for any log-regular log scheme over $S^+$ we can find a birational modification by a regular log scheme with strict normal crossings special fiber. Moreover, the morphism of log schemes ${\cX'}^+ \rightarrow \cX^+$ is obtained by a log blow-up  (\cite{Niziol2006}, Theorem 5.8).}

\subsection {Fibred products and associated Kato fans.}\label{sect fs product}
\spa{Given morphisms of \emph{fs} log schemes $f_1: \cX_1^+ \rightarrow \cY^+$ and $f_2: \cX_2^+ \rightarrow \cY^+$, their fibred product exists in the category of log schemes. It is obtained by endowing the usual fibred product of schemes
\begin{equation} \label{fibred product diagram}
\begin{gathered}
\xymatrixcolsep{3pc} \xymatrix{
  \cX_1\times_{\cY} \cX_2 \ar[r]^-{p_1} \ar[dr]^-{p_{\cY}} \ar[d]^-{p_2} & \cX_1 \ar[d]^{f_1}\\
  \cX_2 \ar[r]^{f_2}   & \cY
}
\end{gathered}
\end{equation}
with the log structure associated to $p_1^{-1}\caM_{\cX_1} \oplus_{p_{\cY}^{-1}\caM_{\cY}} p_2^{-1}\caM_{\cX_2}$. If $u_1:P \rightarrow Q_1$ and $u_2:P \rightarrow Q_2$ are charts for the morphisms $f_1$ and $f_2$ respectively, then the induced morphism $\cX_1\times_{\cY} \cX_2 \rightarrow \Spec \Z[Q_1 \oplus_P Q_2]$ is a chart for $\cX_1^+\times_{\cY^+} \cX_2^+$. }

\spa{In general, the fibred product is not $fs$, but the category of $fs$ log schemes also admits fibred products. Keeping the same notations, the following is a chart of the fibred product in the category of fine and saturated log schemes $$\cX_1^+\times_{\cY^+}^{\text{fs}} \cX_2^+ = (\cX_1^+\times_{\cY^+} \cX_2^+) \times_{\Z[Q_1 \oplus_P Q_2]} \Z[(Q_1 \oplus_P Q_2)^{\text{sat}}]$$(\cite{Bultot2015}, 3.6.16). We remark that the two fibred products above may not only have different log structures, but also the underlying schemes may differ. Nevertheless, this obviously does not occur when the monoid $Q_1 \oplus_P Q_2$ is saturated.}

\spa{Log-smoothness is preserved under fs base change and composition (\cite{GabberRamero}, Proposition 12.3.24). In particular, if $f_1: \cX_1^+ \rightarrow \cY^+$ is log-smooth and $\cX_2^+$ is log-regular, then $\cX_1^+\times^{\text{fs}}_{\cY^+} \cX_2^+$ is log-regular, by \cite{Kato1994a}, Theorem 8.2.

Consider log-smooth morphisms of fs log schemes $\cX_1^+ \rightarrow \cY^+$ and $\cX_2^+ \rightarrow \cY^+$. The sheaves of logarithmic differentials are related by the following isomorphism \begin{equation} \label{equ kahler diff product}
p_1^* \Omega^{\log}_{\cX_1^+/\cY^+} \oplus p_2^* \Omega^{\log}_{\cX_2^+/ \cY^+} \simeq \Omega^{\log}_{\cX_1^+\times^{\text{fs}}_{\cY^+} \cX_2^+ /\cY^+}\end{equation} by \cite{GabberRamero}, Proposition 12.3.13. Furthermore, by assumption of log-smoothness over $S^+$ the logarithmic differential sheaves are locally free of finite rank (\cite{Kato1994a}, Proposition 3.10) and we can consider their determinants; they are called log canonical bundles and denoted by $\omega^{\log}$. The following isomorphism is a direct consequence of (\ref{equ kahler diff product}) \begin{equation} \label{equ log can bundles}
p_1^* \omega^{\log}_{\cX_1^+/\cY^+} \otimes p_2^* \omega^{\log}_{\cX_2^+/ \cY^+} \simeq \omega^{\log}_{\cX_1^+\times^{\text{fs}}_{\cY^+} \cX_2^+ /\cY^+}.\end{equation} }

\spa{Similarly to the construction of fibred products of $fs$ log schemes, the category of $fs$ Kato fans admits fibred products: on affine Kato fans $F=\Spec P$ and $G=\Spec Q$ over $H=\Spec T$, $F\times_{H}G$ is the spectrum of the amalgamated sum $(P \oplus_T Q)^{\text{sat}}$ in the category of $fs$ monoids (\cite{Ulirsch2016}, Proposition 2.4) and on the underlying topological spaces, this coincides with the usual fibred product.}
We seek to compare the Kato fan associated to the fibred product of log-regular log schemes with the fibred product of associated Kato fans.
\begin{prop} \label{local isomo kato fan product}(\cite{Saito2004}, Lemma 2.8) Given $\mathscr{T}^+$ a log-regular log scheme, let $\cX^+$ and $\cY^+$ be log-smooth log schemes over $\mathscr{T}^+$. We denote by $\cZ^+$ the $fs$ fibred product $\cX^+\times^{\text{fs}}_{\mathscr{T}^+} \cY^+$. Then, the natural morphisms $F_{\cZ}\to F_{\cX}$ and $F_{\cZ}\to F_{\cY}$ induce a morphism of Kato fans \begin{equation} \label{equ local isomo kato fans}
F_{\cZ} \rightarrow F_{\cX} \times_{F_{\mathscr{T}}} F_{\cY}
\end{equation} that is locally an isomorphism.
\end{prop}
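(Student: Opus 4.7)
My plan is to prove that the morphism (\ref{equ local isomo kato fans}) is a local isomorphism by verifying, at each point $z \in F_\cZ$, the existence of a Zariski open neighborhood that maps isomorphically onto an open subset of $F_\cX \times_{F_\mathscr{T}} F_\cY$. Since the Kato fan construction is local on the log scheme and since $F_\cZ$ sits inside $\cZ$ as the set of generic points of intersections of components of $D_\cZ$ by Lemma \ref{lemma points Kato fan}, this reduces the problem to a stalk-level comparison of characteristic monoids.

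First I would pass to étale-local charts. Fix $z \in F_\cZ$ with images $x \in F_\cX$, $y \in F_\cY$ and $t \in F_\mathscr{T}$. Using log-smoothness of $\cX^+\to\mathscr{T}^+$ and $\cY^+\to\mathscr{T}^+$, I choose charts $P\to Q_1$ and $P\to Q_2$ realising the log structures in suitable neighborhoods of $x,y,t$ with $P\simeq\caC_{\mathscr{T},t}$, $Q_1\simeq\caC_{\cX,x}$ and $Q_2\simeq\caC_{\cY,y}$. By the description recalled in \ref{sect fs product}, an étale chart for $\cZ^+$ near $z$ is then furnished by the monoid $(Q_1\oplus_P Q_2)^{\text{sat}}$.

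Next I identify the relevant stalks. The points of $F_\cZ$ contained in this chart are the generic points of strata, and correspond to faces of $(Q_1\oplus_P Q_2)^{\text{sat}}$; the characteristic stalk at $z$ is the localisation of this monoid at the face cut out by $z$, modulo units. Tracking this through gives the crucial identification
\[
\caC_{\cZ, z} \;\simeq\; \bigl(\caC_{\cX,x} \oplus_{\caC_{\mathscr{T},t}} \caC_{\cY,y}\bigr)^{\text{sat}},
\]
which by the description in \ref{sect fs product} is precisely the corresponding stalk of the affine fs fibered product $\Spec Q_1 \times^{\text{fs}}_{\Spec P} \Spec Q_2 = \Spec(Q_1\oplus_P Q_2)^{\text{sat}}$. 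Comparing the two presentations exhibits a small affine neighborhood of $z$ in $F_\cZ$ isomorphic to an affine open of $F_\cX \times_{F_\mathscr{T}} F_\cY$.

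The main obstacle is the effect of saturation. The monoid $(Q_1 \oplus_P Q_2)^{\text{sat}}$ can have strictly more prime ideals than $Q_1 \oplus_P Q_2$, and correspondingly the underlying scheme of the fs fibered product can differ from $\cX \times_\mathscr{T} \cY$. As a result, a point of $F_\cX \times_{F_\mathscr{T}} F_\cY$ may have several preimages, or none, in $F_\cZ$, which is exactly why the morphism fails in general to be a global isomorphism. The delicate step is therefore a careful analysis of the faces of $(Q_1 \oplus_P Q_2)^{\text{sat}}$ to guarantee that, after shrinking to a sufficiently small chart, the comparison becomes a bijection of monoidal spaces and not merely a surjective or injective map.
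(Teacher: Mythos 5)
The paper itself offers no proof of this statement; it is cited directly from \cite{Saito2004}, Lemma 2.8. So there is no in-paper argument to compare against, and your proposal has to be assessed on its own terms.

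Your overall strategy — pass to good charts, identify the characteristic stalk of $\cZ^+$ at $z$ with the saturated amalgamated sum of the stalks at $x$, $y$ over $t$, and then read off the local isomorphism from the fact that the Kato fan is locally the spectrum of the characteristic monoid — is the right one and is essentially how Saito's lemma is proved. Two points deserve correction, though. First, you propose to use \emph{étale}-local charts, but throughout the paper log structures and Kato fans are taken in the Zariski topology (see \ref{sss-log}), and the Kato fan is a Zariski-local invariant; for a Zariski fs log scheme one can and should choose Zariski-local good charts at $x$, $y$, $t$ so that the chart monoids are literally the characteristic stalks, and then the chart of $\cZ^+$ near $z$ is $(\caC_{\cX,x}\oplus_{\caC_{\mathscr{T},t}}\caC_{\cY,y})^{\mathrm{sat}}$ (sharpened if necessary), exactly as you write. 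Second, and more substantively, your closing paragraph misidentifies where the remaining work lies. Once you have the stalk isomorphism $\caC_{(x,y)}\xrightarrow{\sim}\caC_{\cZ,z}$ induced by the natural morphism, the local isomorphism of Kato fans is automatic: the open neighborhood $F(z)$ of $z$ in $F_\cZ$ is canonically $\Spec\caC_{\cZ,z}$, the open neighborhood $F((x,y))$ of $(x,y)$ is $\Spec\caC_{(x,y)}$, morphisms of monoidal spaces preserve specialization so $F(z)$ maps into $F((x,y))$, and the induced map of affine fans is $\Spec$ of an isomorphism. There is no further ``careful analysis of faces'' to do to get the \emph{local} statement. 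The multiplicity and injectivity phenomena you worry about (several preimages of a pair $(x,y)$) are real, but they are obstructions to the map being a \emph{global} isomorphism; this is precisely what is addressed later via Lemma \ref{lemma number of preimages} and Proposition \ref{prop semistability and iso Kato fans}, and Example \ref{quartic} shows the failure can actually occur. Also, your parenthetical ``or none'' is wrong: the map is always surjective; the failure is only in injectivity.
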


\spa{For any pair of points $(x,y)$ in $F_\cX \times_{F_{\mathscr{T}}} F_\cY$, we denote by $n(x,y)$ the number of preimages of $(x,y)$ in the Kato fan of $\cZ^+$ under the local isomorphism (\ref{equ local isomo kato fans}).
\begin{lemma} \label{lemma number of preimages} If $x'$ is in the closure of $x$, and $y'$ in the closure of $y$, then $n(x',y')\geqslant n(x,y)$.
\end{lemma}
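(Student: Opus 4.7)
The plan is to build a surjective map $\phi\colon\pi^{-1}(x',y')\to\pi^{-1}(x,y)$, where $\pi$ denotes the locally isomorphic morphism of Proposition \ref{local isomo kato fan product}; this surjectivity immediately yields $n(x',y')\geqslant n(x,y)$.

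To construct $\phi$, fix $w\in\pi^{-1}(x',y')$. The local isomorphism supplies an open neighborhood $U_w$ of $w$ in $F_\cZ$ which $\pi$ maps isomorphically onto an open neighborhood $V_w$ of $(x',y')$ in $F_\cX\times_{F_\mathscr{T}}F_\cY$. Since $V_w$ is open, it contains every generization of $(x',y')$, in particular the point $(x,y)$; I set $\phi(w)$ to be the unique preimage of $(x,y)$ in $U_w$. Any two such neighborhoods agree on the minimal open around $w$, so $\phi(w)$ is characterized intrinsically as the unique generization of $w$ in $F_\cZ$ mapping to $(x,y)$.

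The crux is the surjectivity of $\phi$. Given $z\in\pi^{-1}(x,y)$, it suffices to find $w\in\pi^{-1}(x',y')$ with $z\leadsto w$ in $F_\cZ$: the point $z$ then lies in the minimal open around $w$, maps to $(x,y)$, and uniqueness forces $\phi(w)=z$. I produce $w$ via the underlying scheme morphism $\pi_{\mathrm{sch}}\colon\cZ\to\cX\times_\mathscr{T}\cY$. By the description recalled in \S\ref{sect fs product}, $\cZ$ is the base change of $\cX\times_\mathscr{T}\cY$ along $\Spec\Z[(Q_1\oplus_P Q_2)^{\mathrm{sat}}]\to\Spec\Z[Q_1\oplus_P Q_2]$; since the saturation of an fs monoid pushout is integral and finitely generated over the pushout, this base change is finite, so $\pi_{\mathrm{sch}}$ is finite and in particular closed. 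Hence the image of the closure of $\{z\}$ in $\cZ$ equals the closure of $\{(x,y)\}$ in $\cX\times_\mathscr{T}\cY$, which contains $(x',y')$; I pick $w$ in this closure with $\pi_{\mathrm{sch}}(w)=(x',y')$.

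The delicate step I expect to be the main obstacle is verifying that $w$ lies in the Kato fan $F_\cZ$, i.e., is the generic point of an intersection of components of $D_\cZ$. Because the log structure on $\cZ^+$ is generated by the pullbacks of those on $\cX^+$ and $\cY^+$, one has $D_\cZ=\pi_{\mathrm{sch}}^{-1}(D_{\cX\times_\mathscr{T}\cY})$ set-theoretically, so by Lemma \ref{lemma points Kato fan} the strata of $\cZ$ are precisely the irreducible components of preimages of strata of $\cX\times_\mathscr{T}\cY$. Writing $S'$ for the stratum with generic point $(x',y')$, finiteness of $\pi_{\mathrm{sch}}$ gives $\dim\overline{\{w\}}=\dim\overline{S'}$, so $\overline{\{w\}}$ is an irreducible component of $\pi_{\mathrm{sch}}^{-1}(\overline{S'})$ dominating $\overline{S'}$; the previous identification then shows $w\in F_\cZ$. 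Consequently $w\in\overline{\{z\}}^{\cZ}\cap F_\cZ=\overline{\{z\}}^{F_\cZ}$, and the local isomorphism at $w$ gives $\phi(w)=z$, completing the argument.
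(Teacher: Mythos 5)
Your proof correctly identifies the crux of the matter: one has to show that specializations in $F_\cX \times_{F_\mathscr{T}} F_\cY$ lift along the local isomorphism $\pi$, which is precisely the surjectivity of your map $\phi$. The paper's own proof constructs the same map $\phi$ and then simply asserts the inequality, so your instinct to make the lifting of specializations explicit is sound and goes further than the paper does.

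However, the surjectivity argument has a genuine gap, coming from a conflation of the Kato fan fibre product $F_\cX \times_{F_\mathscr{T}} F_\cY$ with the scheme fibre product $\cX\times_\mathscr{T}\cY$. The points $(x,y)$ and $(x',y')$ live in the former, not the latter: a point of $\cX\times_\mathscr{T}\cY$ projecting to $x\in\cX$ and $y\in\cY$ carries the additional datum of a prime of $\kappa(x)\otimes_{\kappa(t)}\kappa(y)$. What closedness of $\pi_{\mathrm{sch}}$ actually yields is $\pi_{\mathrm{sch}}\bigl(\overline{\{z\}}\bigr)=\overline{\{\pi_{\mathrm{sch}}(z)\}}$, the closure of one particular scheme point $p=\pi_{\mathrm{sch}}(z)$ lying over $(x,y)$; this is a single irreducible component of $\overline{\{x\}}\times_{\overline{\{t\}}}\overline{\{y\}}$, and it is not automatic that this component contains a point projecting to $x'$ in $\cX$ and $y'$ in $\cY$. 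So the sentence ``the image of the closure of $\{z\}$ equals the closure of $\{(x,y)\}$, which contains $(x',y')$'' does not follow from closedness of $\pi_{\mathrm{sch}}$ alone; you would need a separate argument that $\overline{\{p\}}$ dominates $\overline{\{x\}}\times\overline{\{y\}}$. The final step, where you check $w\in F_\cZ$ by identifying strata of $\cZ$ with irreducible components of preimages of ``strata of $\cX\times_\mathscr{T}\cY$,'' inherits the same confusion (there is in general more than one scheme point over $(x',y')$, and $\cX\times_\mathscr{T}\cY$ with the log fibre-product structure need not be log-regular, so its ``strata'' are not governed by Lemma~\ref{lemma points Kato fan}); moreover the claimed identification is asserted rather than established. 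In short, the strategy is the right one, but the key lifting step is not yet proved.
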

\begin{proof}
Let $z'$ be a preimage of the pair $(x',y')$. By Proposition \ref{local isomo kato fan product}, there exists an open neighbourhood $U_{z'}$ of $z'$ such that the restriction of $F_{\cZ} \rightarrow F_{\cX} \times_{F_\mathscr{T}} F_{\cY}$ to $U_{z'}$ is an isomorphism onto its image. In particular, $(x,y)$ lies in this image. Thus, there exists a unique preimage of $(x,y)$ that is contained in $U_{z'}$. It follows that $n(x',y')\geqslant n(x,y)$.
\end{proof}
}

\subsection{Semistability and Kato fans associated to the fibred products.} 
\spa{We recall that a log-regular log scheme $\cX^+$ is said to be semistable if the special fiber is reduced. We will see that semistability is a sufficient condition to establish injectivitiy of the local isomorphism (\ref{equ local isomo kato fans}). }

\spa{We recall that, locally around a point $x$ of $\cX_k$, the morphism of characteristic monoids $\N \rightarrow \caC_{\cX,x}$ is a saturated morphism if, for any morphism $u: \N \rightarrow P$ of $fs$ monoids, the amalgamated sum $\caC_{\cX,x} \oplus_{\N} P$ is still saturated.

Following the work by T. Tsuji in an unpublished 1997 preprint, Vidal in \cite{Vidal} defines the saturation index of a morphism of \emph{fs} monoids. In the case of log-regular log scheme over $S^+$ it can be easily computed at point in the special fiber: it is the least common multiple of the multiplicities of the prime components of $\cX_k$. The following criterion holds.
\begin{lemma} \label{lemma criterion saturation}(\cite{Vidal}, Section 1.3)
A morphism of fs monoids is saturated if and only if the saturation index is equal to 1.
\end{lemma}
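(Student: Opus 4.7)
This criterion is stated in Vidal \cite{Vidal}, building on Tsuji's unpublished 1997 preprint, so my plan is to outline how the equivalence is obtained from Vidal's framework rather than reprove it from scratch. Let $u \colon P \to Q$ be a morphism of fs monoids. The first step is to reduce to the case where $u$ is integral and both monoids are sharp: if $u$ fails to be integral the saturation index is by convention not equal to $1$ and $u$ is not saturated, so there is nothing to prove; passing to the sharp quotients affects neither property.

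Next I would recall Vidal's definition of the saturation index $e(u)$. A sharp fs monoid $Q$ is recovered as the intersection of finitely many discrete valuations on $Q^{\gp}$ corresponding to its extremal rays. For each such valuation $v$ on $Q$, its restriction to $P$ is (since $u$ is integral) an integer multiple of an extremal valuation of $P$; the resulting positive integer $m_v$ is the multiplicity of $u$ along $v$, and $e(u)$ is the least common multiple of all the $m_v$. In the example $\N \to \caC_{\cX,x}$ attached to a point in the special fiber of a log-regular model over $S^+$ this description specializes to the lcm of the multiplicities of the prime components of $\cX_k$ passing through $x$, as asserted earlier.

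For the forward direction, if $u$ is saturated I would test saturation against each morphism $P \to \N$ picking out an extremal ray of $Q$; the failure of the integral pushout $Q \oplus_P \N$ to be saturated is measured precisely by the torsion in the cokernel produced by the multiplicity $m_v > 1$ along that ray, so saturated forces every $m_v = 1$, i.e. $e(u) = 1$. Conversely, if $e(u) = 1$ I would verify saturation of the integral pushout $Q \oplus_P^{\mathrm{int}} P'$ for an arbitrary fs monoid $P'$ by a valuative check on $(Q^{\gp} \oplus_{P^{\gp}} P'^{\gp})$: any element of the saturation is certified by valuations coming from extremal rays of $Q$ and $P'$, and the condition $m_v = 1$ for all $v$ ensures that no extra torsion is introduced under the pushout. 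The main technical obstacle is aligning Vidal's intrinsic definition of $e(u)$ with this valuation-multiplicity description; once that dictionary is in place the equivalence follows from a direct computation in the category of fs monoids.
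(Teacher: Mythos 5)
The paper does not prove this lemma at all: it is quoted verbatim from Vidal (Section 1.3), who in turn relies on Tsuji's theorem, so there is no in-paper argument to compare yours against. Your decision to outline Vidal's framework rather than reprove it is therefore consistent with how the paper treats the statement, and your gloss captures the right picture: the saturation index is governed by the multiplicities of the induced map along the height-one primes (extremal rays) of the target, which in the paper's only use case $\N \to \caC_{\cX,x}$ specializes to the multiplicities of the components of $\cX_k$.

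That said, read as a proof your text has two substantive gaps, both located exactly where the real content of Tsuji's theorem lives. First, the ``dictionary'' identifying Vidal's intrinsic saturation index with the lcm of the multiplicities $m_v$ is not established; you name it as the main obstacle and then do not cross it. Second, and more seriously, the converse direction is asserted rather than argued: the claim that saturatedness of $Q \oplus_P^{\mathrm{int}} P'$ for \emph{arbitrary} fs $P'$ can be certified by a ``valuative check'' using only the extremal rays of $Q$ and $P'$ is precisely the non-trivial implication of Tsuji's criterion (in the standard formulation one reduces to testing against the multiplication-by-$n$ endomorphisms of $P$, and even that reduction requires proof). Your forward direction is sound in spirit --- pushing out along the composite $P \to Q \xrightarrow{v} \N$ for a ray with $m_v > 1$ does produce a non-saturated monoid, as one checks already for $\N \xrightarrow{\times 2} \N$ pushed out along itself --- but as written you conflate valuations $Q \to \N$ with test morphisms out of $P$, and the claim that these particular tests detect all failures of saturation is again the theorem, not an observation. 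For the purposes of this paper a citation suffices, but if you intend the outline to stand as a proof, these two steps must be supplied.
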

}

\begin{prop} \label{prop semistability and iso Kato fans}
Assume that the residue field $k$ is algebraically closed. Let $\cX^+$ and $\cY^+$ be log-smooth log scheme over $S^+$. Let $\cZ^+$ be their $fs$ fibred product. If $\cX^+$ is semistable, then for any pair of points $(x,y)$ in $F_\cX \times_{F_S} F_\cY$ whose closures intersect the special fibers $\cX_k$ and $\cY_k$ respectively, the morphism $F_{\cZ}\xrightarrow{} F_{\cX} \times_{F_S} F_{\cY},$ induced by the projections $\cZ^+ \rightarrow \cX^+$ and $\cZ^+ \rightarrow \cY^+$, is a bijection above the pair $(x,y)$, namely $n(x,y)=1$.
\end{prop}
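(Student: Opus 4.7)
The plan is to identify $n(x,y)$ with a monoidal-spectrum fiber and to exploit saturatedness of the structure morphism of $\cX^+$. First I would fix a preimage $z \in F_\cZ$ of the pair $(x,y)$ and pass to the local picture at $z$. By the construction of the fs fibred product recalled in Section \ref{sect fs product}, a chart for $\cZ^+$ around $z$ is obtained from the saturation of the amalgamated sum, so the stalk of the characteristic sheaf at $z$ is $\caC_{\cZ,z} \simeq (\caC_{\cX,x} \oplus_\N \caC_{\cY,y})^{\text{sat}}$. Under the local isomorphism of Proposition \ref{local isomo kato fan product}, the points of $F_\cZ$ lying above $(x,y)$ correspond to the points of $\Spec (\caC_{\cX,x} \oplus_\N \caC_{\cY,y})^{\text{sat}}$ mapping to the maximal ideal of $\caC_{\cX,x} \oplus_\N \caC_{\cY,y}$, so $n(x,y)$ computes precisely the cardinality of this fiber.

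Next I would apply the semistability assumption to the structure morphism $\N \to \caC_{\cX,x}$ induced by $\pi: \cX^+ \to S^+$. Because the closure of $x$ meets $\cX_k$, the saturation index of this morphism is well-defined and equals the least common multiple of the multiplicities of the prime components of $\cX_k$ passing through the stratum of $x$. Semistability of $\cX^+$ forces all these multiplicities to be $1$, so the saturation index is $1$, and Lemma \ref{lemma criterion saturation} yields that $\N \to \caC_{\cX,x}$ is a saturated morphism of fs monoids.

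Finally, I would invoke the defining property of saturated morphisms: for any morphism of fs monoids $\N \to \caC_{\cY,y}$, the amalgamated sum $\caC_{\cX,x} \oplus_\N \caC_{\cY,y}$ is itself already saturated. Hence the canonical inclusion into its saturation is an isomorphism, and the spectrum-level fiber from the first step collapses to a single point, giving $n(x,y)=1$. The main obstacle is a careful bookkeeping of the topological space underlying the fs fibred product: one needs to verify that the saturation map on monoid spectra faithfully controls the full set of preimages in $F_\cZ$, which is where the hypothesis that $k$ is algebraically closed enters — it rules out residual extension phenomena (roots of unity adjoined during saturation) that could otherwise split points above $(x,y)$.
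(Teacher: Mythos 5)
There is a genuine gap in your argument, and it lies in the central step where you claim $n(x,y)$ is computed by the monoidal spectrum.

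You write that the preimages of $(x,y)$ in $F_\cZ$ ``correspond to the points of $\Spec(\caC_{\cX,x}\oplus_\N\caC_{\cY,y})^{\text{sat}}$ mapping to the maximal ideal''; but $F_\cX\times_{F_S}F_\cY$ is, locally around $(x,y)$, exactly $\Spec(\caC_{\cX,x}\oplus_\N\caC_{\cY,y})^{\text{sat}}$, and there is only one point with the given maximal ideal in that spectrum --- namely $(x,y)$ itself --- regardless of saturation. Proposition \ref{local isomo kato fan product} asserts only that $F_\cZ\to F_\cX\times_{F_S}F_\cY$ is a \emph{local} isomorphism with finite fibers; it does not say the fiber over $(x,y)$ is a singleton, and the number of preimages is a scheme-theoretic question about $\cZ$, not a question answered by the monoid spectrum. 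Concretely, several points $z\in F_\cZ$ can share isomorphic characteristic monoids yet be distinct scheme points: they differ by a choice of prime in the residue-field tensor product. Your concluding sentence gestures at this (``residual extension phenomena'') but attributes it to ``roots of unity adjoined during saturation'', which is a separate mechanism entirely and is already killed by semistability.

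What the paper does with the saturation hypothesis is different from what you do: saturatedness of $\N\to\caC_{\cX,x}$ implies the \emph{underlying scheme} of the fs product $\cZ^+$ agrees with the ordinary schematic fibred product $\cX\times_R\cY$. This makes the points of $\cZ_k$ lying over $(x,y,s)$ explicit: they are parametrized by primes $\fp$ of $\kappa(x)\otimes_k\kappa(y)$, with $\caO_{\cZ,z}=(\caO_{\cX,x}\otimes_R\caO_{\cY,y})_\fp$. Log-regularity of $\cZ^+$ then forces $z\in F_\cZ$ precisely when $\dim\caO_{\cZ,z}=\rank\caC_{\cZ,z}^{\gp}$, and a dimension count shows this happens exactly when $\fp$ is a \emph{minimal} prime. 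This is where algebraic closedness of $k$ enters: $\kappa(x)\otimes_k\kappa(y)$ is then an integral domain, so it has a unique minimal prime and $n(x,y)=1$. Finally, the paper passes from points of $F_\cX\times_{F_S}F_\cY$ in the special fiber to points whose closures only \emph{meet} the special fiber via Lemma \ref{lemma number of preimages}; your proposal omits this reduction as well. To repair your argument you would need, at minimum, to replace the monoid-spectrum count with the analysis of minimal primes of the residue-field tensor product, and then add the specialization step.
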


\begin{proof}
By hypothesis $\cX^+$ is a semistable log-regular log scheme over $S^+$, hence the morphism $\N \rightarrow \caC_{\cX,x}$ has saturation index $1$ around any point $x \in F_\cX \cap \cX_k$, so by Lemma \ref{lemma criterion saturation} it is a saturated morphism of characteristic monoids. The saturation condition implies that the fibred product in the category of log schemes coincides with the fibred product in the category of $fs$ log schemes. In particular, the underlying scheme of the special fiber of $\cZ^+$ coincides with the usual schematic fibred product, hence its points are characterized as follows:$$z=(x,y,s,\fp) \text{ and } \caO_{\cZ,z}=(\caO_{\cX,x} \otimes_R \caO_{\cY,y})_{\fp}$$ where $x$ and $y$ are points of $\cX^+$ and $\cY^+$ both mapped to $s$, while $\fp$ is a prime ideal of the tensor product of residue fields $\kappa(x) \otimes_{k} \kappa(y)$. We look for a characterization of points $z$ in $\cZ^+$ that lie in the Kato fan $F_{\cZ}$.

If the point $z$ lies in $F_{\cZ}$, then the maximal ideal $\mathfrak{m}_z$ is equal to the ideal $\caI_{\cZ,z}$ by definition. By the flatness of the models $\cX^+$ and $\cY^+$ over $S^+$, the morphisms of local rings $\caO_{\cX,x} \rightarrow \caO_{\cZ,z}$ and $\caO_{\cY,y} \rightarrow \caO_{\cZ,z}$ are injective. Hence, the equalities $\mathfrak{m}_x= \caI_{\cX,x}$ and $\mathfrak{m}_y= \caI_{\cY,y}$ hold. Thus, the points $z$ in $\cZ^+$ that lie in the Kato fan $F_{\cZ}$ are necessarily points such that the projections $x$ and $y$ to $\cX^+$ and $\cY^+$ lie in the respective associated Kato fans. Therefore, we may assume $x \in F_{\cX}$, $y \in F_{\cY}$, and it remains to characterize the prime ideals $\fp$ such that $z=(x,y,s,\fp) \in F_{\cZ}$.

By log-regularity of $\cZ^+$, the point $z$ lies in the associated Kato fan if and only if $\dim \caO_{\cZ,z}= \rank \caC_{\cZ,z}^{\text{gp}}$. At the level of characteristic sheaves it holds that $$\rank \caC_{\cZ,z}^{\text{gp}}= \rank \caC_{\cX,x}^{\text{gp}} + \rank \caC_{\cY,y}^{\text{gp}} - 1.$$ Since $x$ and $y$ are both assumed to be points in the associated Kato fans, the equality between dimension of local rings and rank of the groupifications of characteristic sheaves lead to the equivalence \begin{align*}
z \in F_{\cZ}  \Leftrightarrow \dim \caO_{\cZ,z} & = \rank \caC_{\cZ,z}^{\text{gp}} = \rank \caC_{\cX,x}^{\text{gp}} + \rank \caC_{\cY,y}^{\text{gp}} - 1 \\
& = \dim \caO_{\cX,x} +  \dim \caO_{\cY,y} -1.
\end{align*}
By log-regularity of $\cZ^+$, it holds that $\dim \caO_{\cZ,z} \geqslant \rank \caC_{\cZ,z}^{\text{gp}}$, thus the inequality $$\dim \caO_{\cZ,z} \geqslant \dim \caO_{\cX,x} +  \dim \caO_{\cY,y} -1$$ is always true and equality holds only for minimal prime ideals $\fp$ of $\kappa(x) \otimes_{k} \kappa(y)$. Therefore, in order to determine the number $n(x,y)$ of preimages of $(x,y)$ in $F_\cZ$, we need to study the number of minimal prime ideals of the tensor product $\kappa(x) \otimes_{k} \kappa(y)$. By assumption, the residue field $k$ is algebraically closed field. It follows that the tensor product $\kappa(x) \otimes_{k} \kappa(y)$ is a domanin, hence it has a unique minimal prime ideal, namely $0$. We obtain that $n(x,y)=1$ for any pair of points of $F_\cX \times_{F_S} F_\cY$ that lie in the special fibers.

Let $(x,y)$ be a pair of points in $F_\cX \times_{F_S} F_\cY$ whose closures intersect the special fibers, namely there exist $x' \in F_\cX \cap \cX_k$ and $y' \in F_\cY \cap \cY_k$ such that $x'$ is in the closure of $x$ and $y'$ in the closure of $y$. Then, by the previous part of the proof and by Lemma \ref{lemma number of preimages}, we have $n(x,y) \leqslant n(x',y') =1$. 
\end{proof}

\section{The skeleton of a log-regular log scheme} \label{sect skeleton log}
\subsection{Construction of the skeleton of a log-regular log scheme.}

\spa{Let $\cX^+$ be a log-regular log scheme over $S^+$. Let $x$ be a point of the associated Kato fan $F$. Denote by $F(x)$ the set of points
$y$ of $F$ such that $x$ lies in the closure of $\{y\}$, and by
$\mathcal{C}_{F(x)}$ the restriction of $\mathcal{C}_F$ to $F(x)$.
Denote
 by $\Spec \mathcal{C}_{\cX,x}$ the spectrum of the monoid
$\mathcal{C}_{\cX,x}=\mathcal{C}_{F,x}$. Then there exists a
canonical isomorphism of
 monoid spaces
$$(F(x),\mathcal{C}_{F(x)})\to \Spec \mathcal{C}_{\cX,x}:y\mapsto \{s\in \mathcal{C}_{\cX,x}\,|\,s(y)= 0\}$$ where the expression $s(y)= 0$ means that $s'(y)= 0$ for any representative $s'$ of $s$ in $\mathcal{M}_{\cX,x}$. In particular, we obtain a bijective correspondence between the faces of the monoid $\mathcal{C}_{\cX,x}$ and the points of $F(x)$, and for every point $y$ of $F(x)$, a surjective cospecialization morphism of monoids $$\tau_{x,y}:\mathcal{C}_{\cX,x}\to \mathcal{C}_{\cX,y}$$ which induces an isomorphism of monoids $$S^{-1}\mathcal{C}_{\cX,x}/(S^{-1}\mathcal{C}_{\cX,x})^{\times}\cong \mathcal{C}_{\cX,x}/S \xrightarrow{\sim}
 \mathcal{C}_{\cX,y}$$ where $S$ denotes the monoid of elements $s$ in $\mathcal{C}_{\cX,x}$ such that $s(y)\neq 0$.}

\spa{For each point $x$ in $F$, we denote by $\sigma_x$ the set of morphisms of monoids $$\alpha:\mathcal{C}_{\cX,x}\to (\R_{\geqslant 0},+)$$ such that $\alpha(\pi)=1$ for every uniformizer $\pi$ in $R$. We endow $\sigma_x$ with the topology of pointwise covergence, where $\R_{\geqslant 0}$ carries the usual Euclidean topology. Note that $\sigma_x$ is a polyhedron, possibly unbounded, in the real affine space
$$\{\alpha:\mathcal{C}^{\gp}_{\cX,x}\to (\R,+)\,|\,\alpha(\pi)=1\mbox{ for every uniformizer }\pi\mbox{ in
}R\}.$$ If $y$ is a point of $F(x)$, then the surjective
cospecialization morphism $\tau_{x,y}$ induces a topological
embedding $\sigma_y\to \sigma_x$ that identifies $\sigma_y$ with a face of $\sigma_x$.}

\spa{We denote by $T$ the disjoint union of the topological
spaces $\sigma_x$ with $x$ in $F$. On the topological space $T$, we consider the equivalence relation $\sim$ generated by couples of the form $(\alpha,\alpha\circ \tau_{x,y})$ where $x$ and $y$ are points in $F$ such that $x$ lies in the closure of $\{y\}$ and $\alpha$ is a point of $\sigma_y$.

The skeleton of $\cX^+$ is defined as the quotient of the topological space $T$ by the equivalence relation $\sim$. We denote this skeleton by $\Sk(\cX^+)$. It is clear that $\Sk(\cX^+)$ has the structure of a polyhedral complex with cells $\{\sigma_x,\,x\in F\}$, so it comes equipped with a piecewise linear (PL) structure, and that the faces of a cell $\sigma_x$ are precisely the cells $\sigma_y$ with $y$ in $F(x)$. 
}

\spa{ \label{rem points in kato fan and skeleton}
We note that $\sigma_x$ is empty for any point $x$ that does not lie in the special fiber $\cX_k$: indeed, outside the special fiber any uniformizer is an invertible element, so it is trivial in $\caC_{\cX,x}$ and is mapped to $0$ by any morphism of monoids. Therefore, the construction of the skeleton associated to $\cX^+$ only concerns the points in the Kato fans $F$ that lie in the special fiber. In particular, given a generic point $x \in \cX_k$ of an intersection of components of $D_\cX$, where at least one component is not in the special fiber, the corresponding face $\sigma_x$ is unbounded.

In other words, the skeleton associated to a log-regular scheme $\cX^+$, where $D_\cX$ allows horizontal components, generalizes Berkovich's skeletons by admitting unbounded faces in the direction of the horizontal components as well as by allowing singularities in the special fiber. It also generalizes the construction performed by Gubler, Rabinoff and Werner in \cite{GublerRabinoffWerner} of a skeleton associated to a strictly semistable snc pair.
}
 
\subsection{Embedding the skeleton in the non-archimedean generic fiber.}
\spa{Let $\cX^+$ be a log-regular log scheme over $S^+$. Let $x$ be a point of the associated Kato fan $F$. As the log structure on $\cX^+$ is of finite type, the characteristic monoid $\caC_{\cX,x}$ is of finite type too, and thus $\caC_{\cX,x}^\gp$ is a free abelian group of finite rank. Hence there exists a section $$\zeta: \caM_{\cX,x}^\gp / \caM_{\cX,x}^\times \rightarrow \caM_{\cX,x}^\gp.$$ The section $\zeta$ restricts to $\caC_{\cX,x} \rightarrow \caM_{\cX,x}$; indeed, if $x \in \caM_{\cX,x}$ then $\zeta(\overline{x})-x \in \caM_{\cX,x}^\times$. Therefore we may choose a section \begin{equation}\label{eq:sect}\mathcal{C}_{\cX,x}\to
\mathcal{M}_{\cX,x}\end{equation} of the projection homomorphism
$$\mathcal{M}_{\cX,x}\to \mathcal{C}_{\cX,x}$$ and use this section to view $\mathcal{C}_{\cX,x}$ as a submonoid of $\mathcal{M}_{\cX,x}$. Note that $\mathcal{C}_{\cX,x}\setminus \{0\}$ generates the ideal $\caI_{\cX,x}$ of $\mathcal{O}_{\cX,x}$.
\\

We propose a generalisation of \cite{MustataNicaise}, Lemma 2.4.4. \begin{lemma} \label{lemma admissible expansion} Let $A$ be a Noetherian ring, let $I$ be an ideal of $A$ and let $(y_1,\ldots,y_m)$ be a system of generators for $I$. We denote by $\hat{A}$ the $I$-adic completion of $A$. Let $B$ be a subring of $A$ such that the elements $y_1,\ldots,y_m$ belong to $B$ and generate the ideal $B \cap I$ in $B$. Then, in the ring $\hat{A}$, every element $f$ of $B$ can be written as \begin{equation} \label{equ lemma admiss form}
f=\sum_{\beta \in \Z^m_{\geqslant 0}} c_\beta y^\beta
\end{equation} where the coefficients $c_\beta$ belong to $((A\setminus I) \cap B) \cup \{0\}$.
\end{lemma}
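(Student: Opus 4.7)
The plan is to construct the coefficients $c_\beta$ inductively by total degree $|\beta|$ and to verify that the partial sums converge to $f$ in the $I$-adic topology on $\hat{A}$. This generalizes \cite{MustataNicaise}, Lemma 2.4.4; the new feature is that at each stage of the induction, the residual must be kept as a $B$-linear combination of monomials in $y_1, \ldots, y_m$, rather than as an arbitrary element of $I^n$.

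More precisely, I would establish inductively that for every $n \geqslant 0$ there exist coefficients $c_\beta \in ((A \setminus I) \cap B) \cup \{0\}$ for all $|\beta| < n$, together with elements $b_\alpha^{(n)} \in B$ for $|\alpha| = n$, such that
\[
f \;=\; \sum_{|\beta| < n} c_\beta y^\beta \;+\; \sum_{|\alpha| = n} b_\alpha^{(n)} y^\alpha.
\]
The case $n = 0$ holds trivially, with $f$ itself as the unique degree-zero term. For the inductive step, I split each coefficient $b_\alpha^{(n)} \in B$ by the dichotomy $b_\alpha^{(n)} \in I$ versus $b_\alpha^{(n)} \notin I$: if $b_\alpha^{(n)} \notin I$, set $c_\alpha := b_\alpha^{(n)} \in (A \setminus I) \cap B$ and $b'_\alpha := 0$; otherwise set $c_\alpha := 0$ and $b'_\alpha := b_\alpha^{(n)} \in B \cap I$. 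In either case $b'_\alpha \in B \cap I$, so by the standing hypothesis that $y_1, \ldots, y_m$ generate $B \cap I$ as an ideal of $B$, we may write $b'_\alpha = \sum_j b_{\alpha, j} y_j$ with $b_{\alpha, j} \in B$. Substituting and regrouping the resulting sum $\sum_{\alpha, j} b_{\alpha, j} y^{\alpha + e_j}$ by its degree-$(n+1)$ monomials yields $b_\gamma^{(n+1)} := \sum_{\alpha + e_j = \gamma} b_{\alpha, j} \in B$ for every $|\gamma| = n+1$, closing the induction.

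Convergence is then immediate: the partial sum $f_n := \sum_{|\beta| < n} c_\beta y^\beta$ satisfies $f - f_n = \sum_{|\alpha| = n} b_\alpha^{(n)} y^\alpha \in I^n$, which tends to zero in the $I$-adic topology, so $f = \sum_\beta c_\beta y^\beta$ in $\hat{A}$.

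I expect the main conceptual subtlety to be the role of the hypothesis that $y_1, \ldots, y_m$ generate $B \cap I$ \emph{inside $B$}, and not merely inside $A$: this is precisely what allows the residual at each step to be written with coefficients in $B$, so that the defining dichotomy yields elements of $((A \setminus I) \cap B) \cup \{0\}$. Had we only the weaker statement that the $y_i$ generate $I$ in $A$, the residual would lift only to an $A$-linear combination and the required constraint on the $c_\beta$ could fail. The use of the dichotomy, rather than any choice of lift modulo $I$, also avoids any issue of summing distinct contributions to the same multi-index $\beta$, since each $c_\beta$ is defined from a single element of $B$ at its degree.
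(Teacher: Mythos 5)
Your proof is correct and follows essentially the same strategy as the paper's: peel off the coefficient of each multi-index degree by degree, using the hypothesis that $y_1,\ldots,y_m$ generate $B\cap I$ in $B$ to push the residual into the next degree, and conclude by $I$-adic convergence. If anything your write-up is the more careful of the two — the paper states the same induction recursively (applying the inductive hypothesis to the coefficients $b_j$ of $f=\sum b_j y_j$ and asserting the regrouped term $\sum_j b_{j,i}y_j$ is again of admissible form, which is true only up to absorbing some terms back into the residual), whereas your explicit degree-$n$ dichotomy makes the stability of the $c_\beta$ under the iteration manifest and avoids that minor bookkeeping ambiguity.
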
 
\begin{proof}
Let $f$ be an element of $B$, we construct an expansion for $f$ of the form (\ref{equ lemma admiss form}) by induction. If $f$ belongs to the complement of $I$, the conclusion trivially holds. Otherwise, $f$ belongs to $I$ and we can write $f$ as a linear combination of the elements $y_1,\ldots,y_m$ with coefficients in $B$: $$f= \sum_{j=1}^{m} b_j y_j, \quad b_j \in B.$$

By induction hypothesis, we suppose that $i$ is a positive integer and that we can write every $f$ in $B$ as a sum of an element $f_i$ of the form (\ref{equ lemma admiss form}) and a linear combination of degree $i$ monomials in the elements $y_1,\ldots, y_m$ with coefficients in $B$. We apply this assumption to the coefficients $b_j$, hence
$$b_j=b_{j,i} + \sum_{\substack{\beta \in \Z^m_{\geqslant 0} \\|\beta|=i}} b_{j,\beta} y^\beta, \quad b_{j, \beta} \in B.$$ Then we can write $f$ as a sum of an element $f_{i+1}$ of the form (\ref{equ lemma admiss form}) and a linear combination of degree $i+1$ monomials in the elements $y_1,\ldots,y_m$ with coefficients in $B$
$$ f=\underbrace{\sum_{j=1}^{m} b_{j,i} y_j}_{f_{i+1}} + \sum_{j=1}^{m}\Big( \sum_{\substack{\beta \in \Z^m_{\geqslant 0} \\|\beta|=i}} b_{j,\beta} y^\beta \Big) y_j$$ such that $f_i$ and $f_{i+1}$ have the same coefficients in degree less or equal to $i$. Iterating this construction we finally find an expansion of $f$ of the required form.
\end{proof}}

\spa{ \label{paragr admissible expansion}Let $f$ be an element of $\mathcal{O}_{\cX,x}$. Considering $A=B=\caO_{\cX,x}$, $I=\mathfrak{m}_x$ and a system of generators for $\mathfrak{m}_x$ in $\caC_{\cX,x} \setminus \{1\}$, by Lemma \ref{lemma admissible expansion} we can write $f$ as a formal power series
\begin{equation}\label{eq-adm}
f=\sum_{\gamma \in \mathcal{C}_{\cX,x}}c_{\gamma}\gamma
\end{equation}
 in
$\widehat{\mathcal{O}}_{\cX,x}$, where each coefficient $c_\gamma$
 is either zero or a unit in $\mathcal{O}_{\cX,x}$. We call this formal series an \textit{admissible expansion} of $f$. We set
 \begin{equation}\label{equ def S}
S=\{\gamma\in\mathcal{C}_{\cX,x}\,|\,c_\gamma\neq 0\}
 \end{equation} and we denote by $\Gamma_x(f)$ the set of elements of $S$ that lie on a compact face of the convex hull of $S+ \mathcal{C}_{\cX,x}$ in $\mathcal{C}^{\gp}_{\cX,x}\otimes_{\Z}\R$. We call $\Gamma_x(f)$ the {\em initial support} of $f$ at $x$, notation which is justified by the next proposition.}
\begin{prop}\label{prop-init}\item
\begin{enumerate}
\item \label{it:indep1} The element
$$f_x=\sum_{\gamma\in\Gamma_x(f)}c_\gamma(x) \gamma\quad \in k(x)[\mathcal{C}_{\cX,x}]$$ depends on the choice of the section \eqref{eq:sect}, but not on the expansion \eqref{eq-adm}. \item \label{it:indep2} The subset $\Gamma_x(f)$ of $\mathcal{C}_{\cX,x}$ only depends on $f$ and $x$, and not on the choice of the section \eqref{eq:sect} or the expansion \eqref{eq-adm}.
\end{enumerate}
\end{prop}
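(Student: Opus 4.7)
The plan is to establish (\ref{it:indep1}) first with the section fixed, and to derive (\ref{it:indep2}) via a change-of-section manipulation. The central idea is that the initial polynomial of $f$ along any compact face of the Newton polyhedron can be read off from $f$ by a monomial valuation coming from a linear form supporting the face, hence is intrinsic and independent of the chosen admissible expansion.

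For (\ref{it:indep1}), fix the section \eqref{eq:sect} and let $f=\sum c_\gamma\gamma = \sum c'_\gamma \gamma$ be two admissible expansions with supports $S$ and $S'$. Given a compact face $F$ of the Newton polyhedron $S+\caC_{\cX,x}$, I would pick a linear form $L\colon \caC^{\gp}_{\cX,x}\otimes_{\Z}\R \to \R$ that is strictly positive on $\caC_{\cX,x}\setminus\{1\}$, normalized by $L(\pi)=1$, and minimized on $S$ exactly on $F\cap S$. Such an $L$ lies in $\sigma_x$ and, as will be made precise in the remainder of the section, defines a monomial valuation $v_L$ on $\widehat{\caO}_{\cX,x}$ with the property that on any admissible expansion $g = \sum d_\gamma \gamma$ one has $v_L(g) = \min\{L(\gamma): d_\gamma \neq 0\}$, and that the associated initial residue $\sum_{\gamma : L(\gamma) = v_L(g)} d_\gamma(x)\,\gamma \in k(x)[\caC_{\cX,x}]$ is intrinsic to $g$. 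Applying $v_L$ to both expansions of $f$ forces the initial polynomials to agree in $k(x)[\caC_{\cX,x}]$, hence $F\cap S = F\cap S'$ and $c_\gamma(x) = c'_\gamma(x)$ for every such $\gamma$. Running over all compact faces of the Newton polyhedron gives that $\Gamma_x(f)$ is independent of the expansion and that $f_x \in k(x)[\caC_{\cX,x}]$ is well-defined.

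For (\ref{it:indep2}), two sections $\zeta,\zeta'$ of $\caM_{\cX,x}\to\caC_{\cX,x}$ differ by a unit-valued monoid homomorphism $\gamma \mapsto u_\gamma \in \caO_{\cX,x}^{\times}$ via $\zeta'(\gamma) = u_\gamma\,\zeta(\gamma)$. An admissible expansion $f = \sum c'_\gamma\,\zeta'(\gamma)$ with respect to $\zeta'$ rewrites as $\sum (c'_\gamma u_\gamma)\,\zeta(\gamma)$, which is admissible with respect to $\zeta$ and has exactly the same support. By (\ref{it:indep1}) the Newton polyhedron, and hence $\Gamma_x(f)$, is therefore the same for both sections.

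The main obstacle is that, unlike in the snc case of \cite{MustataNicaise}, Lemma 2.4.4, admissible expansions are not unique for a general log-regular $\cX^+$; one must show that any two admissible expansions differ only by contributions strictly higher up in the Newton polyhedron ordering, so that the compact-face data of the support are preserved. This can be handled via Kato's structure theorem for log-regular local rings, which presents $\widehat{\caO}_{\cX,x}$ as a suitable completed monoid algebra over a regular subring in which monomial valuations extend canonically. Alternatively, one can reduce to the snc case: pulling $f$ back to a regular snc log-resolution $\cX'^+\to\cX^+$ produced by Proposition \ref{prop morphism induced by subdivision kato fan}, the admissible expansion is unique at each preimage of $x$ by \cite{MustataNicaise}, Lemma 2.4.4, and the initial form can then be transported back to $x$ along the induced map of skeletons.
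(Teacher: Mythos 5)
Your argument for part~(\ref{it:indep1}) has a logical circularity at its core. You choose a linear form $L$ supporting a compact face and then assert that $L$ ``defines a monomial valuation $v_L$ on $\widehat{\caO}_{\cX,x}$ with the property that on any admissible expansion $g = \sum d_\gamma\gamma$ one has $v_L(g)=\min\{L(\gamma):d_\gamma\neq 0\}$, and that the associated initial residue \dots\ is intrinsic to $g$''. But the well-definedness of this valuation and the intrinsicness of the initial residue are precisely the content of Proposition~\ref{prop-init} itself (and of Proposition~\ref{prop-val}, whose proof in the paper is phrased in terms of $\Gamma_x(f)$ and therefore comes \emph{after} the present proposition). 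You cannot defer this to ``the remainder of the section'' because the remainder of the section depends on the present statement. In effect you are assuming the conclusion.

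You do name the right technical tool in the paragraph that follows: Kato's structure theorem for log-regular local rings. That is exactly what the paper invokes, in the form of the isomorphism of graded $k(x)$-algebras $\mathrm{gr}_I\,k(x)[\mathcal{C}_{\cX,x}]\to\mathrm{gr}_{\mathfrak{m}_x}\,\mathcal{O}_{\cX,x}$. With that isomorphism in hand, the argument is elementary and does not pass through monomial valuations at all: given two admissible expansions, subtract them to obtain an admissible expansion of $0$, observe that the evaluated coefficients of a putative minimal face agree with those of $c_\gamma-c_\gamma'$, and conclude via the graded isomorphism that these coefficients must vanish. So your ``backup'' is the actual proof, but it is left entirely unexecuted, and the execution is where the work lies.

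Your alternative reduction to the snc case rests on an incorrect premise. In \cite{MustataNicaise}, Lemma~2.4.4 asserts \emph{existence} of admissible expansions, not uniqueness, and indeed uniqueness fails already in the snc case (one can rewrite $c_\gamma\gamma$ as $\gamma + (c_\gamma-1)\gamma$ and re-expand $(c_\gamma-1)\gamma$ when $c_\gamma-1\in\mathfrak{m}_x$). What \cite{MustataNicaise} proves (their Proposition~2.4.4) is exactly the analogue of the present statement: independence of the initial form from the expansion. So the reduction would not by itself give you uniqueness; you would be reducing the problem to itself.

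Your treatment of part~(\ref{it:indep2}) is correct and matches the paper: any two sections of $\caM_{\cX,x}\to\caC_{\cX,x}$ differ by a monoid homomorphism into $\caO_{\cX,x}^{\times}$, which multiplies each coefficient by a unit and hence leaves the support of an admissible expansion unchanged, so that $\Gamma_x(f)$ is independent of the section once part~(\ref{it:indep1}) is known.
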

\begin{proof}
If we denote by $I$ the ideal of $k(x)[\mathcal{C}_{\cX,x}]$ generated by $\mathcal{C}_{\cX,x}\setminus \{1\}$, then it follows from \cite{Kato1994a} that there exists an isomorphism of $k(x)$-algebras \begin{equation} \label{equ isomo graded algebras}
\mathrm{gr}_I \,k(x)[\mathcal{C}_{\cX,x}]\to \mathrm{gr}_{\mathfrak{m}_x}\, \mathcal{O}_{\cX,x}.
\end{equation}
Using this result and following the argument of  \cite{MustataNicaise} Proposition 2.4.4, we show now that $f_x$ does not depend on the expansion of $f$. Let $$f=\sum_{\gamma \in \mathcal{C}_{\cX,x}}c'_{\gamma}\gamma$$ be another admissible expansion of $f$ with associated set $\Gamma_x(f)'$ and element $f'_x$. Then $$0=\sum_{\gamma \in \mathcal{C}_{\cX,x}}(c_{\gamma}- c'_\gamma)\gamma =\sum_{\gamma \in \mathcal{C}_{\cX,x}}d_{\gamma}\gamma$$ where the right hand side is an admissible expansion obtained by choosing admissible expansions for the elements $c_\gamma-c'_\gamma$ that do not lie in $\caO_{\cX,x}^\times \cup \{0\}$. In particular $d_\gamma(x)= c_\gamma(x)-c'_\gamma(x)$ for any $\gamma$ in $ \Gamma_x(f) \cup\Gamma_x(f)'$. The isomorphism of graded algebras in (\ref{equ isomo graded algebras}) implies that the elements $d_\gamma$ must all vanish, hence $\Gamma_x(f) = \Gamma_x(f)'$ and $f_x=f'_x$.  

Point \eqref{it:indep2} follows from the fact that the coefficients $c_\gamma$ of $f_x$ are independent of the chosen section up to multiplication by a unit in $\mathcal{O}_{\cX,x}$, so that the support $\Gamma_x(f)$ of $f_x$ only depends on $f$ and $x$.
\end{proof}

\begin{prop}\label{prop-val}
Let $x$ be a point of $F$ and let $$\alpha:\mathcal{C}_{\cX,x}\to (\R_{\geqslant 0},+)$$ be an element of $\sigma_x$.Then there exists a unique minimal real valuation $$v:\mathcal{O}_{\cX,x} \setminus \{0\}\to \R_{\geqslant 0}$$ such that $v(m)=\alpha(\overline{m})$ for each element $m$ of $\mathcal{M}_{\cX,x}$.
\end{prop}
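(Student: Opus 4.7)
The plan is to define
\[
v(f) = \min_{\gamma\in\Gamma_x(f)}\alpha(\gamma)
\]
for every nonzero $f\in\mathcal{O}_{\cX,x}$, using the admissible expansion $f=\sum_\gamma c_\gamma\gamma$ of \ref{paragr admissible expansion}. Well-definedness is immediate from Proposition \ref{prop-init}(\ref{it:indep2}), which asserts that $\Gamma_x(f)$ depends only on $f$ and $x$. Extending $\alpha$ linearly to $\mathcal{C}_{\cX,x}^{\gp}\otimes_{\Z}\R$ and using that $\alpha\geq 0$ on the sharp cone $\mathcal{C}_{\cX,x}$, this minimum coincides with $\inf_{\gamma\in S}\alpha(\gamma)$: the infimum of $\alpha$ on $S+\mathcal{C}_{\cX,x}$ is attained on a face whose vertices lie in $\Gamma_x(f)$.

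I would then check the valuation axioms together with the compatibility with $\mathcal{M}_{\cX,x}$. The ultrametric inequality $v(f+g)\geq\min(v(f),v(g))$ is immediate from $S_{f+g}\subseteq S_f\cup S_g$; the identity $v(m)=\alpha(\overline m)$ for $m\in\mathcal{M}_{\cX,x}$ follows from the trivial admissible expansion $m=u\cdot\overline m$ with $u$ a unit, so that $\Gamma_x(m)=\{\overline m\}$. The crux is multiplicativity $v(fg)=v(f)+v(g)$. The inequality $v(fg)\leq v(f)+v(g)$ is easy: pick $\gamma\in\Gamma_x(f)$ and $\delta\in\Gamma_x(g)$ realising the respective minima, so that $\gamma+\delta\in S_{fg}$. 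For the reverse inequality, set $d=v(f)+v(g)$; combining Proposition \ref{prop-init}(\ref{it:indep1}) with the graded algebra isomorphism (\ref{equ isomo graded algebras}), the ``$\alpha$-leading form'' of $fg$, namely $\sum_{\alpha(\epsilon)=d} c_\epsilon(x)\,\epsilon$, equals the product of the $\alpha$-leading forms of $f$ and $g$ in the monoid algebra $k(x)[\mathcal{C}_{\cX,x}]$. Since this algebra is an integral domain ($\mathcal{C}_{\cX,x}$ being a sharp, torsion-free, finitely generated monoid and $k(x)$ a field), the product is nonzero, forcing $v(fg)\geq d$.

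Finally, for minimality and uniqueness, let $w$ be any real valuation on $\mathcal{O}_{\cX,x}\setminus\{0\}$ with $w(m)=\alpha(\overline m)$ for all $m\in\mathcal{M}_{\cX,x}$. Every unit of $\mathcal{O}_{\cX,x}$ then satisfies $w(u)=0$, since $w\geq 0$ and $w(u)+w(u^{-1})=0$. For each integer $N\geq 1$, write $f=f_N+R_N$ with $f_N\in\mathcal{O}_{\cX,x}$ a finite polynomial in elements of $\mathcal{C}_{\cX,x}$ with unit coefficients taken from the admissible expansion and $R_N\in\mathfrak m_x^N\cap\mathcal{O}_{\cX,x}$; applying the ultrametric inequality term by term to $f_N$ and bounding $w(R_N)$ via $w(\pi)=1$ yields $w(f)\geq v(f)$ as $N\to\infty$. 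Uniqueness is then automatic, since two minimal valuations dominate each other. The main obstacle is the multiplicativity step, whose proof relies on the graded algebra isomorphism (\ref{equ isomo graded algebras}) and on the integrality of the monoid algebra $k(x)[\mathcal{C}_{\cX,x}]$.
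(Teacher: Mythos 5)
Your definition of $v$, the verification of $v(m)=\alpha(\overline m)$, and the multiplicativity argument via the integral domain $k(x)[\mathcal{C}_{\cX,x}]$ are all fine and match the paper in spirit (the paper asserts $(fg)_x=f_x g_x$ as ``straightforward''; your elaboration via leading forms and the graded isomorphism \eqref{equ isomo graded algebras} is a reasonable way to unpack that). The gap is in the minimality step.

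The $\mathfrak m_x$-adic truncation $f=f_N+R_N$ does not yield $w(f)\geqslant v(f)$, because there is no control of $w$ on $\mathfrak m_x^N$. The monoid morphism $\alpha:\mathcal{C}_{\cX,x}\to\R_{\geqslant 0}$ is allowed to vanish on a nontrivial face: whenever $\alpha$ lies on the boundary of $\sigma_x$ there are elements $\gamma\in\mathcal{C}_{\cX,x}\setminus\{1\}$ with $\alpha(\gamma)=0$, hence $w(\gamma)=0$ and $w(\gamma^N)=0$ for all $N$, while $\gamma^N\in\mathfrak m_x^N$. So $\mathfrak m_x^N$ contains elements of $w$-value zero for every $N$, and the claim ``bounding $w(R_N)$ via $w(\pi)=1$'' is unjustified: $\mathfrak m_x^N\not\subseteq(\pi^N)$, and nothing forces $w(R_N)$ to be large (or even $\geqslant v(f)$). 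Consequently the ultrametric inequality $w(f)\geqslant\min(w(f_N),w(R_N))$ gives no lower bound on $w(f)$. The paper's proof is built precisely to avoid this: it sets $C_\alpha=\mathcal{C}_{\cX,x}\setminus\alpha^{-1}(0)$, lets $I$ be the ideal generated by $C_\alpha$, and truncates with respect to the $I$-adic topology instead. Since the (finitely many) generators of $I$ have $\alpha$-value bounded below by some $\epsilon>0$, one has $w\to\infty$ on $I^N$, which makes the truncation argument legitimate. One then re-expands the coefficients $d_\beta$ (which lie outside $I$) along the vanishing face, and this two-step expansion produces an admissible $\mathfrak m_x$-adic expansion of $f$ from which $v(f)\leqslant\min\{\alpha(\beta)\mid d_\beta\ne 0\}\leqslant w(f)$ follows. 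Your argument works without modification only when $\alpha$ is strictly positive on $\mathcal{C}_{\cX,x}\setminus\{1\}$, i.e.\ on the interior of $\sigma_x$; to cover the boundary, you need to replace the $\mathfrak m_x$-adic completion by the $I$-adic one as the paper does.
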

\begin{proof}
We will prove that the map
\begin{equation}\label{eq:val}
v:\mathcal{O}_{\cX,x} \setminus \{0\}\to \R:f\mapsto
\min\{\alpha(\gamma)\,|\,\gamma\in \Gamma_x(f)\}\end{equation}
satisfies the requirements in the statement. We fix a section $$\mathcal{C}_{\cX,x}\to \mathcal{M}_{\cX,x}.$$
It is straightforward to check that $(f\cdot g)_x=f_x\cdot g_x$ for all $f$ and $g$ in $\mathcal{O}_{\cX,x}$. This implies that $v$ is a valuation. It is obvious that
$v(m)=\alpha(\overline{m})$ for all $m$ in $\mathcal{M}_{\cX,x}$, since we can write $m$ as the product of an element of $\mathcal{C}_{\cX,x}$ and a unit in $\mathcal{O}_{\cX,x}$.

Now we prove minimality. Consider any real valuation $$w:\mathcal{O}_{\cX,x}\to \R$$ such that $w(m)=\alpha(\overline{m})$ for each element $m$ of
$\mathcal{M}_{\cX,x}$, and let $f$ be an element of
$\mathcal{O}_{\cX,x}$. We must show that $w(f)\geqslant v(f)$.

We set $$C_{\alpha}=\mathcal{C}_{\cX,x}\setminus \alpha^{-1}(0).$$ We denote by $I$ the ideal in $\mathcal{O}_{\cX,x}$ generated by $C_{\alpha}$ and by $A$ the $I$-adic completion of $\mathcal{O}_{\cX,x}$. By Lemma \ref{lemma admissible expansion}, we see that we can write $f$ in $A$ as \begin{equation} \label{equ expasion wrt I}
\sum_{\beta \in C_{\alpha}\cup\{1\}}d_\beta \beta
\end{equation} where $d_\beta$ is either zero or contained in the complement of $I$ in $\mathcal{O}_{\cX,x}$.

Since $\alpha(\beta)>0$ for every $\beta\in C_{\alpha}$, we can find an integer $N>0$ such that $w(g)>w(f)$ for every element $g$ in  $I^N$. So we have $$w(f)\geqslant \min \{\alpha(\beta)\,|\,d_\beta \neq 0\}$$ recalling that $w(\beta)=\alpha(\beta)$ for all $\beta$ in $\mathcal{C}_{\cX,x}$.

We consider the coefficients in the expansion (\ref{equ expasion wrt I}) of $f$. Applying Lemma \ref{lemma admissible expansion} as in paragraph (\ref{paragr admissible expansion}), we can write admissible expansions of these coefficients in $\widehat{\mathcal{O}}_{\cX,x}$ as $$d_\beta = \sum_{\gamma \in \caC_{\cX,x}}c_{\gamma, \beta} \gamma, \quad c_{\gamma, \beta} \in \caO_{\cX,x}^\times \cup \{0\},$$ with $\alpha(\gamma)=0$ in the expansions of $d_\beta$ that belong to $\mathfrak{m}_x \setminus I$. 

Therefore we obtain an admissible expansion of $f$ $$f = \sum_{\substack{\beta \in C_\alpha \cup \{1\} \\ \gamma \in \caC_{\cX,x}}}c_{\gamma, \beta}\, \gamma\beta $$ and we have $v(f)=\min\{\alpha(\gamma \beta)\,|\,c_{\gamma, \beta}\neq 0\} \geqslant\min\{\alpha(\beta)\,|\,d_{\beta}\neq 0\} \geqslant w(f).$
\end{proof}
 
\begin{rem} \label{rem def-val}
In the definition (\ref{eq:val}) of the valuation $v$, we compute the minimum over the terms in the initial support of $f$: these elements are a finite number and they only depends on $x$ and $f$ by Proposition \ref{prop-init}. Therefore, this minimum provides a well-defined function on $\caO_{\cX,x} \setminus \{0\}$. Nevertheless, it is equivalent to consider the minimum over all the terms of an admissible expansion of $f$, i.e. for any admissible expansion $f=\sum_{\gamma \in \mathcal{C}_{\cX,x}}c_{\gamma}\gamma$ $$\min\{\alpha(\gamma)\,|\,\gamma\in \Gamma_x(f)\} = \min\{\alpha(\gamma)\,|\,\gamma\in S\},$$ where $S=\{\gamma\in\mathcal{C}_{\cX,x}\,|\,c_\gamma\neq 0\}$ as in (\ref{equ def S}). Indeed, any element that belongs to $S$ can be written as a sum of an element of the initial support of $f$ and an element of $\caC_{\cX,x}$. Since the morphism $\alpha$ is additive and takes positive real values, then the minimum is necessarily attained by the elements in the initial support.
\end{rem}

\spa{ We will denote the valuation $v$ from Proposition
\ref{prop-val} by $v_{x,\alpha}$. Since $v_{x,\alpha}$ induces a real valuation on the function field of $\cX_K$ that extends the discrete valuation $v_K$ on $K$, it defines a point of the $K$-analytic space $\cX_K^\an$, which we will denote by the same symbol $v_{x,\alpha}$. We now show that the characterization of $v_{x,\alpha}$ in Proposition \ref{prop-val} implies that $$v_{y,\alpha'}=v_{x,\alpha'\circ \tau_{x,y}}$$ for every $y$ in $F(x)$ and every $\alpha'$ in $\sigma_y$.

Firstly we note that  $\mathcal{O}_{\cX,y}$ is the localization of $\mathcal{O}_{\cX,x}$ with respect to the elements of $m\in \mathcal{M}_{\cX,x}$ in the kernel of $\tau_{x,y}$.
Indeed, by construction of $\tau_{x,y}$, the kernel is given by $$\ker (\tau_{x,y})= \{s \in \caC_{\cX,x} | s(y) \neq 0 \};$$ to obtain $\mathcal{O}_{\cX,y}$ from $\mathcal{O}_{\cX,x}$, we localize  by $$S=\{a \in \mathcal{O}_{\cX,x} | a(y) \neq 0 \};$$ therefore we can identify the set of elements of $\caM_{\cX,x}$ whose reduction is in $\ker(\tau_{x,y})$ with the set $S$, recalling that, for points in the Kato fan, $\mathcal{C}_{\cX,x}\setminus \{1\}$ generates the maximal ideal of $\mathcal{O}_{\cX,x}$. Therefore we are dealing with these two morphisms:
$$\caO_{\cX,x} \hookrightarrow S^{-1}\caO_{\cX,x}=\caO_{\cX,y},$$
$$ \caC_{\cX,x} \twoheadrightarrow \caC_{\cX,x}/S = \caC_{\cX,y}.$$

Let $f$ be an element of $ \caO_{\cX,x}$. Under the notations of Lemma \ref{lemma admissible expansion}, we apply the lemma to $A= \caO_{\cX,y}$ and $B=\caO_{\cX,x}$, choosing a system of generators of $\mathfrak{m}_y$ in $\caC_{\cX,x}$: we can find an admissible expansion of $f$ of the form $$f=\sum_{\delta \in \mathcal{C}_{\cX,y}}d_{\delta}\delta \quad \text{ with }d_\delta \in (\caO_{\cX,x} \cap \caO_{\cX,y}^\times) \cup \{0\}.$$ Admissible expansions of coefficients $d_\delta$ induce an admissible expansion for $f$ by $$f=\sum_{\delta \in \mathcal{C}_{\cX,y}}\big( \sum_{\gamma \in S} c_{\gamma \delta} \gamma \big)\delta \quad \text{ with }c_{\gamma \delta} \in \caO_{\cX,x}^\times \cup \{0\},$$ where $\gamma$ runs through the set $S$ since $d_\delta \in \caO_{\cX,y}^\times$. Thus we have \begin{align*}
v_{y,\alpha'}(f) &= \min\{\alpha'(\delta)\,|\,\delta\in \Gamma_y(f)\}\\
& = \min\{\alpha' \circ \tau_{x,y}( \gamma \delta)\,|\,\delta\in \Gamma_y(f), \gamma \in S\}\\
& = \min\{\alpha' \circ \tau_{x,y}( \gamma \delta)\,|\,\gamma\delta\in \Gamma_x(f)\}\\
& = v_{x,\alpha' \circ \tau_{x,y}}(f).
\end{align*}
Hence, we obtain a well-defined map $$\iota:\Sk(\cX^+)\to \cX_K^\an$$ by sending $\alpha$ to $v_{x,\alpha}$ for every point $x$ of $F$ and every $\alpha\in \sigma_x$.}

\begin{prop}\label{prop-embed}
The map $$\iota:\Sk(\cX^+)\to\cX_K^\an$$  is a
topological embedding.
\end{prop}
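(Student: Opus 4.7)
The plan is to verify the three properties constituting a topological embedding: continuity of $\iota$, injectivity, and continuity of the inverse on the image. I will prove continuity first, then injectivity using a canonical-representative argument, and finally conclude by analysing how the Berkovich topology restricts to the image.

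For continuity, by the universal property of the quotient topology defining $\Sk(\cX^+) = T/\sim$, it is enough to check continuity of each restriction $\iota|_{\sigma_x}$ for $x \in F$. The Berkovich topology is generated by the forgetful map $\phi$ and by evaluations $|f|$ with $f$ a regular function on an open subset of $\cX_K$. Since $v_{x,\alpha}$ is real-valued on $\mathcal{O}_{\cX,x}\setminus\{0\}$ (the minimum in \eqref{eq:val} is taken over a finite set of non-negative reals), it extends with trivial kernel to a real valuation on the function field $K(\cX)=K(\cX_K)$, so $\phi\circ\iota$ is the constant map to the generic point of $\cX_K$, which lies in every nonempty open. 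For any $f\in K(\cX)$, writing $f=g/h$ with $g,h\in\mathcal{O}_{\cX,x}$ and applying Proposition~\ref{prop-val} together with Remark~\ref{rem def-val}, $v_{x,\alpha}(g)$ is the minimum of finitely many $\R$-linear functionals $\gamma\mapsto\alpha(\gamma)$ indexed by $\Gamma_x(g)$. This is piecewise linear in $\alpha$, hence continuous; the same reasoning applies to $h$, and therefore $|f|\circ\iota|_{\sigma_x}$ is continuous.

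For injectivity, I will show that every equivalence class in $\Sk(\cX^+)$ admits a unique representative $(y,\beta)$ with $\beta$ in the relative interior of $\sigma_y$, meaning $\beta$ is strictly positive on $\mathcal{C}_{\cX,y}\setminus\{1\}$. Given $\alpha\in\sigma_x$, the set $\ker(\alpha)=\{s\in\mathcal{C}_{\cX,x}\mid\alpha(s)=0\}$ is a face of $\mathcal{C}_{\cX,x}$, hence by the bijection between faces of $\mathcal{C}_{\cX,x}$ and points of $F(x)$ recalled at the start of section~\ref{sect skeleton log}, it corresponds to a unique $y\in F(x)$. Then $\alpha$ factors through $\tau_{x,y}$ as $\alpha=\beta\circ\tau_{x,y}$ for a unique $\beta\in\sigma_y$ in the relative interior, and the identity $v_{x,\alpha}=v_{y,\beta}$ verified immediately above this proposition gives $\alpha\sim\beta$. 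For such a canonical representative, $y$ is the centre of $v:=v_{y,\beta}$ in $\cX$: the strict positivity of $\beta$ off $\{1\}$ forces $v(f)>0$ for every $f\in\mathfrak{m}_y$ (any admissible expansion of such $f$ has $\Gamma_y(f)\subset\mathcal{C}_{\cX,y}\setminus\{1\}$), while $v(u)=0$ for $u\in\mathcal{O}_{\cX,y}^{\times}$ because $1\in\Gamma_y(u)$. Hence $y$ is determined by $v$, and then $\beta$ is recovered from $\beta(\bar m)=v(m)$ on $\mathcal{M}_{\cX,y}$.

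For the continuity of the inverse, the same recovery formula applied to an arbitrary lift $m_\gamma\in\mathcal{M}_{\cX,x}$ of $\gamma\in\mathcal{C}_{\cX,x}$ gives $\alpha(\gamma)=v_{x,\alpha}(m_\gamma)$, independently of the lift since different lifts differ by a unit of $\mathcal{O}_{\cX,x}$, on which the valuation vanishes. Therefore, if a net $v_{x,\alpha_n}$ converges to $v_{x,\alpha}$ in $\cX_K^{\an}$, then $\alpha_n(\gamma)=v_{x,\alpha_n}(m_\gamma)\to v_{x,\alpha}(m_\gamma)=\alpha(\gamma)$ for every $\gamma$, which is precisely the topology of pointwise convergence defining $\sigma_x$. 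Thus $\iota|_{\sigma_x}$ is a topological embedding. To globalise, I would use that the Kato fan $F$ is finite (since $\cX^+$ is of finite type over $S^+$), so $\Sk(\cX^+)$ is a finite polyhedral complex, together with the Hausdorff property of $\cX_K^{\an}$ and the compatibility of the equivalence relation $\sim$ with the face relations, to show that the subspace topology on $\iota(\Sk(\cX^+))$ coincides with the topology on $\Sk(\cX^+)$. The main technical subtlety I anticipate is handling unbounded cells, which arise from horizontal components of $D_\cX$ and prevent a direct properness argument; nevertheless, the cell-wise embedding combined with the local-isomorphism structure around each point of $F$ should allow the gluing to go through.
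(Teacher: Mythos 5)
Your proof is correct and follows the same overall skeleton as the paper's: it reduces to single cells, proves injectivity by passing to the canonical representative $(y,\beta)$ with $\beta^{-1}(0)=\{1\}$ so that $y$ is recovered as the centre of the valuation and $\beta$ from the values on $\mathcal{M}_{\cX,y}$, and establishes continuity of $\iota|_{\sigma_x}$ from the piecewise-linear formula~\eqref{eq:val}. The genuine difference is in the ``homeomorphism onto image'' step. The paper's proof invokes the standard fact that a continuous injection from a compact space to a Hausdorff space is a homeomorphism onto its image, applied to $\iota|_{\sigma_x}$; but as the paper itself observes, when $D_{\cX}$ has horizontal components the cell $\sigma_x$ can be an \emph{unbounded} polyhedron, so this compactness argument does not apply verbatim to such cells. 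Your approach circumvents this entirely by reading off $\alpha(\gamma)=v_{x,\alpha}(m_\gamma)$ from any lift $m_\gamma\in\mathcal{M}_{\cX,x}$ and observing that convergence in $\cX_K^{\an}$ forces pointwise convergence of the $\alpha$'s, which is the topology on $\sigma_x$ by definition. That is a cleaner and more robust argument for the cell-wise inverse, and it actually patches the compactness gap in the paper's proof. What you leave informal is the final gluing from the cell-wise statement to the whole complex (you flag this yourself); a complete argument would note that a divisorial point $v$ in the relative interior of $\sigma_y$ has $\redu_{\cX}(v)=y$, that the reduction map $\redu_{\cX}$ is anticontinuous, and that the only cells whose images under $\iota$ can accumulate at $\iota(v)$ are the finitely many $\sigma_x$ with $y\in F(x)$, so the topology is controlled by these finitely many cells near each point. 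Neither proof spells this out in detail, so your proposal is on the same footing as the paper's (and on the key cell-wise step, slightly sounder).
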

\begin{proof}
First, we show that $\iota$ is injective. Let $x$ be a point of $F$ and $\alpha$ an element of $\sigma_x$. Let $y$ be the point of $F(x)$ corresponding to the face $\mathcal{C}_{\cX,x}\setminus
\alpha^{-1}(0)$ of $\mathcal{C}_{\cX,x}$. Then $\alpha$ factors through an element
$$\alpha':\mathcal{C}_{\cX,y}\to \R_{\geqslant 0}$$ of $\sigma_y$.  Note that $\alpha=\alpha'$ in $\Sk(\cX^+)$ because $\alpha=\alpha'\circ \tau_{x,y}$.
Moreover, since $(\alpha')^{-1}(0)=\{1\}$, the center of the valuation $v_{y,\alpha'}$ is the point $y$, so that $\redu_{\cX}(v_{y,\alpha'})=y$. Thus we can recover $y$ from $v_{y,\alpha'}$. Then we can also reconstruct $\alpha'$ by looking at the values of $v_{y,\alpha'}$ at the elements of $\mathcal{M}_{\cX,y}$. We conclude that $\iota$ is injective.

Now, we show that $\iota$ is a homeomorphism onto its image. For any valuation $v$ in $\Sk(\cX^+)$ and any small open neighbourhood $U$ of $\iota(v)$ in $\cX_K^\an$, there exists a closed subset $C$ in $\Sk(\cX^+)$ such that $U \cap \iota(\Sk(\cX^+))\subseteq \iota(C)$ and, up to subdivisions, we can assume that the $C$ is a closed cell of $\Sk(\cX^+)$. Therefore, it suffices to prove that the restriction of $\iota$ to any closed cell $\sigma_x$ of $\Sk(\cX^+)$ is an homeomorphism. The restriction $\iota_{|\sigma_x}$ is an injective map from a compact set to the Hausdorff space $\cX_K^\an$, so we reduce to show that $\iota_{|\sigma_x}$ is continuous, to conclude that $\iota_{|\sigma_x}$ is a homeomorphism. By definition of the Berkovich topology, it is enough to prove that the map $$\sigma_x\to \R:\alpha\mapsto v_{x,\alpha}(f)$$ is continuous for every $f$ in $\mathcal{O}_{\cX,x}$. This is obvious from the formula \eqref{eq:val}. 
\end{proof}

\spa{From now on, we will view $\Sk(\cX^+)$ as a topological subspace of $\cX_K^{\an}$ by means of the embedding $\iota$ in Proposition \ref{prop-embed}. If $\cX$ is regular over $R$ and $\cX_k$ is a divisor with strict normal crossings, the skeleton $\Sk(\cX^+)$ was described in \cite{MustataNicaise}, Section 3.1.}

\subsection{Contracting the generic fiber to the skeleton.}

\spa{We denote by $D_{\cX,\text{hor}}$ the component of $D_\cX$ not contained in the special fiber $\cX_k$. The inclusion $\iota: \Sk(\cX^+) \rightarrow \cX_K^\an$ is actually an inclusion in $(\cX_K \setminus D_{\cX, \text{hor}})^\an$ and it admits a continuous retraction $$\rho_\cX: (\cX_K \setminus D_{\cX, \text{hor}})^\an\rightarrow \Sk(\cX^+)$$ constructed as follows. Let $x$ be a point of $(\cX_K \setminus D_{\cX, \text{hor}})^\an$ and consider the reduction map $$\redu_{\cX}:(\cX_K \setminus D_{\cX, \text{hor}})^\an \rightarrow \cX_k.$$ Let $E_1, \ldots, E_r$ be the irreducible components of $D_\cX$ passing through the point $\redu_\cX(x)$. We denote by $\xi$ the generic point of the connected component of $E_1 \cap \ldots \cap E_r$ that contains $\redu_\cX(x)$. By Lemma \ref{lemma points Kato fan}, $\xi$ is a point in the associated Kato fan $F$. We set $\alpha$ to be the morphism of monoids $$\alpha: \caC_{\cX,\xi} \rightarrow \R_{\geqslant 0 }$$ such that $\alpha(\overline{m}) = v_x(m)$ for any element $m$ of $\caM_{\cX,\xi}$. In particular $\alpha(\pi)= v_x(\pi)=1$ as we assumed the normalization of all valuations in the Berkovich space. Then $\rho_\cX(x)$ is the point of $\Sk(\cX^+)$ corresponding to the couple $(\xi,\alpha)$. By construction $\rho_\cX$ is continuous and right inverse to the inclusion $\iota$. 
}
\spa{Given a dominant morphism $f: \cX^+ \rightarrow \cY^+$ of integral flat separated log-regular log schemes over $S$, it induces a map between the set of birational points $\text{Bir}(\cX_K) \rightarrow \text{Bir}(\cY_K)$. As $\text{Bir}(\cX_K) \subseteq  (\cX_K \setminus D_{\cX, \text{hor}})^\an$, we can employ the retraction $\rho$ to define a map of skeletons as follows
\begin{center}
$\begin{gathered}
\xymatrixcolsep{2pc} \xymatrix{
 \text{Bir}(\cX_K) \ar[r]^-{\widehat{f}} \ar[d]_{\rho_\cX} &  \text{Bir}(\cY_K) \ar[d]^{\rho_\cY}\\
  \Sk(\cX^+) \ar@{.>}[r]^{} \ar@/_/[u]_-{\iota_\cY}  & \Sk(\cY^+).
}
\end{gathered}$\end{center} This association makes the skeleton construction $\Sk(\cX^+)$ functorial in $\cX^+$ with respect to dominant morphisms.
}

\subsection{Skeleton of a $fs$ fibred product.}
\spa{Let $\cX^+$ and $\cY^+$ be log-smooth log schemes over $S^+$, let $\cZ^+$ be their $fs$ fibred product. Let $$\Sk(\cZ^+) \rightarrow \Sk(\cX^+) \times \Sk(\cY^+)$$ be the continuous map of skeletons functorially associated to the projections $\pr_{\cX}:\cZ^+ \rightarrow \cX^+$ and $\pr_{\cY}:\cZ^+ \rightarrow \cY^+$. We denote this map by $\big(\pr_{\Sk(\cX)}, \pr_{\Sk(\cY)}\big)$ and we recall that it is constructed considering the diagram
\begin{equation} \label{diagram map on skeletons funct induced}
\begin{gathered}
\xymatrixcolsep{6pc} \xymatrix{
 \text{Bir}(\cZ_K) \ar[r]^-{(\widehat{\pr_{\cX}}, \widehat{\pr_{\cY}})} \ar[d]_{\rho_\cZ} & \text{Bir}(\cX_K) \times  \text{Bir}(\cY_K) \ar[d]^{(\rho_\cX, \rho_\cX)}\\
  \Sk(\cZ^+) \ar[r]^-{(\pr_{\Sk(\cX)}, \pr_{\Sk(\cY)})} \ar@/_/[u]_-{\iota_\cZ}  & \Sk(\cX^+) \times \Sk(\cY^+).
}
\end{gathered}\end{equation}
}

\begin{prop}  \label{prop semistability and skeletons}
Assume that the residue field $k$ is algebraically closed.  If $\cX^+$ is semistable, then the map $\big(\pr_{\Sk(\cX)}, \pr_{\Sk(\cY)}\big)$ is a PL homeomorphism.
\end{prop}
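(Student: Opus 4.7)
The plan is to show that $(\pr_{\Sk(\cX)}, \pr_{\Sk(\cY)})$ is a cellular map that sends each cell of $\Sk(\cZ^+)$ linearly onto a product of cells of $\Sk(\cX^+) \times \Sk(\cY^+)$, bijectively. The PL homeomorphism property then follows by gluing the linear isomorphisms on cells consistently through the cospecialization morphisms.

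\textbf{Step 1 (bijection on Kato fans).} By paragraph \ref{rem points in kato fan and skeleton}, the cells of $\Sk(\cZ^+)$ correspond bijectively to the points of $F_\cZ \cap \cZ_k$, and similarly for $\cX^+$ and $\cY^+$. Since $\cX^+$ is semistable and every point of $F_\cX \cap \cX_k$ lies in $\cX_k$ (and likewise for $\cY$), Proposition \ref{prop semistability and iso Kato fans} implies that the projection map $F_\cZ \to F_\cX \times_{F_S} F_\cY$ restricts to a bijection between $F_\cZ \cap \cZ_k$ and $(F_\cX \cap \cX_k) \times_{\{s\}} (F_\cY \cap \cY_k)$.

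\textbf{Step 2 (face-wise identification).} Fix $z \in F_\cZ \cap \cZ_k$ mapping to the pair $(x,y)$. I claim $\sigma_z \simeq \sigma_x \times \sigma_y$. By Lemma \ref{lemma criterion saturation}, the semistability of $\cX^+$ means that $\N \to \caC_{\cX,x}$ is a saturated morphism of $fs$ monoids, so the amalgamated sum $\caC_{\cX,x} \oplus_{\N} \caC_{\cY,y}$ is already saturated. Combined with the description of $fs$ fibred products of Kato fans in paragraph \ref{sect fs product} and the local isomorphism of Proposition \ref{local isomo kato fan product}, this yields $\caC_{\cZ,z} \simeq \caC_{\cX,x} \oplus_{\N} \caC_{\cY,y}$. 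The universal property of the amalgamated sum gives the natural linear bijection
$$\sigma_z = \{\alpha : \caC_{\cZ,z} \to \R_{\geqslant 0} \mid \alpha(\pi) = 1\} \xrightarrow{\sim} \sigma_x \times \sigma_y, \qquad \alpha \mapsto (\alpha|_{\caC_{\cX,x}},\, \alpha|_{\caC_{\cY,y}}),$$
which is compatible with face inclusions induced by cospecialization.

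\textbf{Step 3 (compatibility with the functorial map).} I must verify that the face-wise bijection of Step 2 coincides with $(\pr_{\Sk(\cX)}, \pr_{\Sk(\cY)})$ as constructed from diagram \eqref{diagram map on skeletons funct induced}. Tracing through the retraction $\rho_\cX$ and the inclusion of birational points, this reduces to showing that for $\alpha \in \sigma_z$ the valuation $v_{z,\alpha}$ on $\caO_{\cZ,z}$, restricted along $\caO_{\cX,x} \hookrightarrow \caO_{\cZ,z}$, equals $v_{x,\alpha|_{\caC_{\cX,x}}}$. One direction is immediate from the minimality characterization in Proposition \ref{prop-val}, since the restriction is a real valuation on $\caO_{\cX,x}$ with $v(m) = \alpha(\overline{m})$ for $m \in \caM_{\cX,x}$. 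For the other direction one uses that an admissible expansion of $f \in \caO_{\cX,x}$ (with respect to a system of generators of $\mathfrak{m}_x$ in $\caC_{\cX,x}$) remains admissible as an expansion of $f$ viewed in $\caO_{\cZ,z}$, because the images of these generators belong to $\caC_{\cZ,z}$ under the identification of Step 2. Thus equality holds.

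The main obstacle is Step 3: the careful comparison of admissible expansions in $\caO_{\cZ,z}$ with those in $\caO_{\cX,x}$ through the $fs$ fibred product structure, to ensure that no cancellation occurs that would make the restricted valuation strictly larger than $v_{x,\alpha|_{\caC_{\cX,x}}}$. Once Steps 1--3 are established, the map $(\pr_{\Sk(\cX)}, \pr_{\Sk(\cY)})$ is a bijection that is linear on each cell with linear inverse, hence a PL homeomorphism.
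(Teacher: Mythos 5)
Your proposal is correct and follows essentially the same structure as the paper's proof: both rest on Proposition \ref{prop semistability and iso Kato fans} to get a bijection on the level of Kato fan points, both exploit that $\caC_{\cZ,z}$ is the amalgamated sum of $\caC_{\cX,x}$ and $\caC_{\cY,y}$ over $\N$ to split cells as products, and both must reconcile this combinatorial picture with the functorially induced map via the retraction $\rho_\cX$. The main difference is your Step 3: you reduce the compatibility check to the equality of full valuations $v_{z,\alpha}|_{\caO_{\cX,x}} = v_{x,\alpha|_{\caC_{\cX,x}}}$, which sends you into a comparison of admissible expansions and is exactly the point you flag as the ``main obstacle.'' The paper avoids this by observing that $\rho_\cX$ only needs to compare values on the log monoid $\caM_{\cX,x}$, not on all of $\caO_{\cX,x}$: by the defining property of the retraction, $\rho_\cX(v_{z,\varepsilon}\circ i)$ is the point $(x,\alpha)$ with $\alpha(\overline m) = v_{z,\varepsilon}(m)$ for $m\in\caM_{\cX,x}$, and this value manifestly equals $\varepsilon(i_x(\overline m)) = \pr_\cX(\varepsilon)(\overline m)$. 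The full-valuation equality you aim for does hold (your push-forward of an admissible expansion is again admissible, since the inclusion $\caO_{\cX,x}\hookrightarrow\caO_{\cZ,z}$ is local so sends units to units, and $\caC_{\cX,x}\hookrightarrow\caC_{\cZ,z}$), and combined with your minimality argument it closes the gap; it is simply more work than the statement requires. One small addition: your plan establishes injectivity and cell-wise linearity but doesn't say explicitly that the map is onto; this follows either from the cell bijection of Steps 1--2 together with Step 3, or, as in the paper, directly from surjectivity of $(\rho_\cX,\rho_\cY)\circ(\widehat{\pr_\cX},\widehat{\pr_\cY})$ in the defining diagram, which is the cleanest route.
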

\begin{proof}
The surjectivity of the map $\big(\pr_{\Sk(\cX)}, \pr_{\Sk(\cY)}\big)$ follows from the commutativity of the diagram (\ref{diagram map on skeletons funct induced}) and the surjectivity of $(\rho_\cX, \rho_\cX) \circ (\widehat{\pr_{\cX}}, \widehat{\pr_{\cY}})$. To prove the injectivity of $\big(\pr_{\Sk(\cX)}, \pr_{\Sk(\cY)}\big)$, we provide an explicit description of the map $\pr_{\Sk(\cX)}$.

We recall that the projection $\widehat{\pr_{\cX}}$ is such that a valuation $v$ on the function field $K(\cZ_K)$ maps to the composition $v \circ i$ where $i:K(\cX_K) \hookrightarrow K(\cZ_K)$.

Let $v_{z,\varepsilon}$ be the valuation in $\Sk(\cZ^+)$ corresponding to a couple $(z,\varepsilon)$ with $z \in F_{\cZ} \cap \cZ_k$ and $\varepsilon \in \sigma_z$. We consider the morphism of associated Kato fans $$F_{\cZ} \rightarrow F_{\cX} \times_{F_S} F_{\cY}$$ as established in Proposition \ref{local isomo kato fan product}. We denote respectively by $\pr_{F_\cX}$ and $\pr_{F_\cY}$ the projection to the first and second factor. Then $\pr_{F_{\cX}}(z)$ is a point in the associated Kato fan $F_{\cX}$, that we denote by $x$. We consider the morphism of monoids $$i_x: \caC_{\cX,x} \rightarrow \caC_{\cZ,z}$$ and the composition \begin{center}
$\begin{gathered}
\xymatrixcolsep{1pc} \xymatrixrowsep{0pc} \xymatrix{
    \pr_{\cX}(\varepsilon): & \caC_{\cX,x} \ar[r]^-{i_x} & \caC_{\cZ,z}= (\caC_{\cX,x} \oplus_{\N} \caC_{\cY,y})^{\text{sat}} \ar[r]^-{\varepsilon} & \R_{\geqslant 0} \\
   & a \ar@{|->}[r] & [a,1] \ar@{|->}[r] & \varepsilon([a,1]). \\
}
\end{gathered}$\end{center} It trivially satisfies $\varepsilon \circ i_x(\pi)=1$. In order to conclude that it correctly defines a point in the skeleton $\Sk(\cX^+)$, we need to check the compatibility with respect to the equivalence relation $\sim$. Indeed, suppose that $\varepsilon = \varepsilon' \circ \tau_{z,z'}$ for some $z' \in \overline{\{z\}}$. We denote by $x'$ the projection of $z'$ under the local isomorphism of associated Kato fans. The diagram
\begin{center}
$\begin{gathered}
\xymatrixcolsep{2pc} \xymatrixrowsep{0pc} \xymatrix{
    \caC_{\cX,x} \ar[r]^-{i_x} \ar[dd]^{\tau_{x,x'}} & \caC_{\cZ,z} \ar[rd]^-{\varepsilon} \ar[dd]^{\tau_{z,z'}} & \\
    & & \R_{\geqslant 0} \\
    \caC_{\cX,x'} \ar[r]^-{i_{x'}} & \caC_{\cZ,z'} \ar[ru]_-{\varepsilon'} &  \\
}
\end{gathered}$\end{center} is commutative as made up by a commutative square and a commutative triangle of arrows. Therefore, by commutativity $$\pr_{\cX}(\varepsilon)= \pr_{\cX}(\varepsilon') \circ \tau_{x,x'}$$ and this implies that $\pr_{\cX}(\varepsilon)$ defines a well-defined point $v_{x,\pr_{\cX}(\varepsilon)}$ of $\Sk(\cX^+)$.

We claim that $v_{x, \pr_{\cX}(\varepsilon)}$ is indeed the image of $v_{z,\varepsilon}$ under the map $\pr_{\Sk(\cX)}$, hence that the equality in the following inner diagram holds
\begin{center} 
$\begin{gathered}
\xymatrixcolsep{0.4pc}\xymatrixrowsep{0.1pc} \xymatrix{
 \widehat{\cZ_{\eta}} \ar[rrr]^-{\widehat{\pr_{\cX}}} \ar[dddd]_{\rho_\cZ} & & & \widehat{\cX_{\eta}}\ar[dddd]^{\rho_\cX}\\
 \footnotesize & v_{z,\varepsilon} \ar@{|->}[r] &  v_{z,\varepsilon} \circ i\ar@{|->}[dd] & \\
 & & & \\
 & v_{z,\varepsilon} \ar@{|->}[uu] \ar@{|->}[r] & \rho_\cX(v_{z,\varepsilon} \circ i) = v_{x, \pr_{\cX}(\varepsilon)}& \\
 \normalsize \Sk(\cZ^+) \ar[rrr]_-{\pr_{\Sk(\cX)}} \ar@/_/[uuuu]_-{\iota_\cZ}  & & & \Sk(\cX^+).
}
\end{gathered}$\end{center} We denote $\rho_\cX(v_{z,\varepsilon} \circ i)$ by $(x,\alpha)$ as a point of $\Sk(\cX^+)$. By definition of the retraction $\rho_\cX$, the morphism $\alpha$ is characterized by the fact that $\alpha(\overline{m})= (v_{z,\varepsilon} \circ i )(m)$ for any $m$ in $\caM_{\cX,x}$ and then we have $$\alpha(\overline{m})= (v_{z,\varepsilon} \circ i )(m) = v_{z,\varepsilon}(m)= \varepsilon(\overline{m}).$$ On the other hand, for any $m$ in $\caM_{\cX,x}$ $$v_{x,\pr_{\cX}(\varepsilon)}(m)= \pr_{\cX}(\varepsilon) (\overline{m}) = \varepsilon(\overline{m})$$ hence we obtain that $\alpha$ coincide with the morphism $\pr_{\cX}(\varepsilon)$. It means that their associated points $\rho_\cX(v_{z,\varepsilon} \circ i)$ and $v_{x, \pr_{\cX}(\varepsilon)}$ coincide in $\Sk(\cX^+)$.

Given a pair of points in $\Sk(\cX^+) \times \Sk(\cY^+)$, we know by surjectivity of $\big(\pr_{\Sk(\cX)}, \pr_{\Sk(\cY)}\big)$  that they are of the form $$(v_{x, \pr_{\cX}(\varepsilon)}, v_{y, \pr_{\cY}(\varepsilon)}).$$ The assumptions of semistability of $\cX^+$ and algebraic closedness of $k$ guarantee that there is a unique $z$ in $F_\cZ$ in the fiber of $x$ and $y$, by Proposition \ref{prop semistability and iso Kato fans} and Remark \ref{rem points in kato fan and skeleton}. Moreover, we can uniquely reconstruct $\varepsilon$ by looking at the values of $v_{x, \pr_{\cX}(\varepsilon)}$ at the elements of $\caM_{\cX,x}$ and respectively of $v_{y, \pr_{\cY}(\varepsilon)}$ at the elements of $\caM_{\cY,y}$. We conclude that $\big(\pr_{\Sk(\cX)}, \pr_{\Sk(\cY)}\big)$  is injective.
\end{proof}

The assumption of semistability is crucial in the result of Proposition \ref{prop semistability and skeletons}. To see this, it is helpful to consider an example.
\begin{example}\label{quartic} Let $q$ be the equation a generic quartic curve in $\mathbb{P}^2_{\C((t))}$. Then $\cX:t q+x^2y^2=0$ gives the equation of a family of genus $3$ curves, degenerating to two double lines. The dual complex $\D(\cX_\C)$ of the special fiber $\cX_\C$ is a line segment and $\cX$ has 4 singularities of type $A_1$ in each component of the special fiber, corresponding to the base points of the family. In this case taking a semistable model of $\cX_{\C((t))}$ requires an order two base change $R'$ of $R=\C[[t]]$, which induces coverings branched at each of these singular points (see \cite{HarrisMorrison}, p.133 for details). Let $\cY$ be such a semistable reduction. Thus the special fiber of $\cY$ consists of two elliptic curves, call them $E_1$ and $E_2$, which intersect in two points, $p_A$ and $p_B$, which are the preimages of the point $(0:0:1)$. The dual complex $\D(\cY_\C)$ of the special fiber $\cY_\C$ is isomorphic to $S^1$.

We will compare the dual complex of $(\cX \times_{R} \cX)_\C$ with that of $(\cY\times_{R'} \cY)_\C$. The models $\cX$ and $\cY$ are not log-regular at every point, but from our prospective it is enough that they are log-regular at the generic point of each stratum. For the product with a  semistable model, the dual complex is the product of the dual complexes, and $\D((\cY\times_{R'} \cY)_\C)$ is therefore a real $2$-torus  $S^1 \times S^1$.

On the other hand, the dual complex of the product $(\cX \times_{R} \cX)_\C$ is given by a quotient of $S^1 \times S^1$ by the action of $\mathbb{Z}/2\Z$.  $\D((\cY \times_{R'} \cY)_\C)$ has the structure of a cell complex, whose cells correspond to ordered pairs of strata in  $\D(\cY_\C)$, so
\begin{align*}
&\text{zero-dimensional strata: }&(E_1, E_2), (E_1, E_1), (E_2, E_1), (E_2, E_2)\\
&\text{one-dimensional strata: }&(E_1, p_A), (E_1, p_B), (E_2, p_A), (E_2, p_B)\\
& &(p_A, E_1), (p_B, E_1), (p_A, E_2), (p_B, E_2)\\
&\text{two-dimensional strata: }& (p_A, p_A), (p_A, p_B), (p_B, p_A), (p_B, p_B).\end{align*}
The action of $\mathbb{Z}/2\Z$ fixes $E_1$ and $E_2$, while switching $p_A$ and $p_B$. Therefore it fixes exactly the zero dimensional strata while acting freely on the other points. The quotient, the complex $\D((\cX\times_{R}\cX)_\C)$, is piecewise-linearly homeomorphic to the sphere $S^2$. In particular, it is not isomorphic to the product of two line segments.
\end{example}

\section{The weight function}\label{sec weight}
From now on we assume $\text{char}(K)=\text{char}(k)=0$.
\subsection{Weight function associated to a logarithmic pluricanonical form.}
\spa{Let $X$ be a connected, smooth and proper $K$-variety of dimension $n$. We introduce the following notation: for any log-regular model $\cX^+$ of $X$, for any point $x = (\xi_x,|\cdot|_x) \in X^\an$ and for any $\Q$-Cartier divisor $D$ on $\cX^+$ whose support does not contain $\xi_x$, we set $$v_x(D) = - \ln |f(x)|$$ where $f$ is any element of $K(X)^\times$ such that $D= \divisor(f)$ locally at $\redu_{\cX}(x)$.}

\spa{Let $(X,\Delta_X)$ be an snc pair where $\Delta_X$ is an effective $\Q$-divisor such that $\Delta_X= \sum a_i \Delta_{X,i}$ has $0 \leqslant a_i \leqslant 1$. We denote $X^+=(X,\lceil \Delta_X \rceil)$. Let $\omega$ be a regular $m$-pluricanonical form on $X^+$ with poles of order at most $ma_i$ along $\Delta_{X,i}$, for some $m$ such that $ma_i \in \N$ for any $i$. We call such forms $\Delta_X$-logarithmic $m$-pluricanonical forms.

Moreover, $\omega$ is a regular section of the logarithmic $m$-pluricanonical bundle of $X^+$ and for each log-regular model $\cX^+$ of $X^+$, the form $\omega$ defines a divisor $\divisor_{\cX^+}(\omega)$ on $\cX^+$. Note that the multiplicity in $\divisor_{\cX^+}(\omega)$ of the closure $\overline{\Delta_{X,i}}$ in $\cX$ of $\Delta_{X,i}$ is at least $m(1-a_i)$.
}

\spa{Given a $\Delta_X$-logarithmic $m$-pluricanonical form $\omega$, we can consider it as a rational section of $\omega_{X/K}^{\otimes m}$. Hence, we can associate to $\omega$ the weight function $\weight_{\omega}$ as in \cite{MustataNicaise}. The following gives an interpretation of the weight function associated to $\omega$ in terms of logarithmic differentials, that we will use in the sequel.}
\begin{lemma} \label{lemma log formula for weight function}
Let $\cX^+$ be a log-regular model of $X^+$. Then for every point $x$ of $\Sk(\cX^+)$ $$\weight_{\omega}(x) = v_x(\divisor_{\cX^+}(\omega)) +m.$$
\end{lemma}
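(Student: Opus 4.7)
The plan is to reduce the claimed identity to Musta\c{t}\u{a}--Nicaise's divisorial formula on a suitable snc log-resolution of $\cX^+$, using that the log canonical is functorial under log-\'etale modifications. Given $x\in\Sk(\cX^+)$ corresponding to $(\xi,\alpha)$, I would first invoke the machinery of Section~\ref{section resolution via kato subd}: combining Propositions~\ref{kato subdivision}, \ref{prop morphism induced by subdivision kato fan} and \ref{resolution log scheme kato fan} with a barycentric refinement subdividing the ray of $x$, I produce a log blow-up $p\colon\tilde\cX^+\to\cX^+$ arising from a subdivision of $F_\cX$, such that $\tilde\cX$ is regular, $D_{\tilde\cX}$ is snc, and $x$ is a divisorial valuation $v_{\tilde E}/N_{\tilde E}$ associated to an irreducible (necessarily vertical) component $\tilde E$ of $\tilde\cX_k$.

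Because $p$ is log-\'etale, the natural map $p^*\omega^{\log,\otimes m}_{\cX^+/S^+}\to\omega^{\log,\otimes m}_{\tilde\cX^+/S^+}$ is an isomorphism. Writing $\omega=f\cdot\eta^{\otimes m}$ locally on $\cX^+$ with $\eta$ a log-canonical generator (so that $v_x(\divisor_{\cX^+}(\omega))=v_x(f)$), the image $p^*\eta$ differs from a local log-canonical generator $\tilde\eta$ on $\tilde\cX^+$ by a unit; consequently
$$v_x(\divisor_{\tilde\cX^+}(\omega))=v_x(\divisor_{\cX^+}(\omega)).$$

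On the snc model $\tilde\cX^+$, the identification $\omega^{\log,\otimes m}_{\tilde\cX^+/S^+}\simeq\omega^{\otimes m}_{\tilde\cX/R}\otimes\mathcal{O}_{\tilde\cX}(mD_{\tilde\cX,\text{hor}})$ gives $\divisor_{\tilde\cX^+}(\omega)=\divisor_{\tilde\cX/R}(\omega)+mD_{\tilde\cX,\text{hor}}$. Since $\tilde E$ is vertical, $v_x(D_{\tilde\cX,\text{hor}})=0$ and therefore $v_x(\divisor_{\tilde\cX^+}(\omega))=v_x(\divisor_{\tilde\cX/R}(\omega))$. Applying Musta\c{t}\u{a}--Nicaise's formula (\cite{MustataNicaise}) for the weight function at a divisorial point on an snc model of the smooth variety $X$,
$$\weight_\omega(x)=v_x(\divisor_{\tilde\cX/R}(\omega))+m,$$
and concatenating the three equalities yields the asserted identity for divisorial $x$. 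For arbitrary $x\in\Sk(\cX^+)$ I would conclude by continuity: both sides define piecewise-linear continuous functions on $\Sk(\cX^+)$, and divisorial points (rational vertices of successive refinements of $F_\cX$) are dense.

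The main delicacy I anticipate is the subdivision step, namely showing that $F_\cX$ admits a refinement realizing $x$ (with rational weights) as a ray, while keeping $\tilde\cX$ regular and $D_{\tilde\cX}$ snc; the irrational case is then handled cleanly by the PL continuity of both sides. A secondary point is making precise the identifications between log and non-log canonical sheaves on the snc model, and their compatibility with the log-\'etale pullback, so that the valuations of local equations of $\omega$ match as claimed.
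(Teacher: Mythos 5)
Your strategy is genuinely different from the paper's, and it works in outline, but two of your intermediate formulas are wrong in a way that happens to cancel, so the write-up as given does not quite hold together.

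On the approach: the paper reduces a divisorial point of $\Sk(\cX^+)$ to a divisorial component of the special fiber by iterating blow-ups at the closure of the center of $x$ (invoking \cite{KollarMori}, Lemma 2.45 for termination) and then directly computes that $\omega^{\log}$ pulls back to $\omega^{\log}$ under each such blow-up. You instead run a subdivision of the Kato fan $F_{\cX}$ (via Propositions~\ref{kato subdivision}, \ref{prop morphism induced by subdivision kato fan}, \ref{resolution log scheme kato fan}) to get a regular snc log-regular modification $\tilde\cX^+$ realizing $x$ as a component of $\tilde\cX_k$, and you use that the induced morphism is log-\'etale, so $p^*\omega^{\log}_{\cX^+/S^+}\cong\omega^{\log}_{\tilde\cX^+/S^+}$ automatically. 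That is a cleaner mechanism for the invariance step; one should just note (Section~\ref{rem resolution via subd} / \cite{Niziol2006}) that the map $\tilde\cX^+\to\cX^+$ produced by a fan subdivision is a log blow-up and hence log-\'etale. Also note you still need the subdivision to realize the \emph{given} rational $\alpha$ as a vertex, which is a barycentric/star subdivision point; you flag this yourself, and it is standard.

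The problem is in the translation to the Musta\c{t}\u{a}--Nicaise formula. The claimed identity
$$\omega^{\log,\otimes m}_{\tilde\cX^+/S^+}\simeq\omega^{\otimes m}_{\tilde\cX/R}\otimes\mathcal{O}_{\tilde\cX}(mD_{\tilde\cX,\mathrm{hor}})$$
is incorrect: the correct relation is
$$\omega^{\log}_{\tilde\cX^+/S^+}\simeq\omega_{\tilde\cX/R}\otimes\mathcal{O}_{\tilde\cX}(\tilde\cX_{k,\redu}+D_{\tilde\cX,\mathrm{hor}}-\tilde\cX_k),$$
so $\divisor_{\tilde\cX^+}(\omega)=\divisor_{\tilde\cX/R}(\omega)+m\tilde\cX_{k,\redu}-m\tilde\cX_k+mD_{\tilde\cX,\mathrm{hor}}$, and at $x=v_{\tilde E}/N_{\tilde E}$ the term you dropped contributes $m/N_{\tilde E}-m\neq 0$ unless $N_{\tilde E}=1$. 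Likewise the formula you quote from Musta\c{t}\u{a}--Nicaise, $\weight_\omega(x)=v_x(\divisor_{\tilde\cX/R}(\omega))+m$, is also off by exactly $m-m/N_{\tilde E}$; the correct statement (used by the paper, from \cite{NicaiseXu} Section~3.2.2) is $\weight_\omega(x)=v_x(\divisor_{(\tilde\cX,\tilde\cX_{k,\redu})}(\omega))+m$, equivalently $(\ord_{\tilde E}(\omega)+m)/N_{\tilde E}$. Because the two errors have opposite sign and equal magnitude, your final identity comes out right, but the intermediate claims are false as stated. The clean fix is to compare $\divisor_{\tilde\cX^+}(\omega)$ with $\divisor_{(\tilde\cX,\tilde\cX_{k,\redu})}(\omega)$ directly: the latter differs from the former only by $mD_{\tilde\cX,\mathrm{hor}}$, which has $v_x$-value $0$ at any vertical divisorial valuation, and then the \cite{NicaiseXu} formula applies on the nose (this is essentially the paper's local observation that $(\tilde\cX,D_{\tilde\cX})$ and $(\tilde\cX,\tilde\cX_{k,\redu})$ have the same log structure near the generic point of $\tilde E$). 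With that repair the rest of your argument, including the PL-continuity step, goes through.
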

\begin{proof}
It suffices to prove the equality for the divisorial points in $\Sk(\cX^+)$, since they are dense in the skeleton, and both $\weight_{\omega}(\cdot)$ and $v_{\cdot}(\divisor_{\cX^+}(\omega))$ are continuous functions on the skeleton $\Sk(\cX^+)$.

Let $x$ be a divisorial point in $\Sk(\cX^+)$. If $x$ corresponds to a component of the special fiber $\cX_k$, then the center $\redu_{\cX}(x)$ of $x$ does not contain the closure of any components of $\Delta_X$. Thus, locally around $\redu_{\cX}(x)$ the log schemes $\cX^+$ and $(\cX,\cX_{k,\redu})$ are isomorphic and in particular $\divisor_{\cX^+}(\omega)=\divisor_{(\cX,\cX_{k,\redu})}(\omega)$. The computation in \cite{NicaiseXu}, Section 3.2.2 shows that $\weight_{\omega}(x)= v_x(\divisor_{(\cX,\cX_{k,\redu})}(\omega)) +m$, so we obtain the required equality.

Otherwise, we consider the blow-up $h: \cX'^+ \rightarrow \cX^+$ at the closure of the center $\redu_{\cX}(x)$ of $x$ in $\cX^+$. By \cite{KollarMori}, Lemma 2.45\footnote{The proof in \cite{KollarMori} considers the case of a variety over a field, but it generalizes to varieties of finite type over a discrete valuation ring.} iterating this procedure a finite number of times, we obtain a log-regular model $\cY^+$ such that the $x$ corresponds to a component $E$ of the special fiber $\cY_k$. By reduction to the previous case, it is enough to check that the value $v_x(\divisor_{\cX^+}(\omega))$ does not change under such a blow-up $h$.

Let  $r=r_h +r_v$ be the codimension of $\redu_{\cX}(x)$ in $\cX$, where $r_h$ and $r_v$ are the number of irreducible components of $\overline{\lceil\Delta_X \rceil}$ and respectively of the special fiber $\cX_k$, containing $\redu_{\cX}(x)$. Let $E$ be the exceptional divisor of $h$. We denote the projections onto $S^+$ by $s_{\cX'}:\cX'^+\rightarrow S^+$ and $s_{\cX}:\cX^+ \rightarrow S^+$ and by $\pi$ a uniformizer in $R$. Then we have that
\begin{align*}
h^*(\omega^{\log}_{\cX'^+/S^+})& = h^* \big(\omega_{\cX'/R} \otimes \caO_{\cX'}(\overline{\lceil\Delta_X \rceil}_{\redu} + \cX'_{k,\redu} - s_{\cX'}^*\, \divisor(\pi)) \big) \\
& = \omega_{\cX/R}\otimes \caO_{\cX}((1-r)E +\overline{\lceil\Delta_X \rceil}_{\redu} + r_hE+ \cX_{k,\redu} + (r_v-1)E - s_\cX^*\,\divisor(\pi))= \omega^{\log}_{\cZ^+/S^+}.
\end{align*} It follows that $\divisor_{\cX'^+}(\omega)=h^*(\divisor_{\cX^+}(\omega))$ and this concludes the proof.
\end{proof}

\spa{We recall from \cite{MustataNicaise}, Section 4.7 that the Kontsevich-Soibelman skeleton $\Sk(X, \Delta_X, \omega)$ is the closure in $\text{Bir}(X)$ of the set $\text{Div}(X) $ of divisorial points of $X^\text{an}$ where the weight function $\weight_\omega$ reaches its minimal weight, namely $$\weight_{\omega}(X, \Delta_X) = \inf \{ \weight_{\omega}(x) \,| \, x \in \text{Div}(X) \} \in \R \cup \{ -\infty\}.$$

A priori the weight function associated to a rational pluricanonical form may have minimal weight $-\infty$, hence the corresponding Kontsevich-Soibelman skeleton would be empty. We prove that this does not occur for $\Delta_X$-logarithmic pluricanonical forms.
}

\begin{prop} \label{prop KS inside log-reg model}
Given a $\Delta_X$-logarithmic $m$-pluricanonical form $\omega$, for any log-regular model $\cX^+$ of $X^+$ the inclusion $\Sk(X,\Delta_X,\omega) \subseteq \Sk(\cX^+)$ holds.
\end{prop}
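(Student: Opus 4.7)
The plan is to show that $\weight_{\omega}(x) \geqslant \weight_{\omega}(\rho_{\cX}(x))$ for every divisorial point $x$ in the domain $(\cX_K \setminus D_{\cX, \text{hor}})^{\an}$ of the retraction. Since $\rho_\cX(x) \in \Sk(\cX^+)$ and $\Sk(\cX^+)$ is closed in $X^{\an}$, this inequality implies that the infimum of $\weight_{\omega}$ over divisorial points is attained on $\Sk(\cX^+)$, hence the closure of the minimizing divisorial locus in $\text{Bir}(X)$, i.e.\ the Kontsevich-Soibelman skeleton $\Sk(X, \Delta_X, \omega)$, is contained in $\Sk(\cX^+)$.

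First I would produce, for each divisorial point $x$, a log-regular model $\cY^+$ with a proper birational morphism $h:\cY^+ \to \cX^+$ such that $x$ is a divisorial point of $\Sk(\cY^+)$ corresponding to a component of $\cY_k$; this combines Proposition \ref{kato subdivision} and Proposition \ref{prop morphism induced by subdivision kato fan} with further blow-ups as needed to realize $x$ as a component of the new special fiber. Lemma \ref{lemma log formula for weight function} then yields $\weight_{\omega}(x) = v_x(\divisor_{\cY^+}(\omega)) + m$ and $\weight_{\omega}(\rho_{\cX}(x)) = v_{\rho_{\cX}(x)}(\divisor_{\cX^+}(\omega)) + m$, reducing the desired estimate to the chain $v_x(\divisor_{\cY^+}(\omega)) \geqslant v_x(\divisor_{\cX^+}(\omega)) \geqslant v_{\rho_{\cX}(x)}(\divisor_{\cX^+}(\omega))$.

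The first inequality follows from the relation $\omega^{\log}_{\cY^+/S^+} \simeq h^*\omega^{\log}_{\cX^+/S^+} \otimes \mathcal{O}_{\cY}(K^{\log}_{\cY^+/\cX^+})$, where $K^{\log}_{\cY^+/\cX^+}$ is the log relative canonical divisor, which is effective for log-regular modifications; consequently $\divisor_{\cY^+}(\omega) = h^*\divisor_{\cX^+}(\omega) + mK^{\log}_{\cY^+/\cX^+}$, giving the required inequality. The second is a valuation-theoretic statement: noting that $\divisor_{\cX^+}(\omega) = \divisor_{\cX}(\omega) + mD_{\cX, \redu} - m\divisor_{\cX}(\pi)$ is effective by the assumption on the pole order of $\omega$, it admits a local equation $f \in \mathcal{O}_{\cX, \redu_{\cX}(x)}$. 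Applying Lemma \ref{lemma admissible expansion} with $A = \mathcal{O}_{\cX, \eta}$ and $B = \mathcal{O}_{\cX, \redu_{\cX}(x)}$, where $\eta$ is the stratum generic point determined by $\rho_\cX(x)$, yields an admissible expansion $f = \sum c_\gamma \gamma$ with $c_\gamma \in \mathcal{O}_{\cX, \redu_{\cX}(x)}$, so $v_x(c_\gamma) \geqslant 0$. Combining this with $v_x(\gamma) = \alpha(\overline{\gamma})$, where $\rho_{\cX}(x) = (\eta, \alpha)$, and invoking Remark \ref{rem def-val}, we obtain $v_x(f) \geqslant \min_{\gamma} \alpha(\overline{\gamma}) = v_{\rho_{\cX}(x)}(f)$.

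The main obstacle is establishing the effectivity of the log relative canonical divisor $K^{\log}_{\cY^+/\cX^+}$, which I would verify by local chart computations analogous to those in the proof of Lemma \ref{lemma log formula for weight function}, or by invoking general results on log differentials for log-regular schemes in characteristic zero. A secondary concern is ensuring that the infimum cannot be approached by divisorial points in the horizontal log locus outside the domain of $\rho_\cX$; this is controlled by the constraints on the pole order of $\omega$ along $\Delta_X$, so such points contribute infinite weight and cannot minimize.
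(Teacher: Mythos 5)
Your overall strategy matches the paper's: reduce to the inequality $\weight_\omega(x) \geqslant \weight_\omega(\rho_\cX(x))$ for a divisorial point $x$, after realizing $x$ as a divisor on an auxiliary log-regular $\cY^+$ dominating $\cX^+$. Your decomposition is slightly different (one pullback step plus one minimality step, rather than the paper's iterated step-by-step comparison along a chain of blow-ups), and that decomposition would work. But there is a genuine logical gap.

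The main problem is that the non-strict inequality $\weight_\omega(x) \geqslant \weight_\omega(\rho_\cX(x))$ is not enough to conclude $\Sk(X,\Delta_X,\omega) \subseteq \Sk(\cX^+)$. It shows that the infimum of $\weight_\omega$ over divisorial points is \emph{also} achieved somewhere on $\Sk(\cX^+)$ (by density of divisorial points in the skeleton and continuity of $\weight_\omega$), but it does not rule out a divisorial point $x \notin \Sk(\cX^+)$ with $\weight_\omega(x) = \weight_\omega(\rho_\cX(x))$ equal to the minimum; such an $x$ would lie in the Kontsevich-Soibelman skeleton but outside $\Sk(\cX^+)$. The paper avoids this by proving explicitly that the weight is \emph{strictly} increasing at any step in the blow-up chain whose center is not a stratum of the boundary, and that at least one such step occurs whenever $x \notin \Sk(\cX^+)$ (since otherwise $\Sk(\cY^+)$ would be a subdivision of $\Sk(\cX^+)$, forcing $x \in \Sk(\cX^+)$). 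Your first inequality would need to be sharpened to strictness in the case $x \notin \Sk(\cX^+)$, which amounts to showing that $v_x\big(K^{\log}_{\cY^+/\cX^+}\big) > 0$, i.e.\ that the center of $v_x$ on $\cY$ meets an exceptional divisor with positive log discrepancy.

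A secondary issue: you assert that $\divisor_{\cX^+}(\omega)$ is effective "by the assumption on the pole order of $\omega$", but the pole bound $ma_i$ along $\Delta_{X,i}$ is a purely horizontal constraint. It guarantees non-negative multiplicity along the horizontal components of $D_\cX$, but says nothing a priori about the vertical components of $\divisor_{\cX^+}(\omega)$, which could in principle be negative. This does not break your argument, because the potentially negative part is supported on $D_\cX$, and along $D_\cX$ the valuations $v_x$ and $v_{\rho_\cX(x)}$ agree by construction of the retraction; so you should split $\divisor_{\cX^+}(\omega)$ into a $D_\cX$-supported part (on which equality holds) and an effective remainder (to which the minimality of $v_{\rho_\cX(x)}$ applies). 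But you should not claim effectivity from the pole-order hypothesis alone. Finally, the "secondary concern" in your last sentence is moot: divisorial points are birational points by definition, and $\text{Bir}(X) \subseteq (\cX_K \setminus D_{\cX,\text{hor}})^{\an}$, so every divisorial point lies in the domain of $\rho_\cX$.
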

\begin{proof}
Let $\cX^+$ be a log-regular model of $X^+$ and let $y$ be a divisorial point of $X^\an$. It suffices to prove that $$\weight_{\omega}(y) \geqslant \weight_{\omega}(\rho_\cX(y))$$ and that the equality holds if and only if $y$ is in $\Sk(\cX^+)$. As in the proof of Lemma \ref{lemma log formula for weight function}, we consider the blow-up of $\cX^+$ at the closure of $\redu_{\cX}(y)$: iterating this procedure a finite number of times, we obtain a log-regular model $\cY^+$ such that $y \in \Sk(\cY^+)$.

Let $h: \cZ^+ \rightarrow \cW^+$ be a morphism of this sequence. If $\overline{ \{\redu_{\cW}(y)\}}$ is a stratum of $D_\cW$, then the morphism $h$ induces a subdivision of the skeleton $\Sk(\cW^+)$, so $\rho_\cZ(y)= \rho_{\cW}(y)$.

Otherwise, $\overline{ \{\redu_{\cW}(y)\}}$ is strictly contained in a stratum $V$ of $\D_\cW$. Let $E$ be the exceptional divisor of $h$, $r=r_h +r_v$ be the codimension of $V$ in $\cW$, where $r_h$ and $r_v$ are the number of irreducible components of $\overline{\lceil\Delta_X \rceil}$ and respectively of the special fiber $\cW_k$, containing $V$. Let $r+j$ be the codimension of $\overline{ \{\redu_{\cW}(y)\}}$, where $j \geqslant 1$. We denote the projections onto $S^+$ by $s_{\cW}:\cW^+\rightarrow S^+$ and $s_{\cZ}:\cZ^+ \rightarrow S^+$ and by $\pi$ a uniformizer in $R$. Then we have that
\begin{align*}
h^*&(\omega^{\log}_{\cW^+/S^+}) = h^* \big(\omega_{\cW/R} \otimes \caO_{\cW}(\overline{\lceil\Delta_X \rceil}_{,\redu} + \cW_{k,\redu} - s_\cW^*\, \divisor(\pi)) \big) \\
& = \omega_{\cZ/R}\otimes \caO_{\cZ}((1-r-j)E +\overline{\lceil\Delta_X \rceil}_{,\redu} + r_hE+ \cZ_{k,\redu} + (r_v-1)E - s_\cZ^*\,\divisor(\pi))\\
& = \omega^{\log}_{\cZ^+/S^+} \otimes \caO_{\cZ}(-j E).
\end{align*} It follows that $\divisor_{\cZ^+}(\omega)= h^*(\divisor_{\cW^+}(\omega)) + mjE$, so
\begin{align*}
v_{\rho_\cZ(y)}(\divisor_{\cZ^+}(\omega))& = v_{\rho_\cZ(y)}\big(h^*(\divisor_{\cW^+}(\omega)) + mjE\big)  \\ & \geqslant v_{\rho_\cW(y)}(\divisor_{\cW^+}(\omega)) + mj v_{\rho_\cW(y)}(E) \\ & > v_{\rho_\cW(y)}(\divisor_{\cW^+}(\omega))
\end{align*} where for the first inequality we apply \cite{MustataNicaise}, Proposition 3.1.6, while the second strict inequality holds as $j >0$ and  $v_{\rho_\cW(y)}(E) >0 $ since the center of the valuation $\rho_\cW(y)$ is contained in $E$. Therefore, for any such morphism $h$, the weight is strictly increasing, namely $\weight_{\omega}(\rho_\cZ(y)) > \weight_{\omega} (\rho_\cW(y))$. This concludes the proof.
\end{proof}

Only the components of $\Delta_X$ with coefficient $a_i=1$ determine strata that are contained in the Kontsevich-Soibelman skeletons. The introduction of $\Delta_X$-log pluricanonical forms allows us to construct non-empty Kontsevich-Soibelman skeletons even for varieties with Kodaira dimension $-\infty$, as in the following examples:

\begin{example}
Let $X$ be the projective line $\mathbb{P}^1_K$ with affine coordinates $x$ and $y$, and $\Delta_X=(0:1) + (1:0)$. Then $a_i=1$ for any $i$ and there exist $\Delta_X$-logarithmic canonical forms. For example, we consider $$\omega =  \frac{dx}{x}= -\frac{dy}{y}.$$ Let $\cX= \mathbb{P}_R^1$ and $D_\cX= (0:1) + (1:0)+ \mathbb{P}^1_k$. The log scheme $\cX^+=(\cX,D_\cX)$ is a log-regular model of $X^+=(X,\lceil \Delta_X \rceil)$ and the associated skeleton $\Sk(\cX^+)$ looks like this:
\begin{center}
\begin{tikzpicture}[scale=0.3]
\draw[gray, thick] (-1,1) -- (-1,-3);
\draw[gray, thick] (-5,0) -- (0,0);
\draw[gray, thick] (-5,-2) -- (0,-2);
\node[above] at (-6.5,-0.5) {$(0:1)$};
\node[above] at (-6.5,-2.5) {$(1:0)$};
\node[above] at (-0.2,0.5) {$\mathbb{P}{^1_k}$};
\node[above] at (-3,2) {$\mathcal{D}(D_\cX)$};
\draw[gray, thick] (8,-1) -- (12,-1) -- (16,-1);
\draw [gray, thick, dashed] (7,-1) -- (8,-1);
\draw [gray, thick, dashed] (16,-1) -- (17.5,-1);
\filldraw[black] (12,-1) circle (4pt) node[above] {$v_{\mathbb{P}^1_k}$};
\node[above] at (12,2) {$\Sk(\cX^+)$};
\end{tikzpicture}
\end{center}
Since $\divisor_{\cX^+}(\omega)= 0$, the weight associated to $\omega$ is minimal at any point of the skeleton $\Sk(\cX^+)$. Thus $\Sk(X,\Delta_X,\omega)=\Sk(\cX^+) \simeq \R$.
\end{example}
\begin{example}
Let $X=\mathbb{P}^1_K$ and $\Delta_X=\frac{2}{3}(0:1) + \frac{2}{3}(1:0)+\frac{2}{3}(1:1)$. So $a_i=\frac{2}{3}$ for any $i$ and there exist $\Delta_X$-logarithmic $3$-pluricanonical forms. We set $$\omega =  \frac{1}{(x-1)^2} \cdot \frac{1}{x^2} (dx)^3= -\frac{1}{(1-y)^2}\cdot \frac{1}{y^2} (dy)^3.$$ We consider $\cX= \mathbb{P}_R^1$ and $D_\cX= (0:1) + (1:0)+ (1:1)+ \mathbb{P}^1_k$, then $\cX^+=(\cX,D_\cX)$ is a log-regular model of $X^+=(X,\lceil \Delta_X \rceil)$ and $\Sk(\cX^+)$ is \begin{center}
\begin{tikzpicture}[scale=0.3]
\draw[gray, thick] (-1,1) -- (-1,-4);
\draw[gray, thick] (-5,0) -- (0,0);
\draw[gray, thick] (-5,-1.5) -- (0,-1.5);
\draw[gray, thick] (-5,-3) -- (0,-3);
\node[above] at (-6.5,-0.5) {$(0:1)$};
\node[above] at (-6.5,-2) {$(1:1)$};
\node[above] at (-6.7,-3.7) {$(1:0)$};
\node[above] at (-0.2,0.5) {$\mathbb{P}{^1_k}$};
\node[above] at (-3,2) {$\mathcal{D}(D_\cX)$};
\draw[gray, thick] (8,1) -- (12,-1);
\draw[gray, thick] (16,1) -- (12,-1);
\draw[gray, thick] (12,-4) -- (12,-1);
\draw [gray, thick, dashed] (7,1.5) -- (8,1);
\draw [gray, thick, dashed] (16,1) -- (17,1.5);
\draw [gray, thick, dashed] (12,-4) -- (12,-5);
\filldraw[black] (12,-1) circle (4pt) node[above] {$v_{\mathbb{P}^1_k}$};
\node[above] at (12,2) {$\Sk(\cX^+)$};
\end{tikzpicture}
\end{center}
Since $\divisor_{\cX^+}(\omega)= (0:1) + (1:0)+ (1:1)$, the weight associated to $\omega$ is minimal at the divisorial point $v_{\mathbb{P}_k^1}$ corresponding to $\mathbb{P}^1_k$ and is strictly increasing with slope $1$ along the unbounded edges, when we move away from the point $v_{\mathbb{P}^1_k}$. Therefore, $\Sk(X,\Delta_X,\omega)=\{v_{\mathbb{P}^1_k}\}$.
\end{example}

\subsection{Weight function and Kontsevich-Soibelman skeleton for products.} \label{paragraph weight function product}
\spa{Let $\cX^+$ and $\cY^+$ be log-smooth models over $S^+$ of $X^+=(X,\lceil\Delta_X\rceil)$ and $Y^+=(Y,\lceil\Delta_Y\rceil)$ respectively. Then, the $fs$ fibred product $\cZ^+=\cX^+  \times^{\text{fs}}_{S^+} \cY^+$ is a log-regular model of $Z^+:=X^+ \times^{\text{fs}}_K Y^+$. Therefore, given $\omega_{X^+}$ and $\omega_{Y^+}$ $\Delta_X$-logarithmic and $\Delta_Y$-logarithmic $m$-pluricanonical forms on $(X,\Delta_X)$ and $(Y,\Delta_Y)$ respectively, the form $$\varpi=\pr_{X^+}^* \,\omega_{X^+} \otimes \pr_{Y^+}^* \,\omega_{Y^+}$$ is a $\Delta_Z$-logarithmic $m$-pluricanonical form on $(Z,\Delta_Z)$, where $\Delta_Z= X \times_K \Delta_Y + \Delta_X \times_K Y$. Viewing these forms as rational sections of logarithmic $m$-pluricanonical bundles, we see that $\divisor_{\cZ^+}(\varpi)=\pr_{\cX^+}^* \big(\divisor_{\cX^+}(  \omega_{X^+})\big) +  \pr_{\cY^+}^* \big(\divisor_{\cY^+} (\omega_{Y^+})\big)$ according to (\ref{equ log can bundles}).}

\spa{Let $z$ be a point of $F_{\cZ} \cap \cZ_k$; as before, we denote by $x$ and $y$ the images of $z$ under the local isomorphism $F_{\cZ} \rightarrow F_{\cX} \times_{F_S} F_{\cY}$. Any morphism $\varepsilon \in \sigma_z$ defines a point $v_{z,\varepsilon}$ in $\Sk(\cZ^+)$. For the sake of convenience, we simply denote the valuations by the corresponding morphism and we denote $\alpha= \pr_{\cX}(\varepsilon)$ and $\beta=\pr_{\cY}(\varepsilon)$. We aim to relate the valuation $v_{\varepsilon}\big(\divisor_{\cZ^+}(\varpi)\big)$ to the values $$v_{\alpha}\big(\divisor_{\cX^+}(\omega_{X^+}\big)) \text{ , } v_{\beta}\big(\divisor_{\cY^+}(\omega_{Y^+})\big).$$}

\spa{Let $f_x \in \caO_{\cX,x}$ be a local equation of $\divisor_{\cX^+}(\omega_{X^+})$ around $x$. In order to evaluate $v_{x,\alpha}$ on $f_x$, we consider an admissible expansion of $f_x$ as in (\ref{eq-adm}) $$f_x=\sum_{\gamma\in \caC_{\cX,x}}c_\gamma \gamma.$$ Furthermore, this expansion induces also an expansion of $\pr_{\cX}^*(f_x)$ by  $$\pr_{\cX}^*(f_x)=\sum_{\gamma\in \caC_{\cX,x} }\pr^*_{\cX}(c_\gamma) \gamma$$ as formal power series in $\widehat{\caO}_{\cZ,z}$, since the morphism of characteristic sheaves $\mathcal{C}_{\cX,x} \hookrightarrow \mathcal{C}_{\cZ,z}$ is injective. Following the same procedure for a local equation $f_y \in \caO_{\cY,y}$ of $\divisor_{\cY^+}(\omega_{Y^+})$ around $y$, we get an expansion of $f_y$ that extends to $\pr_{\cY}^*(f_y)$: 
$$f_y=\sum_{\delta\in \caC_{\cY,y}}d_\delta \delta.$$}

\spa{A local equation of $\varpi$ around $z$ is determined by $\pr_{\cX}^*(f_x)\,\,\pr_{\cY}^*(f_y)$. Thus $$v_{\varepsilon}(\divisor_{\cZ^+}(\varpi))
= v_{\varepsilon}\big(\pr_{\cX}^*(f_x) \,\,\pr_{\cY}^*(f_y)\big)$$ and by multiplicativity of the valuation $v_{\varepsilon}$ $$v_{\varepsilon}\big(\pr_{\cX}^*(f_x) \,\,\pr_{\cY}^*(f_y)\big) = v_{\varepsilon}\big(\pr_{\cX}^*(f_x)\big) + v_{\varepsilon}\big(\pr_{\cY}^*(f_y)\big).$$ Recalling Remark \ref{rem def-val}, the valuation can be computed as follows $$ v_{\varepsilon}\big(\pr_{\cX}^*(f_x)\big) = \min\{\varepsilon(\gamma)\,|\,c_\gamma \neq 0\};$$ as the elements $\gamma$ belong to $\caC_{\cX,x}$ and $\alpha$ is defined to be $\pr_{\cX}(\varepsilon)$, we have $$\min\{\varepsilon(\gamma)\,|\,c_\gamma \neq 0\} = \min\{\alpha(\gamma)\,|\,c_\gamma \neq 0\} =  v_{x,\alpha}(f_x).$$ Hence, we conclude that \begin{align} \label{equ valuations product}
\begin{split}
v_{\varepsilon}\big(\divisor_{\cZ^+}(\varpi)\big)
& = v_{\varepsilon}\big(\pr_{\cX}^*(f_x)\big) + v_{\varepsilon}\big(\pr_{\cY}^*(f_y)\big)\\
& = v_{\alpha}(f_x) + v_{\beta}(f_y) \\
& = v_{\alpha}\big(\divisor_{\cX^+}(\omega_{X^+})\big) + v_{\beta}\big(\divisor_{\cY^+}(\omega_{Y^+})\big).
\end{split}
\end{align}
}

\spa{This result turns out to be advantageous to compute the weight function $\weight_\varpi$ on divisorial points of $\Sk(\cZ^+)$: 
\begin{align} \label{equ weight function product}
\begin{split}
\weight_\varpi(\varepsilon)
& = v_{\varepsilon}\big(\divisor_{\cZ^+}(\varpi)\big) +m \\
& = v_{\alpha}\big(\divisor_{\cX^+}(\omega_{X^+})\big) + v_{\beta}\big(\divisor_{\cY^+}(\omega_{Y^+})\big) + m \\
& = \weight_{\omega_{X^+}}(\alpha) + \weight_{\omega_{Y^+}}(\beta) - m.
\end{split}
\end{align}
}

\spa{Under the notations of the previous paragraphs, our computations lead to the following result.
\begin{thm}  \label{thm semistability and KS skeletons}
Suppose that the residue field $k$ is algebraically closed and that $\cX^+$ is semistable. Then, the PL homeomorphism of skeletons $$\Sk(\cZ^+) \xrightarrow{\sim} \Sk(\cX^+) \times \Sk(\cY^+)$$ given in Proposition \ref{prop semistability and skeletons} restricts to a PL homeomorphism of Kontsevich-Soibelman skeletons $$\Sk(Z,\Delta_Z, \varpi) \xrightarrow{\sim} \Sk(X,\Delta_X, \omega_{X^+}) \times \Sk(Y,\Delta_Y, \omega_{Y^+}).$$ 
\end{thm}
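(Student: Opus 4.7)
The plan is to combine the PL homeomorphism $\Phi\colon \Sk(\cZ^+) \xrightarrow{\sim} \Sk(\cX^+) \times \Sk(\cY^+)$ from Proposition \ref{prop semistability and skeletons} with the additive decomposition of the weight function established in formula (\ref{equ weight function product}), and then check that minimality loci and their closures match up. First, by Proposition \ref{prop KS inside log-reg model} all three Kontsevich–Soibelman skeletons sit inside the corresponding log-regular skeletons, so there is no loss in carrying out the entire argument inside $\Sk(\cZ^+)$ and $\Sk(\cX^+)\times\Sk(\cY^+)$.

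Next, I would record that the identity
\[
\weight_{\varpi}(\varepsilon) \;=\; \weight_{\omega_{X^+}}(\alpha) + \weight_{\omega_{Y^+}}(\beta) - m,
\]
with $(\alpha,\beta)=\Phi(\varepsilon)$, holds not just on divisorial points but on all of $\Sk(\cZ^+)$: the computation in paragraphs \ref{paragraph weight function product} onwards only uses Proposition \ref{prop-val} and Remark \ref{rem def-val}, which apply to every morphism of monoids in some $\sigma_z$. Taking infima over the respective skeletons, and using that $\Phi$ is surjective, one obtains
\[
\weight_{\varpi}(Z,\Delta_Z) \;=\; \weight_{\omega_{X^+}}(X,\Delta_X) + \weight_{\omega_{Y^+}}(Y,\Delta_Y) - m,
\]
and the minimality locus of $\weight_{\varpi}$ inside $\Sk(\cZ^+)$ is carried by $\Phi$ bijectively onto the product of the minimality loci of $\weight_{\omega_{X^+}}$ and $\weight_{\omega_{Y^+}}$ inside $\Sk(\cX^+)\times\Sk(\cY^+)$.

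Finally, to pass from minimality loci back to Kontsevich–Soibelman skeletons, I would use that each weight function is piecewise affine on the corresponding skeleton (as in \cite{MustataNicaise}), so each minimality locus is a union of closed faces. Divisorial points are dense in every face of a log-regular skeleton, hence dense in each minimality locus; therefore the closure of divisorial minima coincides with the minimality locus itself. Since $\Phi$ is a PL homeomorphism, it sends the minimality locus in $\Sk(\cZ^+)$ PL-homeomorphically onto the product of minimality loci, and the restriction to Kontsevich–Soibelman skeletons gives the claimed $\Sk(Z,\Delta_Z,\varpi) \xrightarrow{\sim} \Sk(X,\Delta_X,\omega_{X^+}) \times \Sk(Y,\Delta_Y,\omega_{Y^+})$. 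The only delicate point I expect is the last one, namely verifying that the closure of the divisorial part of the minimality locus inside $\text{Bir}$ equals the full minimality locus inside the log-regular skeleton; but this is exactly where the PL structure of the weight function on $\Sk(\cZ^+)$, together with density of divisorial points on each face, does the job.
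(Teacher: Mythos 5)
Your proposal is correct and follows essentially the same route as the paper: the paper's one-line proof simply invokes the additive decomposition $\weight_\varpi(\varepsilon) = \weight_{\omega_{X^+}}(\alpha) + \weight_{\omega_{Y^+}}(\beta) - m$ to conclude that minimality holds iff it holds componentwise, which is exactly your core step. The additional points you flag (that the formula extends from divisorial points to all of $\Sk(\cZ^+)$, and that the closure of the divisorial minimality locus inside $\mathrm{Bir}$ agrees with the full minimality locus via PL structure and density) are details the paper treats as implicit; they are genuine but unproblematic, and filling them in is sound.
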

\begin{proof}
This follows immediately from the equality (\ref{equ weight function product}) that shows that a point in $\Sk(Z,\Delta_Z)$ has minimal value $\weight_{\varpi}(Z,\Delta_Z)$ if and only if its projections have minimal value $\weight_{\omega_{X^+}}(X,\Delta_X)$ and $\weight_{\omega_{Y^+}}(Y,\Delta_Y)$.
\end{proof}
}

\section{The essential skeleton of a product}\label{sec birational}
We will need a few notions from birational geometry, see \cite{KollarMori}. Let $(X,\Delta_X)$ be a pair over $K$ such that $X$ is normal, $\Delta_X$ is an effective divisor, and $K_X+\Delta_X$ is $\mathbb{Q}$-Cartier. Then we say that $(X,\Delta_X)$ is \emph{log canonical} if for every log resolution $f \colon Z \to X$, in the formula

\[
K_Z+\Delta_Z = f^*(K_X+\Delta_X)+\sum a_D D,
\]
where $\Delta_Z$ is the strict transform of $\Delta_X$ plus the reduced exceptional divisor, all the $a_D$ are non-negative. The sum ranges over all components of $\Delta_Z$. In fact the quantity $a_D$, called the \emph{log discrepancy} of $D$ with respect to $(X,\Delta_X)$, depends only on the valuation corresponding to $D$, and this condition needs only be tested on a single log resolution.

We will be most interested in the case where $(X,\Delta_X)$ satisfies the stronger condition of being \emph{divisorially log terminal}, or dlt. A closed subset $Y \subset X$ is called a \emph{log canonical center} if for some (respectively any) log resolution, $Y$ is the image of a divisor $D$ with $a_D=0$. The pair $(X,\Delta_X)$ is said to be dlt if for every log canonical center $Y$, there is a neighborhood of the generic point of $Y$ where $(X,\Delta_X)$ is snc.

We say that a pair $(X,\Delta)$ is \emph{Kawamata log terminal}, or klt, if it is dlt and the coefficients of $\Delta$ are all strictly less than $1$.

\subsection{Essential skeleton of a pair.}
\spa{Let $(X,\Delta_X)$ be a pair such that $X^+=(X,\lceil \Delta_X \rceil)$ is a log-regular log scheme. Let $\omega$ be a non-zero regular $\Delta_X$-logarithmic $m$-pluricanonical form on $(X,\Delta_X)$. Let $\cX^+$ be a log-smooth model of $X^+$ over $S^+$. There exist minimal positive integers $d$ and $n$ such that the divisor $\divisor_{\cX^+}(\omega^{\otimes d} \pi^{-n})$ is effective and the multiplicity of some component of the special fiber is zero: we denote this divisor by $D_{\min}(\cX,\omega)$. It follows from the properties of the weight function (see \cite{MustataNicaise}, Proposition 4.5.5) that for any $x \in \Sk(\cX^+)$ $$v_x(D_{\min}(\cX,\omega)) + dm = \weight_{\omega^{\otimes d}\pi^{-n}}(x)= d \cdot \weight_{\omega}(x) + v_x(\pi^{-n}) = d \cdot \weight_{\omega}(x) -n.$$
}
\begin{lemma} \label{lemma characterization minimal points}
Let $v_{x,\alpha}$ be a divisorial point in $\Sk(\cX^+)$, then $v_{x,\alpha} \in \Sk(X,\Delta_X, \omega)$ if and only if $D_{\min}(\cX,\omega)$ does not contain $x$.
\end{lemma}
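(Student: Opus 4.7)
The plan is to recast the displayed identity just before the statement as an affine expression for the weight function,
$$\weight_\omega(v) = \frac{v(D_{\min}(\cX,\omega))}{d} + m + \frac{n}{d}$$
valid for every $v \in \Sk(\cX^+)$, so that minimizing $\weight_\omega$ over divisorial points is the same as minimizing $v \mapsto v(D_{\min}(\cX,\omega))$ over those points. It then suffices to pin down the points of $\Sk(\cX^+)$ where this latter function vanishes, and to verify that this vanishing locus is precisely the minimizing locus of $\weight_\omega$ among all divisorial points of $X^{\mathrm{an}}$.

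For the second step I would show that the minimum of $v \mapsto v(D_{\min}(\cX,\omega))$ over $\Sk(\cX^+) \cap \mathrm{Div}(X)$ is exactly $0$. Effectivity of $D_{\min}(\cX,\omega)$ gives $v(D_{\min}(\cX,\omega)) \geqslant 0$ for every $v \in \Sk(\cX^+)$. Conversely, by the minimality of $n$ in the definition of $D_{\min}(\cX,\omega)$, there exists an irreducible component $E$ of $\cX_k$ with multiplicity zero in $D_{\min}(\cX,\omega)$; the corresponding normalized divisorial valuation is a vertex of $\Sk(\cX^+)$ where $v(D_{\min}(\cX,\omega)) = 0$. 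Combined with Proposition \ref{prop KS inside log-reg model}, which forces $\Sk(X,\Delta_X,\omega) \subseteq \Sk(\cX^+)$, this identifies the global infimum $\weight_\omega(X,\Delta_X)$ as $m + n/d$, and hence gives the equivalence
$$v_{x,\alpha} \in \Sk(X,\Delta_X,\omega) \iff v_{x,\alpha}(D_{\min}(\cX,\omega)) = 0.$$

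Finally, I would translate this vanishing condition into the geometric statement about $x$. Assuming, as we may, that $\alpha$ lies in the relative interior of $\sigma_x$, so that $v_{x,\alpha}$ has center $x$, choose a local equation $f \in \caO_{\cX,x}$ of $D_{\min}(\cX,\omega)$ at $x$ and consider an admissible expansion $f = \sum_{\gamma \in \caC_{\cX,x}} c_\gamma \gamma$ as in \eqref{eq-adm}. If $x \notin \mathrm{Supp}\, D_{\min}(\cX,\omega)$, then $f$ is a unit in $\caO_{\cX,x}$, its expansion contains the trivial monomial $\gamma = 1$, and Proposition \ref{prop-val} together with Remark \ref{rem def-val} yields $v_{x,\alpha}(f) = \alpha(1) = 0$. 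If instead $x \in \mathrm{Supp}\,D_{\min}(\cX,\omega)$, then $f \in \mm_x = \caI_{\cX,x}$ forces every $\gamma$ occurring in the expansion, and in particular every $\gamma \in \Gamma_x(f)$, to be nontrivial in $\caC_{\cX,x}$; since $\alpha$ is interior to $\sigma_x$, one has $\alpha(\gamma) > 0$ for each such $\gamma$, and therefore $v_{x,\alpha}(f) > 0$.

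The main obstacle, I expect, is precisely this last step: the divisor $D_{\min}(\cX,\omega)$ may have components not contained in $D_\cX$ (arising from zeros of $\omega$ on the generic fiber), so a local equation need not live in $\caM_{\cX,x}$. The argument nevertheless goes through because the only facts actually used are that $f \in \mm_x$ kills the constant term of any admissible expansion and that every nontrivial element of $\caC_{\cX,x}$ is positively valued by an interior $\alpha$; both follow directly from the admissible expansion formalism developed for Proposition \ref{prop-val}.
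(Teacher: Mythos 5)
Your proof is correct, and it takes a genuinely different route from the paper's in the key final step. The first two stages (rewriting $\weight_\omega$ affinely in terms of $v(D_{\min})$, and identifying the minimum as $m+n/d$ using effectivity plus the multiplicity-zero component of $\cX_k$ guaranteed by minimality of $n$) match the paper in substance, though the paper is terse about why the minimum of $v_\alpha(D_{\min})$ is exactly zero --- you make this explicit, which is a genuine improvement in clarity. Where you diverge is in showing $v_{x,\alpha}(D_{\min}) = 0 \iff x \notin \operatorname{Supp} D_{\min}(\cX,\omega)$: the paper extracts a divisor $E$ realizing $v_{x,\alpha}$ by an iterated log blow-up $h\colon\cY^+\to\cX^+$ at strata, uses the log-crepant identity $h^*(\divisor_{\cX^+}(\omega^{\otimes d}\pi^{-n}))=\divisor_{\cY^+}(\omega^{\otimes d}\pi^{-n})$ established in the proof of Proposition \ref{prop KS inside log-reg model}, and then reads off the multiplicity of $E$; you instead compute $v_{x,\alpha}$ directly on a local equation $f\in\caO_{\cX,x}$ of $D_{\min}$ via the admissible-expansion formula \eqref{eq:val} of Proposition \ref{prop-val}, observing that $f\in\mm_x$ kills the trivial monomial and that interior $\alpha$ is strictly positive on every nontrivial $\gamma\in\caC_{\cX,x}$. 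Your argument is more elementary and self-contained, as it avoids redeploying the blow-up machinery, and it correctly handles the issue you flag, namely that $D_{\min}$ may have components outside $D_\cX$ so $f$ need not lie in $\caM_{\cX,x}$ --- the formula of Proposition \ref{prop-val} applies to all of $\caO_{\cX,x}$, so this is no obstacle. One thing worth noting: you are right to say ``as we may'' assume $\alpha$ is in the relative interior of $\sigma_x$; the lemma as stated only makes sense with the canonical representative (boundary $\alpha$ would require replacing $x$ by the actual center $y$), and the paper's proof carries the same implicit convention when it writes $x\in D_{\min}(\cX,\omega)$ at the end, so your explicit flag of this hypothesis is a slight tightening rather than an extra assumption.
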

\begin{proof}
We denote $v_{x,\alpha}$ simply by $\alpha$. By the above series of equalities, the weight function $\weight_{\omega}$ reaches its minimum at $\alpha$ if and only if $v_\alpha(D_{\min}(\cX,\omega))$ is minimal, hence in particular equal to zero.

Let $h: \cY^+ \rightarrow \cX^+$ be a sequence of blow-up morphisms of strata of $D_\cX$ such that $\alpha$ corresponds to an irreducible component $E$ of $D_\cY$. As in the proof of Proposition \ref{prop KS inside log-reg model} $$h^*(D_{\min}(\cX,\omega))=h^*(\divisor_{\cX^+} (\omega^{\otimes d} \pi^{-n}))= \divisor_{\cY^+} (\omega^{\otimes d} \pi^{-n}).$$ Therefore we have that $v_\alpha(D_{\min}(\cX,\omega)) >0$ if and only if $E \subseteq \divisor_{\cY^+} (\omega^{\otimes d} \pi^{-n})$, and this holds if and only if $x \in D_{\min}(\cX,\omega)$.
\end{proof}

\spa{We define the essential skeleton $\Sk(X,\Delta_X)$ of $(X,\Delta_X)$ as the union of all Kontsevich-Soibelman skeletons $\Sk(X, \Delta_X,\omega)$, where $\omega$ ranges over all regular $\Delta_X$-logarithmic pluricanonical forms. In the case of an empty boundary, this recovers the notions introduced in \cite{MustataNicaise}.

The further reason to define the essential skeleton this way is that it behaves nicely under birational morphisms. Let $f \colon X'\to X$ be a log resolution. Then there is a $\mathbb{Q}$-divisor $\Gamma'$ with snc support, and no coefficient exceeding $1$, such that $K_{X'}+\Gamma' = f^*(K_X+\Delta_X)$. Take $\Delta_{X'}$ to be the positive part of $\Gamma'$ and write $$K_{X'}+\Delta_{X'}= f^*(K_{X}+\Delta_X)+N.$$ For any $m$, pullback along with multiplication by the divisor of discrepancies $N$ induces an isomorphism of vector spaces \begin{equation} \label{equ isomo forms log resolution}
H^0(X, mK_{X}+m\Delta_X) \cong H^0(X', mK_{X'}+m\Delta_{X'}).
\end{equation} Let $\omega$ and $\omega'$ be corresponding forms via this isomorphism.}
\begin{prop} \label{prop birational invariance essential skeleton for pairs}
Under the identification of the birational points of $X$ with those of $X'$, $\Sk(X, \Delta_X,\omega)$ is identified with $\Sk(X', \Delta_{X'},\omega')$.
\end{prop}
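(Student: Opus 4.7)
The plan is to show that the weight functions $\weight_\omega$ and $\weight_{\omega'}$ coincide on every divisorial valuation of the common function field $K(X) = K(X')$. Since birational points of $X$ are canonically identified with those of $X'$ via this equality of function fields (induced by the birational map $f$), and since the Kontsevich--Soibelman skeletons are defined as closures in $\mathrm{Bir}(\cdot)$ of the divisorial points where the respective weight function attains its minimum, equality of weight functions on divisorial points implies the identification of skeletons.

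First I would unpack the identification \eqref{equ isomo forms log resolution}: the correspondence $\omega \leftrightarrow \omega'$ is given by pullback along $f$ (interpreted as a section of the larger log pluricanonical bundle on $X'$ thanks to $N \geqslant 0$). The relation $K_{X'}+\Delta_{X'} = f^*(K_X+\Delta_X) + N$ then guarantees that $\divisor(f^*\omega) + m\Delta_{X'} = f^*(\divisor(\omega) + m\Delta_X) + mN \geqslant 0$, so $f^*\omega$ is the desired regular section of $m(K_{X'}+\Delta_{X'})$ corresponding to $\omega$.

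Next, for a divisorial valuation $v$ on $K(X) = K(X')$, I would pass to a common log-smooth model: take log-smooth models $\cX^+$ of $X^+$ and $\cX'^+$ of $X'^+$, resolve the induced rational map, and blow up further (using Section \ref{section resolution via kato subd}) to obtain a scheme $\cZ$ over $R$ supporting two divisorial log structures $D_{\cZ,X} = \overline{\lceil \Delta_X\rceil} + (\cZ_k)_{\redu}$ and $D_{\cZ,X'} = \overline{\lceil \Delta_{X'}\rceil} + (\cZ_k)_{\redu}$, making $\cZ^+_X$ and $\cZ^+_{X'}$ log-smooth models of $X^+$ and $X'^+$ respectively, with the center of $v$ appearing as a stratum component. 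Lemma \ref{lemma log formula for weight function} then gives
\[
\weight_\omega(v) - \weight_{\omega'}(v) = v\bigl(\divisor_{\cZ^+_X}(\omega)\bigr) - v\bigl(\divisor_{\cZ^+_{X'}}(\omega')\bigr),
\]
and it remains to show that the right-hand side vanishes. The log canonical bundles of $\cZ^+_X$ and $\cZ^+_{X'}$ differ precisely by the horizontal divisor $D_{\cZ,X'} - D_{\cZ,X}$, consisting of those $f$-exceptional components appearing in $\lceil \Delta_{X'}\rceil$ but not in $\lceil \Delta_X \rceil$. The rational forms $\omega$ and $\omega' = f^*\omega$ agree on the generic fiber, so their two logarithmic divisors differ only by contributions from the $f$-exceptional horizontal locus, and these are accounted for exactly by the discrepancy divisor $N$ via the relation above.

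The main obstacle is this component-by-component bookkeeping. An irreducible horizontal component of $D_\cZ$ may lie in $\lceil \Delta_X \rceil$, in $\lceil \Delta_{X'}\rceil \setminus \lceil \Delta_X \rceil$, in the support of $N$, or in none of these, and its local contribution to each of $\divisor_{\cZ^+_X}(\omega)$ and $\divisor_{\cZ^+_{X'}}(\omega')$ must be compared in each case. The normalization $v(\pi)=1$ for divisorial valuations extending $v_K$ means that horizontal components are felt by $v$ only on the unbounded faces of the skeletons; on those faces, the discrepancy formula $K_{X'} + \Delta_{X'} = f^*(K_X+\Delta_X) + N$ provides exactly the cancellation required, yielding $\weight_\omega(v) = \weight_{\omega'}(v)$ and hence the claimed identification of Kontsevich--Soibelman skeletons.
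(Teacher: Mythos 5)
Your proposal takes a genuinely different route from the paper. You argue by showing $\weight_\omega = \weight_{\omega'}$ on all divisorial valuations of the common function field, then concluding since the Kontsevich--Soibelman skeletons are closures of minimality loci over $\text{Div}(X) = \text{Div}(X')$. The paper instead uses the characterization from Lemma~\ref{lemma characterization minimal points} (membership in $\Sk(X,\Delta_X,\omega)$ is detected by $D_{\min}(\cX,\omega)$ not containing the center of $v$), combined with log canonical centers and the effectivity of the relative discrepancy $M$ for the log canonical pair $(\cX,\Delta_\cX)$. Your approach is closer in spirit to Musta\c{t}\u{a}--Nicaise's original proof of birational invariance, adapted to the logarithmic setting; the paper's approach keeps the log canonical center bookkeeping explicit, which it reuses in Proposition~\ref{prop dual complex minimal good dlt = essential skeleton any resolution} and Proposition~\ref{prop product of dlt minimal pairs}.

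That said, there are two issues with your execution. First, the intended application of Lemma~\ref{lemma log formula for weight function} is not quite available. You want a single $R$-scheme $\cZ$ carrying two log structures, $\cZ^+_X$ and $\cZ^+_{X'}$, each a log-regular model of $X^+$ respectively $X'^+$. But the definition of a model for $(X,\lceil\Delta_X\rceil)$ requires $\cZ_K \cong X$, while a model of $X'^+$ requires $\cZ_K \cong X'$; when $f$ is not an isomorphism, no single scheme satisfies both. If you resolve the rational map $\cX \dashrightarrow \cX'$ you end up with $\cZ_K$ dominating both $X$ and $X'$ but generally equal to neither. Either you need to re-prove the weight formula in this larger generality (which is possible but is not what Lemma~\ref{lemma log formula for weight function} says), or you must compute $\weight_\omega(v)$ on a model $\cX^+$ of $X^+$ adapted to $v$ and $\weight_{\omega'}(v)$ on a model $\cX'^+$ of $X'^+$ with $\cX'\to\cX$, and compare across the morphism, tracking how the logarithmic divisors pull back.

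Second, your closing paragraph misplaces where the cancellation happens. Divisorial points in $\text{Div}(X)$ extend $v_K$ with $v(\pi)=1>0$, so their centers lie in the special fiber of every $R$-model; they are never on unbounded faces of any skeleton, and they assign zero to every horizontal divisor. Consequently the discrepancy between $\divisor_{\cZ^+_X}(\omega)$ and $\divisor_{\cZ^+_{X'}}(\omega')$, being the purely horizontal divisor $m(D_{\cZ,X'}-D_{\cZ,X})$, is invisible to $v$ automatically. The formula $K_{X'}+\Delta_{X'}=f^*(K_X+\Delta_X)+N$ is used only in establishing the identification $\omega\leftrightarrow\omega'$ of $\eqref{equ isomo forms log resolution}$ (your first paragraph) and plays no further role in the weight comparison; it does not provide cancellation on the unbounded faces, because no divisorial point of $\text{Div}(X)$ sits there.
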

\begin{proof}
The Kontsevich-Soibelman skeleton $\Sk(X,\Delta_X,\omega)$ is contained in the skeleton associated to any log-regular model by Proposition \ref{prop KS inside log-reg model}, so we choose log-regular models $\cX^+$ and $\cX'^+$ so that $f$ extends to a log resolution $f_R \colon \cX' \to \cX$. We denote by $\Delta_\cX = \overline{\Delta_X} + \cX_{k,\redu}$ and $\Delta_{\cX'}=\overline{\Delta_{X'}} + \cX'_{k,\redu}$. It suffices to check the proposition for divisorial valuations. By Lemma \ref{lemma characterization minimal points} a divisorial valuation $v$ of $\Sk(\cX^+)$ is in $\Sk(X,\Delta_X,\omega)$ if and only if it is a log canonical center of $(\cX,\Delta_{\cX})$ and the divisor $D_{\min}(\cX,\omega)$ does not contain the center of $v$.  

Suppose $v$ is a divisorial point in $\Sk(X, \Delta_X,\omega)$. Without loss of generality, we can assume that the divisor $\divisor_{\cX^+}(\omega)$ in $\cX^+$ does not contain the center of $v$. As $(\cX, \Delta_\cX)$ is a dlt pair, we have that $$K_{\cX'}+\Delta_{\cX'}= f_R^*(K_{\cX}+\Delta_\cX)+M$$ where $M$ is effective, thus  $\divisor_{\cX'^+}(\omega')=f^*_R(\divisor_{\cX^+}(\omega))+mM$. As $v$ is a log canonical center of $(\cX,\Delta_\cX)$, $M$ does not vanish along $v$, so neither does $\divisor_{\cX'^+}(\omega')$. Likewise $v$ is a log canonical center of $(\cX', \Delta_{\cX'})$. It follows that $v \in \Sk(X',\Delta_{X'},\omega')$.

Conversely, if $v$ is a divisorial point in $\Sk(X', \Delta_{X'},\omega')$, it is a log canonical center of $(\cX', \Delta_{\cX'})$ and the divisor $D_{\min}(\cX',\omega')$ does not contain the center of $v$. As a result, $v$ is also a log canonical center of $(\cX, \Delta_\cX)$, and the divisor $D_{\min}(\cX,\omega)$ does not contain the center of $v$ since its pullback does not.
\end{proof}
We define the Kontsevich-Soibelman skeleton $\Sk(X,\Delta_X,\omega)$ of a dlt pair $(X, \Delta_X)$ as the Kontsevich-Soibelman skeleton $\Sk(X',\Delta_{X'},\omega')$ where $(X',\Delta_{X'})$ is any log-resolution of $(X,\Delta_X)$, and $\omega'$ is the forms corresponding to $\omega$ under the isomorphism (\ref{equ isomo forms log resolution}): Proposition \ref{prop birational invariance essential skeleton for pairs} guarantees that this is well-defined. It follows that we can define the essential skeleton $\Sk(X,\Delta_X)$ of a dlt pair $(X, \Delta_X)$ as the essential skeleton of any log-resolution of $(X,\Delta_X)$.

Moreover, notice that our construction works more generally for log canonical pairs, hence the notions of Kontsevich-Soibelman skeletons and essential skeleton generalize to such pairs.

\spa{Suppose that $(X,\Delta_X)$ is a dlt pair over $K$, such that $K_X + \Delta_X$ is semiample. Suppose thar $\cX$ is a good dlt minimal model of $(X,\Delta_X)$ over $R$ and let $\Delta_\cX=\overline{\Delta_X} + \cX_{k,\redu}$. We denote by $\mathcal{D}_0^{=1}(\cX, \Delta_\cX)$ the dual complex of the strata of the coefficient 1 part of $\Delta_{\cX}$ that lie in the special fier.

We consider a log resolution $f:X' \rightarrow X$ that extends to a log resolution of $(\cX,\Delta_\cX)$. We write $K_{X'} + \Gamma' = f^*(K_X + \Delta_X)$. Let $\Delta_{X'}$ be the positive part of $\Gamma'$. We may embed the open dual complex $\mathcal{D}_0^{=1}(\cX, \Delta_\cX)$ into the birational points of $X$.
}
\begin{prop} \label{prop dual complex minimal good dlt = essential skeleton any resolution}
This embedding identifies $\mathcal{D}_0^{=1}(\cX, \Delta_\cX)$ with $\Sk(X',\Delta_{X'})$.
\end{prop}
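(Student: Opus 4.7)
The plan is to compare the two subsets of birational points on the level of divisorial valuations, using the weight function criterion from Lemma \ref{lemma characterization minimal points} together with the semiampleness of $K_\cX + \Delta_\cX$, essentially adapting the argument of Nicaise--Xu \cite{NicaiseXu} to the pair setting.

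First I would replace $f$ by a log resolution that extends over $R$: pick a log resolution $g \colon \cY \to \cX$ of $(\cX, \Delta_\cX)$ whose generic fiber refines $f$, and endow $\cY$ with the divisorial log structure coming from $\Delta_\cY$, the reduced preimage of $\Delta_\cX$. Then $\cY^+$ is an snc log-regular model of $(X', \lceil \Delta_{X'}\rceil)$, and by Propositions \ref{prop birational invariance essential skeleton for pairs} and \ref{prop KS inside log-reg model} the essential skeleton $\Sk(X', \Delta_{X'})$ is unchanged under this refinement and is contained in $\Sk(\cY^+)$. Since $(\cX, \Delta_\cX)$ is dlt, every stratum of the coefficient-$1$ part of $\Delta_\cX$ in the special fiber lifts uniquely, via strict transform, to a stratum of $\Delta_\cY$ in $\cY_k$, giving a canonical embedding of $\mathcal{D}_0^{=1}(\cX, \Delta_\cX)$ into $\Sk(\cY^+)$ that agrees with the embedding into birational points of $X$. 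Thus the problem reduces to identifying two subsets of $\Sk(\cY^+)$.

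Next I would establish the identification on divisorial points by writing $K_\cY + \Delta_\cY = g^*(K_\cX + \Delta_\cX) + M$, where by the dlt hypothesis $M$ is effective, $g$-exceptional, and supported away from the log canonical centers of $(\cX, \Delta_\cX)$. For any $\Delta_X$-logarithmic $m$-pluricanonical form $\omega$ on $X$ and its pullback $\omega'$, one has $\divisor_{\cY^+}(\omega') = g^*\divisor_{\cX^+}(\omega) + m M$, so by Lemma \ref{lemma characterization minimal points} a divisorial point $v \in \Sk(\cY^+)$ belongs to $\Sk(X', \Delta_{X'}, \omega')$ iff its center on $\cY$ meets neither the pullback of $D_{\min}(\cX,\omega)$ nor $M$. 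The semiampleness of $K_\cX + \Delta_\cX$ ensures that for each stratum $W$ of $\mathcal{D}_0^{=1}(\cX, \Delta_\cX)$ there is a section $\omega$ of some $m(K_\cX + \Delta_\cX)$ not vanishing at the generic point of $W$; combined with the fact that $M$ avoids log canonical centers, this places the corresponding divisorial valuation in $\Sk(X', \Delta_{X'}, \omega')$, hence in $\Sk(X', \Delta_{X'})$. Conversely, I would show that every divisorial valuation $v$ of minimal weight for some $\omega'$ has log discrepancy zero with respect to $(\cX, \Delta_\cX)$ and thus is centered on a coefficient-$1$ log canonical center of $(\cX, \Delta_\cX)$; since $v \in \Sk(\cY^+)$ its center on $\cY$ lies in $\cY_k$, so this log canonical center on $\cX$ lies in $\cX_k$, producing a face of $\mathcal{D}_0^{=1}(\cX, \Delta_\cX)$. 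Finally I would promote the identification from divisorial points to the full subsets by density and compatibility of the piecewise-linear structures on both sides.

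The main difficulty is the converse inclusion: one must control weights across the resolution $g$ well enough to conclude that if $\weight_{\omega'}$ is minimal at $v$ for some $\omega'$, then the log discrepancy of $v$ along $(\cX, \Delta_\cX)$ vanishes. This is where semiampleness and dltness combine with the positivity of $M$ on non-log-canonical places: the verification amounts to transferring the Nicaise--Xu computation, originally carried out for $\Delta = 0$, to the logarithmic pluricanonical setting with a general boundary, keeping careful track of horizontal versus vertical components of $\Delta_\cX$ so that only strata in $\cX_k$ (and not unbounded faces of $\Sk(\cY^+)$ coming from horizontal components of $\Delta_\cY$) contribute to the minimum-weight locus.
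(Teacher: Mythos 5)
Your proposal follows essentially the same route as the paper: pass to a log resolution of $(\cX,\Delta_\cX)$ over $R$, reduce to divisorial valuations, and compare the two sides via Lemma \ref{lemma characterization minimal points} and the decomposition $K_{\cX'}+\Delta_{\cX'} = g^*(K_\cX+\Delta_\cX)+M$ with $M$ effective and supported away from log canonical centers, using semiampleness of $K_\cX+\Delta_\cX$ to produce the required non-vanishing pluricanonical form in the forward direction. The paper's converse is terser (identifying $v$ as a log canonical center of $(\cX',\Delta_{\cX'})$ and hence of $(\cX,\Delta_\cX)$), but this matches the content of your log-discrepancy argument.
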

\begin{proof}
Choose a regular $R$-model $\cX'$ for $X'$ which is a log resolution of $(\cX, \Delta_\cX)$ extending $f$ and let  $\Delta_{\cX'}=\overline{\Delta_{X'}} + \cX'_{k,\redu}$, namely we have the log resolutions \begin{align*}
&(X', \Delta_{X'}) \rightarrow (X,\Delta_X) & \text{where }(X,\Delta_X) \text{ dlt and }\Delta_{X'} \text{ is the positive part of }\Gamma'\\
&(\cX', \Delta_{\cX'}) \rightarrow (\cX,\Delta_\cX) & \text{where }\Delta_\cX=\overline{\Delta_X} + \cX_{k,\redu} \text{ and } \Delta_{\cX'}=\overline{\Delta_{X'}} + \cX'_{k,\redu}.
\end{align*} As in the previous proof, it suffices to check the proposition for divisorial valuations. Let $v$ be a divisorial valuation, and suppose $v \in \mathcal{D}_0^{=1}(\cX,\Delta_\cX)$. Then $v$ is a log canonical center for $(\cX,\Delta_\cX)$, so $v$ is also a log canonical center of $(\cX', \Delta_{\cX'})$. For a sufficiently divisible index, we may find a $\Delta_X$-logarithmic pluricanonical form on $(X,\Delta_X)$ whose associated divisor in $\cX$ has vanishing locus $C$ such that $C$ is a divisor not containing the center of $v$. After pullback, we get a $\Delta_{X'}$-logarithmic pluricanonical form $\omega'$ whose associated divisor in $\cX'$ is supported on the strict transform of $C$ and the exceptional divisors of positive log discrepancy. But none of these contain $v$. Thus, $v \in \Sk(X',\Delta_{X'},\omega')$.

Conversely, if $v$ is a divisorial point in $\Sk(X', \Delta_{X'})$, then $v$ is a log canonical center of $(\cX', \Delta_{\cX'})$, so $v$ is a log canonical center of $(\cX, \Delta_\cX)$, hence an element of the open dual complex $\mathcal{D}_0^{=1}(\cX, \Delta_\cX)$.
\end{proof}

\begin{rem}
Proposition \ref{prop dual complex minimal good dlt = essential skeleton any resolution} compares the essential skeleton of $(X,\Delta_X)$ to the skeleton of a good minimal dlt model of $(X,\Delta_X)$. Thus, the result can be restated as follows: if $(X,\Delta_X)$ is a dlt pair with $K_X+\Delta_X$ semiample and $(\cX,\Delta_\cX)$ is a good dlt minimal model of $(X,\Delta_X)$ over $R$, then $\mathcal{D}_0^{=1}(\cX, \Delta_\cX) = \Sk(X,\Delta_X)$. This generalizes \cite{NicaiseXu}, Theorem 3.3.3 to dlt pairs.
\end{rem}

\subsection{Essential skeletons and products of log-regular models.}

\spa{We say that a pair $(X,\Delta_X)$ has non-negative Kodaira-Iitaka dimension if some multiple of the line bundle $K_X + \Delta_X$ has a regular section.}
\begin{theorem} \label{thm essential skeleton of product via log-reg}
Assume that the residue field $k$ is algebraically closed. Let $(X,\Delta_X)$ and $(Y,\Delta_Y)$ be pairs such that $X^+=(X,\lceil \Delta_X \rceil)$ and $Y^+=(Y, \lceil \Delta_Y\rceil)$ are log-regular log scheme over $K$. Suppose that both pairs have non-negative Kodaira-Iitaka dimension and both admit semistable log-regular models $\cX^+$ and $\cY^+$ over $S^+$. Then, the PL homeomorphism of skeletons $$\Sk(\cZ^+) \xrightarrow{\sim} \Sk(\cX^+) \times \Sk(\cY^+)$$of Proposition \ref{prop semistability and skeletons} induces a PL homeomorphism of essential skeletons $$\Sk(Z,\Delta_Z) \xrightarrow{\sim} \Sk(X,\Delta_X) \times \Sk(Y,\Delta_Y)$$ where $\cZ^+$, $Z$ and $\Delta_Z$ are the respective products. 
\end{theorem}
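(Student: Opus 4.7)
The plan is to prove the theorem by verifying two inclusions under the PL homeomorphism $\phi \colon \Sk(\cZ^+) \xrightarrow{\sim} \Sk(\cX^+) \times \Sk(\cY^+)$ of Proposition \ref{prop semistability and skeletons}, namely that $\phi$ maps $\Sk(Z,\Delta_Z)$ bijectively onto $\Sk(X,\Delta_X) \times \Sk(Y,\Delta_Y)$.

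For the inclusion $\Sk(X,\Delta_X) \times \Sk(Y,\Delta_Y) \subseteq \phi(\Sk(Z,\Delta_Z))$, the non-negativity of Kodaira--Iitaka dimension ensures the existence of non-zero regular $\Delta_X$- and $\Delta_Y$-logarithmic $m$-pluricanonical forms, and any such pair $(\omega_{X^+}, \omega_{Y^+})$ produces by pullback and tensor product a regular $\Delta_Z$-logarithmic $m$-pluricanonical form $\omega_{X^+} \boxtimes \omega_{Y^+}$ on $(Z,\Delta_Z)$. Theorem \ref{thm semistability and KS skeletons} then yields $\Sk(Z,\Delta_Z, \omega_{X^+}\boxtimes \omega_{Y^+}) \simeq \Sk(X,\Delta_X, \omega_{X^+}) \times \Sk(Y,\Delta_Y, \omega_{Y^+})$. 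Taking the union over all choices of $\omega_{X^+}$ and $\omega_{Y^+}$, the product of essential skeletons is contained in the union of Kontsevich--Soibelman skeletons attached to product forms, hence inside $\Sk(Z,\Delta_Z)$.

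For the reverse inclusion $\phi(\Sk(Z,\Delta_Z)) \subseteq \Sk(X,\Delta_X) \times \Sk(Y,\Delta_Y)$, I would exploit the Künneth decomposition of logarithmic pluricanonical forms. The isomorphism of log canonical bundles \eqref{equ log can bundles} combined with the Künneth formula identifies $H^0(Z,(\omega^{\log}_{Z^+/S^+})^{\otimes m})$ with the tensor product $H^0(X,(\omega^{\log}_{X^+/S^+})^{\otimes m}) \otimes H^0(Y,(\omega^{\log}_{Y^+/S^+})^{\otimes m})$, so every $\Delta_Z$-logarithmic $m$-pluricanonical form $\varpi$ admits a finite decomposition $\varpi = \sum_{i=1}^N \omega_{X^+,i} \boxtimes \omega_{Y^+,i}$. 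The non-archimedean triangle inequality $v_\varepsilon(\sum_i h_i) \geqslant \min_i v_\varepsilon(h_i)$ combined with formula \eqref{equ weight function product} then gives, for every $\varepsilon \in \Sk(\cZ^+)$ with $\phi(\varepsilon) = (\alpha,\beta)$,
\[
\weight_\varpi(\varepsilon) \;\geqslant\; \min_{1 \leqslant i \leqslant N} \bigl(\weight_{\omega_{X^+,i}}(\alpha) + \weight_{\omega_{Y^+,i}}(\beta) - m\bigr).
\]
The right-hand side is the pointwise minimum of weight functions of product forms, whose minimum locus is contained in $\Sk(X,\Delta_X) \times \Sk(Y,\Delta_Y)$ by the already proven forward inclusion.

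The main obstacle is to upgrade this inequality to an inclusion of minimum loci, since the weight function of a sum can strictly exceed the pointwise minimum of summands when top-order terms cancel. To overcome this, I would work locally at a point $z \in F_\cZ$ with admissible expansions in the sense of \eqref{eq-adm}: under the semistability of $\cX^+$, Proposition \ref{prop semistability and iso Kato fans} identifies $\caC_{\cZ,z}$ with the saturated amalgamated sum $\caC_{\cX,x} \oplus_\N \caC_{\cY,y}$, so every monomial in $\caC_{\cZ,z}$ factors as a product of a monomial from $\caC_{\cX,x}$ and one from $\caC_{\cY,y}$. An admissible expansion of $\varpi$ thus determines bilinearly the admissible expansions of the components $\omega_{X^+,i}$ and $\omega_{Y^+,i}$, and the initial support $\Gamma_z(\varpi)$ of Proposition \ref{prop-init} can be described through those of the summands. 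Using this description together with Remark \ref{rem def-val} and the density of divisorial points in $\Sk(\cZ^+)$, one shows that at any minimizer $\varepsilon_0$ of $\weight_\varpi$ there exists an index $i_0$ such that $\weight_{\omega_{X^+,i_0}}(\alpha_0)$ and $\weight_{\omega_{Y^+,i_0}}(\beta_0)$ are individually minimal, placing $\alpha_0 \in \Sk(X,\Delta_X,\omega_{X^+,i_0})$ and $\beta_0 \in \Sk(Y,\Delta_Y,\omega_{Y^+,i_0})$. This provides the required inclusion and, together with the first half, the asserted PL homeomorphism.
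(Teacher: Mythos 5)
Your first half (the inclusion $\Sk(X,\Delta_X)\times\Sk(Y,\Delta_Y)\subseteq\Sk(Z,\Delta_Z)$) matches the paper exactly: both invoke non-negative Kodaira--Iitaka dimension to get nonzero logarithmic forms, apply Theorem \ref{thm semistability and KS skeletons} to product forms, and take unions.

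Your second half has a genuine gap, and the mechanism you propose for closing it does not actually close it. The K\"unneth decomposition $\varpi=\sum_{i}\omega_{X^+,i}\boxtimes\omega_{Y^+,i}$ combined with the non-archimedean triangle inequality yields only the one-sided bound $\weight_\varpi(\varepsilon)\geqslant\min_i(\weight_{\omega_{X^+,i}}(\alpha)+\weight_{\omega_{Y^+,i}}(\beta)-m)$, and you are right that cancellations can make this strict. But the fix you sketch --- passing to admissible expansions at $z$ and trying to ``describe $\Gamma_z(\varpi)$ through the initial supports of the summands'' --- runs into exactly the same cancellation phenomenon at the level of formal power series: the coefficients $c_\gamma$ of an admissible expansion of $\varpi$ in $\widehat{\caO}_{\cZ,z}$ do not split bilinearly into coefficients of the summands $\omega_{X^+,i}$ and $\omega_{Y^+,i}$, and there is no reason a monomial in $\Gamma_z(\varpi)$ survives as an initial-support monomial for any individual summand. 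Moreover, even if you could locate an index $i_0$ attaining the right-hand minimum at $\varepsilon_0$, this identifies $\alpha_0$ as a minimizer of $\weight_{\omega_{X^+,i_0}}$ restricted to the slice $\{\beta=\beta_0\}$ of $\Sk(X,\Delta_X)\times\Sk(Y,\Delta_Y)$, not a global minimizer of $\weight_{\omega_{X^+,i_0}}$ over $\Sk(X,\Delta_X)$; these can differ. In effect, carrying this approach through would require showing that every $\Delta_Z$-logarithmic form is dominated (in the sense of weight functions) by some decomposable form $\omega_X\boxtimes\omega_Y$, which is essentially the assertion being proved.

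The paper's argument avoids the decomposition issue entirely by specialization. Given a divisorial minimizer $v_{z,\varepsilon}$ with a witnessing form $\omega$, it uses Lemma \ref{lemma characterization minimal points} to characterize minimality as $z\notin D_{\min}(\cZ,\omega)$. It then picks a $k$-rational point $p$ in the component of $\cY_k$ containing $y$, chosen so that $\overline{\{x\}}\times_R\{p\}$ avoids $D_{\min}(\cZ,\omega)$; semistability of $\cY^+$ plus Hensel's lemma lifts $p$ to an $R$-point, giving a closed embedding $i:\cX^+\hookrightarrow\cZ^+$ along which $\omega_{X^+/K}^{\log}=i^*\omega_{Z^+/K}^{\log}$. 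The pullback $i^*(\omega)$ is then a single $\Delta_X$-logarithmic form with $D_{\min}(\cX,i^*(\omega))=i^*(D_{\min}(\cZ,\omega))$ not containing $x$, so $x\in\Sk(X,\Delta_X,i^*(\omega))$ by Lemma \ref{lemma characterization minimal points} again. This produces the required form on $X$ directly without decomposing $\omega$ and without having to reconcile initial supports. As the paper's own remark notes, the direct argument via products of forms does work when $K_X+\Delta_X$ and $K_Y+\Delta_Y$ are semiample; the specialization trick is precisely what handles the general case.
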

\begin{proof}
It follows immediately from Theorem \ref{thm semistability and KS skeletons} that we have the inclusion $ \Sk(X,\Delta_X) \times \Sk(Y,\Delta_Y) \subseteq \Sk(Z,\Delta_Z)$. Thus, we reduce to prove the following statement. Let $v_{z,\varepsilon}$ be a divisorial point in $\Sk(\cZ^+)$ and $(v_{x,\alpha},v_{y,\beta})$ be the corresponding pair in $\Sk(\cX^+) \times \Sk(\cY^+)$ under the isomorphism of Proposition \ref{prop semistability and skeletons}; if $v_{z,\varepsilon}$ lies in the essential skeleton $\Sk(Z,\Delta_Z)$, then $v_{x,\alpha}$ lies in $\Sk(X,\Delta_X)$.

Assume that $v_{z,\varepsilon}$ lies in the essential skeleton $\Sk(Z,\Delta_Z)$. Then there exists a non-zero regular $\Delta_Z$-logarithmic $m$-pluricanonical form $\omega$ on $Z^+$, such that $v_{z,\varepsilon} \in \Sk(Z,\Delta_Z, \omega)$. By Lemma \ref{lemma characterization minimal points}, $D_{\min}(\cZ,\omega)$ does not contain $z$.

Let $E$ be an irreducible component of $\cY_k$ containing $y$ and denote by $\xi_E$ the generic point of $E$. Then the point in the Kato fan of $\cZ^+$ corresponding to $(x,\xi_E)$ is not contained in $D_{\min}(\cZ,\omega)$, as otherwise $z$ would be contained in it.

As $k$ is algebraically closed, we can choose a $k$-rational point $p$ in $E$ such that $p$ is contained in no other components of $D_\cY$ and $D_{\min}(\cZ,\omega)$ does not contained the locus $\overline{\{x\}} \times_R \{p\}$. By Hensel's Lemma and the assumption of semistability, $p$ can be lifted to an $R$-rational point of $\cY$. The pullback of $\cZ^+$ along this $R$-rational point is an embedding $i: \cX^+ \rightarrow \cZ^+$, so we have the diagram
\[
\xymatrix{ \cX^+ \ar[r]^i\ar[d] & \cZ^+ \ar[r]^{\pr_\cX}\ar[d]^{\pr_\cY} & \cX^+ \ar[d] \\
S \ar[r] & \cY^+\ar[r] & S.
}
\] Since $S$ has trivial normal bundle in $\cY$, we  have that $$\omega_{X^+/K}^{\log} = i^*\big(\omega_{Z^+/K}^{\log}\big),$$ so $i^*(\omega)$ is a non-zero pluricanonical form on $X$ and in particular is a regular $\Delta_X$-logarithmic $m$-pluricanonical form. Moreover, $D_{\min}(\cX,i^*(\omega)) = i^*(D_{\min}(\cZ,\omega))$. Finally, $x$ is not contained in $D_{\min}(\cX, i^*(\omega))$, as otherwise $i(x)= \{x\} \times_R \{p\}$ would be contained in $D_{\min}(\cZ,\omega)$. By Lemma \ref{lemma characterization minimal points}, $x$ is a point of $\Sk(X,\Delta_X, i^*(\omega))$ and this concludes the proof.
\end{proof}
\begin{rem}
Consider the case where the line bundles $K_X + \Delta_X$ and $K_Y + \Delta_Y$ are semi-ample, i.e. some multiple of them is base point free. It follows from the arguments of \cite{NicaiseXu}, Theorem 3.3.3 that the essential skeleton of $(Z,\Delta_Z)$ is a finite union of Kontsevich-Soibelman skeletons where the union runs through a generating set of global sections of a sufficiently large multiple of $K_Z+\Delta_Z$. We can construct such a set from generating sets of global sections of multiples of $K_X + \Delta_X$ and $K_Y+ \Delta_Y$ respectively, via tensor product. Then, in this case, the result of Theorem \ref{thm essential skeleton of product via log-reg} follows directly from Theorem \ref{thm semistability and KS skeletons}.
\end{rem}

\subsection{Essential skeletons and products of dlt models.} 
\spa{We need a combinatorial lemma to understand the formally local behavior of products of semistable dlt models.}
\begin{lemma}\label{smalltoric}
Let $M$ be the monoid generated by $r_1 \ldots r_{n_1}$, $s_1 \ldots s_{n_2}$ with the single relation $\sum_{i=1}^{n_1} r_i = \sum_{j=1}^{n_2} s_j$. Then any small $\mathbb{Q}$-factorialization of the affine toric variety $W=\Spec(k[M])$ associated to $M$ is a log resolution.
\end{lemma}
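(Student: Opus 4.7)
The plan is to recognize $W$ as a familiar affine toric variety and reduce the statement to a classical unimodularity fact about triangulations. Unpacking the monoid, $k[M] \cong k[x_1,\ldots,x_{n_1},y_1,\ldots,y_{n_2}]/(x_1\cdots x_{n_1} - y_1\cdots y_{n_2})$, so $W$ is the affine toric variety whose $T$-invariant prime divisors are $D_{ij} := V(x_i) \cap V(y_j) \cap W$ for $(i,j) \in \{1,\ldots,n_1\}\times\{1,\ldots,n_2\}$. A direct computation in $N = \Hom(M^{\gp},\mathbb{Z})$ shows that the fan of $W$ is the single cone over the lattice polytope $P$ equal to the product of the standard $(n_1-1)$-simplex with the standard $(n_2-1)$-simplex, the vertex labelled $(i,j)$ corresponding to the ray through the primitive generator of $D_{ij}$.

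Next I would argue that any small $\mathbb{Q}$-factorialization $W' \to W$ is automatically toric: the dense torus $T \subset W$ is identified with an open subscheme of $W'$, and the resulting rational $T$-action $T \times W' \dashrightarrow W'$ is defined outside a codimension $\geq 2$ locus. Normality of $W'$ then lets it extend uniquely to a regular action, so $W'$ is a toric variety. Hence $W' \to W$ corresponds to a simplicial subdivision of the defining cone of $W$ that introduces no new rays, equivalently a triangulation $\mathcal{T}$ of $P$ using only its $n_1 n_2$ vertices. Each maximal simplex of $\mathcal{T}$ has exactly $n_1 + n_2 - 1$ vertices and gives a maximal simplicial cone in the fan of $W'$, so the lemma reduces to checking that every such simplex is unimodular with respect to the ambient lattice.

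The crucial combinatorial input is the classical fact that every vertex-only triangulation of a product of two simplices is unimodular (the polytope is \emph{compressed}). For a direct argument, the top-dimensional simplices of $\mathcal{T}$ are in bijection with spanning trees of the complete bipartite graph $K_{n_1,n_2}$; choosing a leaf of the tree, say $i^* \in \{1,\ldots,n_1\}$ incident only to $j^*$, the row of primitive generators indexed by $r_{i^*}$ has a single nonzero entry (in the column $(i^*,j^*)$), so expanding along this row reduces the determinant to the analogous matrix for a spanning tree of $K_{n_1-1,n_2}$, and one concludes by induction on $n_1+n_2$ with trivial base case $n_1 = 1$ or $n_2 = 1$. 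Once unimodularity is established, every maximal cone of the fan of $W'$ is smooth, so $W'$ is a smooth toric variety and its toric boundary---the preimage of $D_W := \sum_{i,j} D_{ij}$---is automatically snc, making $W' \to W$ a log resolution. I expect the main subtleties to be the torus-extension argument ensuring toricity of arbitrary small $\mathbb{Q}$-factorializations, together with clean bookkeeping in the unimodularity induction.
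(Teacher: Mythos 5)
Your proposal follows essentially the same route as the paper's proof: identify the cone of $W$, index its rays by edges of the complete bipartite graph $K_{n_1,n_2}$, observe that maximal cones of a simplicial subdivision with no new rays correspond to spanning trees, and show these cones are smooth. The two arguments diverge only in the unimodularity step. The paper shows directly that the spanning-tree generators generate the whole lattice $N$: each omitted $x_{ij}$ closes a cycle in the tree and is therefore a $\mathbb{Z}$-linear combination of the tree's generators, and the $x_{ij}$ together generate $N$. Your version instead expands the determinant along a leaf row and inducts on $n_1+n_2$; this is equivalent but requires the bookkeeping you flag (matching the leaf-deleted lattice to the lattice for $K_{n_1-1,n_2}$). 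One genuine point in your favor: you explicitly justify that an arbitrary small $\mathbb{Q}$-factorialization of $W$ is toric, via the extension of the rational $T$-action across codimension~$\geq 2$; the paper quietly folds this into the citation of Fulton, which in context is about toric $\mathbb{Q}$-factorializations, so your proposal fills a small gap in the exposition. Your invocation of the ``compressed polytope'' framing (product of two simplices) is a nice conceptual packaging of the same spanning-tree fact, though not logically necessary given the direct argument you also give.
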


\begin{proof}
We calculate the fan of $W$. Let $N$ be the dual lattice of $M$. The fan associated to $W$ is the cone of elements of $N\otimes \mathbb{R}$ which are non-negative on $M$. We consider these as linear functions $l$ on the vector space spanned by the $r_i$ and $s_i$, subject to the restriction that $l(\sum_{i=1}^{n_1} r_i)=l( \sum_{j=1}^{n_2} s_j)$. Let $x_{ij}$ be the function which is $1$ on $r_i$ and $s_j$ and $0$ on all others. Then the fan of $W$ is given by the single cone $C_W$ spanned by the $x_{ij}$.

Any $\mathbb{Q}$-factorialization $\widetilde{W}$ corresponds to a simplicial subdivision of the cone $C_W$ (see \cite{Fulton1993}, p.65). We now check that every choice of $\widetilde{W}$ is non-singular.

A maximal cone of $\widetilde{W}$ is spanned by $n=n_1+n_2-1$ independent rays of $C_W$. Each ray of $C_W$ corresponds to a choice of $x_{ij}$, and we can index these by edges of the complete bipartite graph $B$ on the $r_i$ and $s_j$. These $x_{ij}$ are independent if and only if the corresponding edges form a spanning tree. Let $w_1 \ldots w_{n}$ span a maximal cone of $\widetilde{W}$. On this affine chart, $\widetilde{W}$ is smooth if and only if the $w_i$ generate $N$ as a lattice. We have shown already that the $x_{ij}$ generate $N$. But every $x_{ij}$ is either one of the $w_i$, or it completes a cycle in $B$, so that it is a $\mathbb{Z}$-linear combination of the $w_i$.
\end{proof}

\begin{prop} \label{prop product of dlt minimal pairs}
Let $(\cX,\Delta_{\cX})$ and $(\cY,\Delta_{\cY})$ be semistable good projective dlt minimal pairs over the germ of a pointed curve $\cC$. The product $(\cZ,\Delta_{\cZ})$ is a log canonical pair, $K_\cZ+\Delta_{\cZ}$ is semiample, and the log canonical centers of $(\cZ,\Delta_{\cZ})$ are strata of the coefficient $1$ part of $\Delta_{\cZ}$.
\end{prop}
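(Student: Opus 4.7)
The plan is to establish the three assertions -- log canonicity, semi-ampleness of $K_{\cZ}+\Delta_{\cZ}$, and the characterization of log canonical centres -- by exploiting the toric local structure of the fs product guaranteed by Lemma \ref{smalltoric}, together with the standard behaviour of log discrepancies under dlt log resolutions.

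For semi-ampleness I would use the isomorphism (\ref{equ log can bundles}) relative to $\cC$: combined with the semistability hypothesis (so $\cX_{k,\redu}=\cX_{k}$ and $\cY_{k,\redu}=\cY_{k}$, and similarly at the level of $\cZ$), this yields the identity
\begin{equation*}
K_{\cZ}+\Delta_{\cZ}\;\cong\;\pr_{\cX}^{*}(K_{\cX}+\Delta_{\cX})\;+\;\pr_{\cY}^{*}(K_{\cY}+\Delta_{\cY}).
\end{equation*}
Since "good dlt minimal" means each of $K_{\cX}+\Delta_{\cX}$ and $K_{\cY}+\Delta_{\cY}$ is semi-ample, so is the sum of their pullbacks.

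For log canonicity and the identification of log canonical centres I would argue locally around a point $z\in\cZ$ with images $x=\pr_{\cX}(z)$ and $y=\pr_{\cY}(z)$. First consider the case where $(\cX,\Delta_{\cX})$ is snc at $x$ and $(\cY,\Delta_{\cY})$ is snc at $y$. By semistability, up to smooth factors carrying horizontal boundary components of coefficient one, $\cX$ and $\cY$ look formally like $\Spec R[t_{1},\dots,t_{n_{1}}]/(t_{1}\cdots t_{n_{1}}-\pi)$ and $\Spec R[s_{1},\dots,s_{n_{2}}]/(s_{1}\cdots s_{n_{2}}-\pi)$. The fs fibred product is then, after eliminating $\pi$, precisely the affine toric variety $\Spec R[M]$ of Lemma \ref{smalltoric}, with $\Delta_{\cZ}$ the full toric boundary. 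By that lemma, any small $\Q$-factorialization is smooth and hence a log resolution of $(\cZ,\Delta_{\cZ})$, and being small it is crepant; so every log discrepancy along a boundary stratum equals $0$. This simultaneously proves that $(\cZ,\Delta_{\cZ})$ is log canonical at $z$ and that every log canonical centre through $z$ is a toric stratum, i.e.\ a stratum of the coefficient-$1$ part of $\Delta_{\cZ}$.

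It remains to handle points $z$ where, say, $x$ is not in the snc locus of $(\cX,\Delta_{\cX})$ and hence lies on no log canonical centre of $(\cX,\Delta_{\cX})$ by the dlt hypothesis. I would take a log resolution $f_{\cX}\colon\cX'\to\cX$ that is an isomorphism over the snc locus, with $K_{\cX'}+\Delta_{\cX'}=f_{\cX}^{*}(K_{\cX}+\Delta_{\cX})+E_{\cX}$ where $E_{\cX}$ is effective and every component centred above $x$ appears with strictly positive coefficient. Forming the fs fibred product $\cX'\times_{\cC}^{fs}\cY'$ and, if necessary, resolving further gives a log resolution $\cZ'\to\cZ$. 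Applying the analysis of the previous paragraph to $(\cX',\Delta_{\cX'})$ and $(\cY',\Delta_{\cY'})$ and using the additivity of log discrepancies in the log-smooth product (again via (\ref{equ log can bundles})), one finds that every divisor on $\cZ'$ centred above $z$ inherits the strictly positive contribution from $E_{\cX}$ or $E_{\cY}$; so no log canonical centre of $(\cZ,\Delta_{\cZ})$ meets $z$ and $(\cZ,\Delta_{\cZ})$ is log canonical there.

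The main obstacle is reconciling the formal toric local picture with the global pair $(\cZ,\Delta_{\cZ})$: one must check that the full coefficient-$1$ boundary, including horizontal components crossing transversely, is captured by the toric boundary of the local model, and that the small $\Q$-factorialization of Lemma \ref{smalltoric} is actually crepant for $(\cZ,\Delta_{\cZ})$. A secondary technical point is the careful bookkeeping of the two possible fibred products (ordinary vs.\ $fs$): since semistability of $\cX$ forces the relevant saturation indices to be $1$ on the relevant loci, these coincide where we compute log discrepancies, which is what allows the additivity of log discrepancies above to go through.
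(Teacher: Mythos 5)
Your proposal takes a genuinely different route from the paper for the hard part (log canonicity and the characterization of log canonical centres), and the comparison is instructive. The paper handles all discrepancies uniformly: it takes log resolutions $\widetilde{\cX}\to\cX$, $\widetilde{\cY}\to\cY$, a toroidal log resolution $\widetilde{\cZ}$ of $\widetilde{\cX}\times^{\text{fs}}_\cC\widetilde{\cY}$, and for every vertical divisor $\Gamma$ of $\widetilde{\cZ}$ computes its log discrepancy by choosing pluricanonical forms $\omega_\cX,\omega_\cY$ whose divisors avoid the centres of the projected valuations $\Gamma_{\widetilde{\cX}},\Gamma_{\widetilde{\cY}}$ and then invoking the additivity formula~(\ref{equ valuations product}). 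No case distinction and no use of Lemma~\ref{smalltoric} is made in that proof; the lemma is only deployed later, in the proof of Theorem~\ref{thm product of dual complex of good min dlt models}. By contrast, you split into a formal-local toric case near the snc loci, where you borrow Lemma~\ref{smalltoric} to get a crepant snc model, and a case away from the snc loci, where you appeal to "additivity of log discrepancies''. Your case-1 analysis is correct (with a minor caveat noted below) and is arguably more geometric; but the extra case distinction is unnecessary once one has the uniform computation that the paper performs.

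The genuine gap is in your case 2. "Additivity of log discrepancies in the log-smooth product'' is not a fact that follows directly from the bundle isomorphism~(\ref{equ log can bundles}). That isomorphism identifies the logarithmic canonical sheaves, but to translate it into an identity of log discrepancies for a divisorial valuation on the fs product one needs to know how the valuation evaluates the divisor of a pluricanonical form on $\cZ^+$ in terms of the projected valuations on $\cX^+$ and $\cY^+$. That computation \emph{is} formula~(\ref{equ valuations product}), derived in \S\ref{paragraph weight function product} via the local isomorphism $F_\cZ\to F_\cX\times_{F_S}F_\cY$ of Kato fans and the explicit description of valuations in the skeleton; it is precisely the paper's main tool and cannot be waved in by citing~(\ref{equ log can bundles}) alone. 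Once you invoke~(\ref{equ valuations product}) to justify the additivity, your case 2 works -- but then it also subsumes your case 1, and you are back to the paper's argument. Two smaller points: the formal local model of $\cZ$ at $z$ should be $\Spec k\llbracket M\oplus\N^{l}\rrbracket$ with an extra $\N^{l}$ factor for the horizontal boundary components (as in the proof of Theorem~\ref{thm product of dual complex of good min dlt models}); Lemma~\ref{smalltoric} handles only $M$, so the split into a smooth factor needs to be said explicitly. You also leave implicit that $\cZ$ is normal (so that the pair is well-posed), which the paper records as a consequence of the semistability of $\cX$ and $\cY$; your local toric models would in fact deliver this too, but it should be stated.
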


\begin{proof}
The product $\cZ$ is normal as $\cX$ and $\cY$ are semistable. The divisor $K_\cZ+\Delta_{\cZ}$ is semiample by pullback of semiample divisors.

Let $\widetilde{\cX}$ and $\widetilde{\cY}$ be log resolutions of $\cX$ and $\cY$. Then, we have 
\begin{align*}
K_{\widetilde{\cX}}+ \Delta_{\widetilde{\cX}}&= f_\cX^*(K_\cX + \Delta_{\cX}) + \sum a_i E_{\cX,i} \\
K_{\widetilde{\cY}}+ \Delta_{\widetilde{\cY}}&= f_\cY^*(K_\cY + \Delta_{\cY}) + \sum b_j E_{\cY,j}
\end{align*} where the coefficients $a_i$ and $b_j$ are non-negative. Let $\widetilde{\cZ}$ be a toroidal log resolution of the fs product $\widetilde{\cX} \times^{\text{fs}}_\cC \widetilde{\cY}$. In particular, $\widetilde{\cZ}$ is a log resolution of $\cZ$ and we can write $$K_{\widetilde{\cZ}}+ \Delta_{\widetilde{\cZ}}= f_\cZ^*(K_\cZ + \Delta_{\cZ}) + \sum c_h E_{\cZ,h}$$ where $\Delta_{\widetilde{\cZ}}$ is effective. Over the generic fiber, $(\cZ,\Delta_{\cZ})$ is dlt, so we need only compute discrepancies over the special fiber, namely study the positivity of the coefficients $c_h$. 

Let $\Gamma$ be a divisor of $\widetilde{\cZ}$ over the special fiber, denote by $v_\Gamma$ the corresponding divisorial valuation in $\Sk(\widetilde{\cZ}^+)$, by $\Gamma_\cX$, $\Gamma_\cY$ and $\Gamma_{\cZ}$ its images in $\cX$, $\cY$ and $\cZ$. The projections of $v_\Gamma$ in $\Sk(\widetilde{\cX}^+)$ and $\Sk(\widetilde{\cY}^+)$ are divisorial valuations. Up to subdivisions of the skeletons, we can assume without loss of generality that the projections correspond to divisors $\Gamma_{\widetilde{\cX}}$ and $\Gamma_{\widetilde{\cY}}$.

Choose a $\Delta_{\cX}$-logarithmic and a $\Delta_{\cY}$-logarithmic pluricanonical forms $\omega_{\cX}$ on $\cX$ and  $\omega_{\cY}$ on $\cY$ respectively, such that the divisors $\divisor_{\cX^+}(\omega_\cX)$ and $\divisor_{\cY^+}(\omega_{\cY})$ do not contain $\Gamma_\cX$ and $\Gamma_\cY$ respectively, where $\cX^+=(\cX,\lceil \Delta_{\cX}\rceil)$ and  $\cY^+=(\cY,\lceil \Delta_{\cY}\rceil)$. Then, the divisor $\divisor_{\cZ^+}(\omega_{\cZ})$, associated to the wedge product $\omega_{\cZ}$ of the pullbacks $\omega_{\cX}$ and $\omega_{\cY}$ to $\cZ$, does not contain $\Gamma_{\cZ}$.  Denote by $\omega_{\widetilde{\cX}}$, $\omega_{\widetilde{\cY}}$ and $\omega_{\widetilde{\cZ}}$ the pullback of the respective forms to $\widetilde{\cX}$, $\widetilde{\cY}$ and $\widetilde{\cZ}$. Then, we have 
\begin{align*}
\divisor_{\widetilde{\cX}^+} (\omega_{\widetilde{\cX}})&  = f_\cX^*(\divisor_{\cX^+}(\omega_\cX)) + \sum a_i E_{\cX,i} \\
\divisor_{\widetilde{\cY}^+} (\omega_{\widetilde{\cY}})&  = f_\cY^*(\divisor_{\cY^+}(\omega_\cY)) +  \sum b_j E_{\cY,j}\\
\divisor_{\widetilde{\cZ}^+} (\omega_{\widetilde{\cZ}})&  = f_\cZ^*(\divisor_{\cZ^+}(\omega_\cZ)) +  \sum c_h E_{\cZ,h}.
\end{align*} As $\divisor_{\cX^+}(\omega_\cX)$, $\divisor_{\cY^+}(\omega_{\cY})$ and $\divisor_{\cZ^+}(\omega_{\cZ})$ do not contain $\Gamma_\cX$, $\Gamma_\cY$ and $\Gamma_{\cZ}$ respectively, we have
\begin{align*}
v_{\Gamma_{\widetilde{\cX}}}(\divisor_{\widetilde{\cX}^+} (\omega_{\widetilde{\cX}}))&  = \sum a_i v_{\Gamma_{\widetilde{\cX}}}(E_{\cX,i}) \geqslant 0 \\
v_{\Gamma_{\widetilde{\cY}}}(\divisor_{\widetilde{\cY}^+} (\omega_{\widetilde{\cY}}))&  = \sum b_j v_{\Gamma_{\widetilde{\cY}}}(E_{\cY,j}) \geqslant 0\\
v_{\Gamma}(\divisor_{\widetilde{\cZ}^+} (\omega_{\widetilde{\cZ}}))&  = \sum c_h v_{\Gamma}( E_{\cZ,h}).
\end{align*}
From the formula \ref{equ valuations product},  $v_{\Gamma}(\divisor_{\widetilde{\cZ}^+} (\omega_{\widetilde{\cZ}}))= v_{\Gamma_{\widetilde{\cY}}}(\divisor_{\widetilde{\cY}^+} (\omega_{\widetilde{\cY}}))+ v_{\Gamma_{\widetilde{\cX}}}(\divisor_{\widetilde{\cX}^+} (\omega_{\widetilde{\cX}}))$, hence we obtain that the log discrepancy of $\Gamma$ with respect to the pair $(\cZ,\Delta_{\cZ})$ is non-negative. Moreover, it is zero if and only if the log discrepancies of $\Gamma_{\widetilde{\cX}}$ and $\Gamma_{\widetilde{\cY}}$ are both zero, namely if and only if $\Gamma_\cX$ and $\Gamma_{\cY}$ are log canonical centers of $(\cX,\Delta_{\cX})$ and $(\cY,\Delta_{\cY})$ respectively. Since for dlt pairs the log canonical centers are the strata of the coefficient $1$ part of the boundary, it follows that any log canonical center of $(\cZ,\Delta_{\cZ})$ is a product of such strata, hence a stratum of the coefficient part $1$ of $(\cZ,\Delta_{\cZ})$.
\end{proof}

\spa{Let $(X,\Delta_X)$ and $(Y,\Delta_Y)$ be dlt pairs over the germ of a punctured curve $C$. We denote by $(Z,\Delta_Z)$ their product, where $\Delta_Z = X \times_C \Delta_Y + \Delta_X \times_C Y$. Let $(\cX,\Delta_{\cX})$ and $(\cY,\Delta_{\cY})$ be semistable good projective dlt minimal models over the pointed curve. }

\begin{theorem}\label{thm product of dual complex of good min dlt models} The product $(Z, \Delta_Z)$ has a semistable good projective dlt minimal model $(\cZ', \Delta_{\cZ'})$ and $\D^{=1}_0(\Delta_{\cZ'}) \simeq \D^{=1}_0(\Delta_\cX) \times \D^{=1}_0(\Delta_\cY)$.
\end{theorem}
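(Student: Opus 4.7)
The plan has three pieces: construct $\cZ'$ as a small $\Q$-factorialization of the fs product $\cZ$; check it is a semistable good dlt minimal model using the local toric picture of Lemma \ref{smalltoric}; and identify its open dual complex with the product of dual complexes by passing through the essential skeleton.

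First I would start from the fs product $(\cZ,\Delta_\cZ)$, to which Proposition \ref{prop product of dlt minimal pairs} applies: $(\cZ,\Delta_\cZ)$ is log canonical with $K_\cZ+\Delta_\cZ$ semiample, and every log canonical center is a stratum of the coefficient-$1$ part of $\Delta_\cZ$. Using the MMP for lc pairs in the relative setting (a BCHM-type dlt modification/small $\Q$-factorialization result), I produce a small projective birational morphism $\psi \colon \cZ' \to \cZ$ with $\cZ'$ $\Q$-factorial, realized as a composition of $(K_\cZ+\Delta_\cZ)$-trivial flips over $\cZ$, and set $\Delta_{\cZ'} := \psi^{-1}_*\Delta_\cZ$. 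Smallness of $\psi$ yields $K_{\cZ'}+\Delta_{\cZ'} = \psi^*(K_\cZ+\Delta_\cZ)$, so semiampleness (goodness) passes to $\cZ'$; and since no divisor is contracted, the components of $\cZ'_k$ are in bijection with those of $\cZ_k = \cX_k\times_C\cY_k$, each with multiplicity one, preserving semistability.

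The central step is checking $(\cZ',\Delta_{\cZ'})$ is dlt, for which it suffices to exhibit snc neighborhoods of all log canonical centers; by smallness of $\psi$ and Proposition \ref{prop product of dlt minimal pairs}, this reduces to showing that $\cZ'$ is snc near $\psi^{-1}(z)$ for every log canonical center $z$ of $(\cZ,\Delta_\cZ)$. Above such a $z$, lying over $x\in\cX$ and $y\in\cY$, semistability of both factors combined with Proposition \ref{prop semistability and iso Kato fans} lets me identify the completed local ring $\widehat{\caO}_{\cZ,z}$ with the completion of $k[M]$, where $M$ is the monoid of Lemma \ref{smalltoric} with $n_1,n_2$ equal to the numbers of components of $\Delta_\cX,\Delta_\cY$ through $x,y$. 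Lemma \ref{smalltoric} then guarantees that the small $\Q$-factorialization $\psi$ is formally locally a log resolution, so $\cZ'$ is regular with snc boundary near $\psi^{-1}(z)$. Away from log canonical centers the pair is automatically klt, so nothing is needed there.

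For the final dual complex identification, Proposition \ref{prop dual complex minimal good dlt = essential skeleton any resolution} gives $\D^{=1}_0(\Delta_{\cZ'}) \simeq \Sk(Z,\Delta_Z)$ together with $\D^{=1}_0(\Delta_\cX)\simeq \Sk(X,\Delta_X)$ and $\D^{=1}_0(\Delta_\cY)\simeq \Sk(Y,\Delta_Y)$. Theorem \ref{thm essential skeleton of product via log-reg} then supplies the PL homeomorphism $\Sk(Z,\Delta_Z)\simeq \Sk(X,\Delta_X)\times \Sk(Y,\Delta_Y)$, provided $(X,\Delta_X)$ and $(Y,\Delta_Y)$ admit semistable log-regular models. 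These I would produce by log-resolving $(\cX,\Delta_\cX)$ and $(\cY,\Delta_\cY)$ with centers contained in the non-snc locus, which by the dlt hypothesis is disjoint from every log canonical center, and in particular from every intersection of components of the special fiber, so reducedness of the special fiber is preserved. I expect the main obstacle to be the identification of $\widehat{\caO}_{\cZ,z}$ with the toric ring of Lemma \ref{smalltoric}: this demands patching together the local structure of a semistable dlt pair at a log canonical center with an explicit computation of the fs fibred product of the local characteristic monoids, checking that the combinatorial data matches the monoid appearing in Lemma \ref{smalltoric}; the existence of the small $\Q$-factorialization in the relative MMP setting is a secondary, but nontrivial, technical point.
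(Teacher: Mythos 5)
Your overall architecture matches the paper's: both use a relative MMP to produce a small modification $\psi\colon\cZ'\to\cZ$, reduce the dlt check to the formal local toric picture and invoke Lemma~\ref{smalltoric}, and then identify the dual complexes. However, the step you flag as a ``secondary, but nontrivial, technical point'' is in fact the crux, and your proposal has a real gap there.

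The issue is that you take the small $\Q$-factorialization $\psi\colon\cZ'\to\cZ$ as a black box and then claim that, formally locally at a log canonical center $z$, $\psi$ becomes a small $\Q$-factorialization of the toric model $\Spec k[M]$, to which Lemma~\ref{smalltoric} would apply. But $\Q$-factoriality does not pass to formal completions (local class groups can jump), and even smallness plus $\Q$-factoriality of $\cZ'$ globally do not guarantee that the completed local picture $\widehat{\cZ'}\to\widehat{\cZ}_z$ is (the completion of) a small \emph{toric} $\Q$-factorialization of $T_{M,k}$. The paper is careful here precisely because of this: it constructs $\cZ'$ by running an MMP with scaling whose end result admits the explicit description $\cZ'=\text{Proj}_\cZ\bigoplus_{m\geqslant 0}\caO(-m\sum\alpha_i\Delta_{\cZ,i}^{=1})$ for suitably generic (and, after Veronese, integral) $\alpha_i$. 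This relative Proj has a transparent formal local model $T'=\text{Proj}_{T_{M,k}}\bigoplus\caO(-m\sum\alpha_i\Delta_{T,i})$; genericity of the $\alpha_i$ forces $T'$ simplicial, hence a small $\Q$-factorialization, hence smooth by Lemma~\ref{smalltoric}. The auxiliary Claim (producing $B_\cX, B_\cY$ with the required ampleness properties) and the careful choice of $\delta$ and $\alpha_i$ in the scaling MMP are precisely what make this explicit description possible; omitting them, as you do, leaves the local toric identification unjustified. Without that, you cannot conclude that $\cZ'$ is snc (hence dlt) near the log canonical centers.

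By contrast, your route for the final dual-complex identification is a genuine, and arguably cleaner, alternative to the paper's. Where the paper returns to the local toric model and traces faces of the skeleton through the simplicial subdivision of $T_{M,k}$, you instead chain Proposition~\ref{prop dual complex minimal good dlt = essential skeleton any resolution} (dual complex of a good dlt minimal model equals the essential skeleton) with Theorem~\ref{thm essential skeleton of product via log-reg} (essential skeletons of a product), after producing semistable log-regular (snc) models of $(X,\Delta_X)$ and $(Y,\Delta_Y)$ by log-resolving $(\cX,\Delta_\cX)$, $(\cY,\Delta_\cY)$ away from the log canonical centers; your observation that such centers meet at most one component of the reduced special fiber, so reducedness is preserved, is correct. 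This route is valid once $\cZ'$ has been shown to be a semistable good dlt minimal model of $(Z,\Delta_Z)$, and it avoids re-doing the local toric bookkeeping. So: the dual-complex identification is fine and genuinely different from the paper's; the construction of $\cZ'$ is where you need the paper's explicit Proj argument rather than an abstract existence statement.
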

\begin{proof}
Let $(\cZ,\Delta_{\cZ})$ be the product of $(\cX,\Delta_{\cX})$ and $(\cY,\Delta_{\cY})$ as in Proposition \ref{prop product of dlt minimal pairs}. Thus $K_{\cZ}+\Delta_{\cZ}$ is semiample and log canonical, so any minimal dlt model over $(\cZ, \Delta_{\cZ})$ will also have semiample log canonical divisor.

Let $\vartheta: \cW \rightarrow \cZ$ be a log resolution given by iterated blow-ups at centers of codimension at least $2$. Then there exists an effective divisor $D$ supported on all of the exceptional divisors, such that $-D$ is $\vartheta$-ample.
\\

\emph{Claim:} there exist $B_\cX$ and $B_\cY$ effective divisors on $\cX$ and $\cY$ whose respective supports contain no log canonical centers of $\Delta_{\cX}$ and $\Delta_{\cY}$, and such that $B_\cX - \epsilon \Delta_{\cX}^{=1}$ and $B_\cY - \epsilon \Delta_{\cY}^{=1}$ are ample, for $\epsilon$ small and rational.
\\

Choose $\epsilon$ small and rational. Then $\Gamma_\cX=\Delta_\cX +B_\cX -\epsilon \Delta_{\cX}^{=1}$ and $\Gamma_\cY=\Delta_\cY +B_\cY -\epsilon \Delta_{\cY}^{=1}$ are effective, and $(\cX,\Gamma_\cX)$ and $(\cY,\Gamma_\cY)$ are klt. Let $\Gamma_{\cW}'$ be the log pullback to $\cW$ of the product $\Gamma_\cZ$ of $\Gamma_\cX$ and $\Gamma_\cY$, namely $K_\cW + \Gamma_{\cW}' = \vartheta^*(K_\cZ + \Gamma_{\cZ})$, we denote by $\Gamma_\cW$ the positive part of $\Gamma_{\cW}'$. Since $(\cZ,\Gamma_\cZ)$ was klt, so is $(\cW,\Gamma_\cW)$. For sufficiently small $\delta$, $(\cW, \Gamma_\cW+\delta D)$ is still klt.

Let $\alpha_i$ be arbitrary small rational coefficients, one for each divisor $\Delta_{\cZ,i}^{=1}$ of $\Delta_\cZ$ with coefficient $1$ and in the special fiber. We will recover the requested dlt model by running an MMP with scaling on the pair $(\cW, \Gamma_\cW+\delta D - \sum \alpha_i \Delta_{\cZ,i}^{=1})$, scaling with respect to an ample divisor $A$ equivalent to $-D$. By \cite{BirkarCasciniHaconEtAl} this MMP terminates in a log terminal model $\phi \colon \cW \dashrightarrow \cZ'$. Moreover, as long as the $\alpha_i$ are small relative to $\delta$, when the MMP terminates it must be the case that every exceptional divisor is contracted.  As running MMP induces birational contractions, the morphism $\psi \colon \cZ' \to \cZ$ is small. Hence $(\cZ', \psi^*\Delta_\cZ)$ is log canonical, and every log canonical center dominates a stratum of the coefficient $1$ part of $(\cZ,\Delta_\cZ)$.

By construction of $\cZ'$, the divisor $-\sum \alpha_i \Delta_{\cZ,i}^{=1} + \mu \phi_* A$ is $\psi$-ample, where $\mu$ is arbitrarily small. But for $\mu$ small enough, we can absorb $\mu \phi_* A$ into the term $\delta D$. Thus in fact $-\sum \alpha_i \Delta_{\cZ,i}^{=1}$ is $\psi$-ample. As a result we can represent $$\cZ' = \text{Proj}_\cZ \bigoplus_{m \geqslant 0} \mathcal{O}\big(-m(\sum \alpha_i \Delta_{\cZ,i}^{=1})\big).$$ At this point we may take an arbitrarily large Veronese subring and assume the $\alpha_i$ are all integers.

Now, we show that $(\cZ', \psi^*\Delta_\cZ)$ is a dlt pair 
by looking at a formal toric model. Indeed, $(\cZ,\Delta_{\cZ})$ is formally locally toric at the log canonical centers, and the condition of being dlt is a formally local property. Moreover, we reduce to check this property at the log canonical centers of  $(\cZ', \psi^*\Delta_\cZ)$ that lie in the special fiber, as the generic fibers of $\cZ'$ and $\cZ$ are isomorphic and the latter is dlt.

Let $z$ be the generic point of the image in $\cZ$ of a log canonical center of $(\cZ', \psi^*\Delta_\cZ)$, hence $z$ is the generic point of a log canonical center of $(\cZ,\Delta_{\cZ})$ and by Proposition \ref{prop product of dlt minimal pairs} it is a stratum of the coefficient $1$ part of $\Delta_{\cZ}$. Let $x$ and $y$ be the generic points of the corresponding strata of $(\cX,\Delta_\cX)$ and $(\cY, \Delta_\cY)$. Let $E_x$ and $E_y$ be the monoids of effective Cartier divisors supported on the strata near $x$ and $y$ respectively. Then the corresponding monoid for $z$ is $\langle E_x \oplus E_y \rangle /(t_x=t_y)$, where $t_x$ and $t_y$ are the respective sums of local equations of strata in the special fibers. This monoid has the form $M \oplus \mathbb{N}^l$, where $l$ is the number of horizontal divisors containing $z$, and $M$ is a monoid of the type considered in Lemma \ref{smalltoric}. The toric variety $T_{M,k}=\Spec k[M \oplus \mathbb{N}^l]$ is a formal local model for $\cZ$ near $z$, so it suffices to consider $$T'=\text{Proj}_{T_{M,k}} \bigoplus_{m \geqslant 0} \mathcal{O}\big(-m(\sum \alpha_i \Delta_{T,i})\big),$$ where the divisors $\Delta_{T,i}$ range over the torus invariant Weil divisors corresponding to the $\Delta_{\cZ,i}^{=1}$.

The divisors corresponding to the $l$ generators of $\mathbb{N}^l$ are Cartier, so their contribution to $T$ is trivial and we can reduce to the case $l=0$. For sufficiently general choices for the $\alpha_i$, the toric variety $T'$ is simplicial, its fan being induced by the simplicial subdivision of the fan of $T_{M,k}$ such that the $\alpha_i$ induce a strictly convex piecewise linear function. Thus, $T'$ is a small $\Q$-factorialization of $T_{M,k}$. By Lemma \ref{smalltoric}, for any such model, $T'$ is a smooth toric variety, hence its invariant divisors are snc. Thus $(\cZ', \psi^*\Delta_\cZ)$ is dlt.

Finally, we compute the dual complex of the coefficient 1 part of $(\cZ', \psi^*\Delta_\cZ)$ by looking at formal toric models again. Locally at the generic points of the log canonical centers, $(\cX,\Delta_{\cX})$ and $(\cY,\Delta_{\cY})$ are snc, hence log-regular, and are also semistable. By Proposition \ref{prop semistability and iso Kato fans} there is a bijective correspondence between pairs of points in the Kato fans of $\cX$ and $\cY$ locally around the log canonical centers and the points in the Kato fan of their product. This induces, by Proposition \ref{prop semistability and skeletons}, a PL homeomorphism between the product of skeletons around the log canonical centres and the skeleton of their product, namely a bijective correspondence between pairs of points in the dual complexes $\Delta_{\cX}^{=1}$ and $\Delta_{\cY}^{=1}$, and points in the skeleton of $(\cZ,\Delta_{\cZ})$ whose projections map to $\Delta_{\cX}^{=1}$ and $\Delta_{\cY}^{=1}$. A face $\sigma_z$ of the skeleton of $\cZ$, where $z$ is a  log canonical centre corresponding to $(x,y)$, corresponds to a prime ideal of the monoid $E_z = \langle E_x \oplus E_y \rangle /(t_x=t_y)$. Thus, we may reduce to consider such faces in the formal toric model $T_{M,k}$, and to study the dual complex of the coefficient 1 part of $(\cZ',\psi^*\Delta_\cZ )$ in the formal model $T'$. But $T'$ is obtained by a simplicial subdivision of $T$. Thus, we conclude that the dual complex $\D^{=1}_0(\psi^*\Delta_\cZ)$ is identified with the product of the dual complexes of the coefficient 1 part of $(\cX,\Delta_{\cX})$ and $(\cY,\Delta_{\cY})$.
\end{proof}

\begin{proof}[Proof of Claim]
Let $(\cX,\Delta_{\cX})$ be a good projective minimal dlt model of $(X,\Delta_X)$ and let $A_\cX$ be an ample divisor on $\cX$. Let $\mathcal{J}$ be the ideal sheaf of $\Delta_{\cX}^{=1}$ and let $$\chi: \cX' = \text{Bl}_\mathcal{J}\cX \rightarrow \cX$$ be the blow-up of $\cX$ with respect to $\mathcal{J}$. Then the trasform $- ((\Delta_{\cX}^{=1})'+ E$ is a $\chi$-ample Cartier divisor, where $(\Delta_{\cX}^{=1})'$ denotes the strict trasform of $\Delta_{\cX}^{=1}$ and $E$ an effective divisor supported on the exceptional divisor of $\chi$. For $\varepsilon$ small positive rational $\chi^*(A_\cX) - \varepsilon (\Delta_{\cX}^{=1} + E) $ is ample. We choose such an $\varepsilon$. For a sufficiently large integer $n$, we can find $$G_{\cX'} \sim n \chi^*(A_\cX) - n \varepsilon (\Delta_{\cX'}^{=1} + E)$$ such that $G_{\cX'}$ is effective and contains no log canonical centers. Then the push-forward $G_\cX \sim n A_\cX - n \varepsilon \Delta_{\cX}^{=1}$ of $G_{\cX'}$ is effective and contains no log canonical centers of $(\cX,\Delta_{\cX})$. In particular $G_\cX$ is Cartier at the log canonical centers of $(\cX,\Delta_{\cX})$.

We can run the same construction for the ideal sheaf of the divisor $G_\cX$. Notice that the blow-up with respect to $G_\cX$ is an isomorphism at the log canonical centers of $(\cX,\Delta_{\cX})$ as there $G_\cX$ is Cartier. We obtain that, for sufficiently small positive rationals $\delta$ and then sufficiently large integers $m$, we can find $H_\cX \sim m A_\cX - m \delta G_\cX$ such that it is effective and not containing any log canonical center of $(\cX,\Delta_{\cX})$. Then $$\frac{1}{\delta}A_\cX - \frac{1}{m \delta}H_\cX \sim G_\cX \sim n A_\cX - n \varepsilon \Delta_{\cX}^{=1} $$ implies that  $$ \Big(\frac{1}{\delta} - n\Big) A_\cX  \sim \frac{1}{m \delta} H_\cX - n \varepsilon  \Delta_{\cX}^{=1}.$$ For $\delta$ sufficiently small the term $\big(\frac{1}{\delta} - n\big)$ is positive, and then for $m$ sufficiently large $B_\cX = \frac{1}{m \delta} H_\cX$ concludes the proof.
\end{proof}

\section{Applications}\label{sec hkahler}
\subsection{Weight functions and skeletons for finite quotients}

\spa{Let $X$ be a connected, smooth  and proper $K$-variety and let $G$ be a finite group acting on $X$. Let $X^{\an}$ be the analytification of $X$. We recall that any point of $X^{\an}$ is a pair $x=(\xi_x,|\cdot|_x)$ with $\xi_x \in X$ and $|\cdot|_x$ an absolute value on the residue field $\kappa(\xi_x)$ that extends the absolute value on $K$. For any point $\xi_x$ of $X$, an element $g$ of the group $G$ induces an isomorphism between the residue fields $\kappa(\xi_x)$ and $\kappa(g \ldotp \xi_x)$, that we still denote by $g$. Then, the action of $G$ extends to $X^\an$ in the following way $$g \ldotp (\xi_x,|\cdot|_x) = ( g \ldotp \xi_x,  |\cdot|_x \circ g^{-1}).$$ In particular the action preserves the sets of divisorial and birational points of $X$.

Let $f: X \rightarrow Y=X/G$ be the quotient map of $K$-schemes, let $f^\an: X^\an \rightarrow Y^\an$ be the map of Berkovich spaces induced by functoriality and let $\tilde{f}: X^\an \rightarrow X^\an/G$ be the quotient map of topological spaces.
\begin{prop} (\cite{Berkovich}, Corollary 5) \label{prop Berk identification of quotient}
Under the above notations, there is a canonical homeomorphism between $X^\an/G $ and $Y^\an$ such that $\tilde{f}$ and $f^\an$ are identified.
\end{prop}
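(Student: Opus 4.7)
By the universal property of the quotient topology, the $G$-invariance of $f^\an$ (which follows from the $G$-invariance of $f$) produces a unique continuous map $\phi : X^\an/G \to Y^\an$ with $\phi \circ \tilde{f} = f^\an$. The plan is then to show $\phi$ is a bijection and that its inverse is continuous. The question of being a homeomorphism is local on $Y$: covering $Y$ by $G$-invariant affine opens reduces us to the case $Y = \Spec A^G$ and $X = \Spec A$ with $A$ integral over $B := A^G$. In this setting a point of $X^\an$ is an equivalence class of multiplicative seminorms on $A$ extending $|\cdot|_K$, and $f^\an$ is the restriction of seminorms to $B \subseteq A$.

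For surjectivity of $\phi$, I would invoke the standard extension theorem for valuations: any multiplicative seminorm on $B$ extends to some multiplicative seminorm on the integral extension $A$. For injectivity modulo $G$, suppose two points $x_1 = (\xi_1,|\cdot|_1)$ and $x_2 = (\xi_2,|\cdot|_2)$ of $X^\an$ satisfy $f^\an(x_1) = f^\an(x_2) = (\eta,|\cdot|)$. The fibers of $\Spec A \to \Spec B$ are $G$-orbits by classical invariant theory, so after replacing $x_2$ by a $G$-translate we may assume $\xi_1 = \xi_2 =: \xi$; then $|\cdot|_1$ and $|\cdot|_2$ are two valuations on $\kappa(\xi)$ extending the same valuation on $\kappa(\eta)$. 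Since $A$ is integral over $B = A^G$, the extension $\kappa(\xi)/\kappa(\eta)$ is algebraic and normal with automorphism group containing the image of the stabilizer $G_\xi$, and the stabilizer acts transitively on valuations extending a fixed one. Adjusting by a suitable element of $G_\xi$ identifies $x_1$ and $x_2$ in $X^\an/G$.

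For the continuity of $\phi^{-1}$, I would argue that $f^\an$ is a topological quotient map. Indeed $f$ is a finite morphism, hence $f^\an : X^\an \to Y^\an$ is proper with finite fibers in the sense of Berkovich, and in particular closed. A continuous surjective closed map onto a Hausdorff space (and $Y^\an$ is Hausdorff by the separatedness of $Y$) is a topological quotient map, so $\phi$ is automatically a homeomorphism. The identification of $f^\an$ with $\tilde{f}$ under this homeomorphism is then tautological from $\phi \circ \tilde{f} = f^\an$.

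The main obstacle is the transitivity of $G_\xi$ on valuation extensions when $\kappa(\xi)/\kappa(\eta)$ is infinite (for instance when $\xi$ is the generic point of an irreducible component of $X$). One handles this by passing to the algebraic closure, using that an algebraic normal extension is a filtered union of finite Galois subextensions and that transitivity in each finite layer assembles (via a Zorn-type argument or the standard ``going-up'' for valuation rings) to the full statement. A secondary technical point is checking that the reduction to the affine case is compatible with the $G$-action and the formation of seminorm spaces; this uses that $G$ preserves a cofinal family of affine opens on $Y$ and that analytification commutes with open immersions.
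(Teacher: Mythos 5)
The paper itself supplies no proof here: Proposition \ref{prop Berk identification of quotient} is cited directly from Berkovich (his Corollary 5), so there is no in-text argument to compare against. Your proposal is therefore supplying a proof the authors omitted, and the outline you give is essentially the standard one and is correct in all its main steps.

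A few remarks. The universal-property step, the reduction to a finite affine cover (any affine open $V \subset Y$ has preimage $f^{-1}(V) = \Spec A$ affine with $A$ integral over $A^G = \mathcal{O}_Y(V)$), and the use of lying-over together with the Chevalley extension theorem for surjectivity are all exactly right. The injectivity-mod-$G$ step is the heart of the argument: you correctly combine (i) transitivity of $G$ on the fibers of $\Spec A \to \Spec A^G$, (ii) normality of the residue extension $\kappa(\xi)/\kappa(\eta)$ and surjectivity of the stabilizer $G_\xi$ onto $\mathrm{Aut}\bigl(\kappa(\xi)/\kappa(\eta)\bigr)$ (Bourbaki, Alg. Comm. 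V.\,\S2.2, Th.\,2), and (iii) transitivity of the automorphism group of a normal algebraic extension on extensions of a fixed valuation (Bourbaki, Alg. Comm. VI.\,\S8). Your flagged concern about the infinite-degree case is real but is already absorbed into (iii), which holds for arbitrary (not necessarily finite) normal algebraic extensions, so the Zorn argument is genuinely standard and not a hidden gap.

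One simplification worth noting: in the setting of the paper $X$ is proper, hence $X^{\an}$ is compact and so is its quotient $X^{\an}/G$, while $Y^{\an}$ is Hausdorff. A continuous bijection from a compact space to a Hausdorff space is automatically a homeomorphism, so your last step (properness of $f^{\an}$, closedness, quotient map) can be replaced by this one-line observation, avoiding any appeal to Berkovich's theory of proper analytic morphisms. Your more general argument is still correct and has the advantage of not using properness of $X$, but under the paper's hypotheses it is more machinery than is needed.
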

}

\begin{lemma} \label{lemma G invariant forms and weight function}
Let $\omega$ be a $m$-pluricanonical rational form on $X$. If $\omega$ is $G$-invariant, then the weight function associated to $\omega$ on the set of birational points is stable under the action of $G$. 
\end{lemma}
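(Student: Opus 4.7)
The plan is to verify $G$-invariance of $\weight_\omega$ first at divisorial points, where the weight function admits the concrete expression of Lemma \ref{lemma log formula for weight function}, and then to extend by density and continuity to all birational points. The key preliminary step is to produce a $G$-equivariant log-regular model of $X$: since we are in characteristic zero, $G$-equivariant resolution of singularities furnishes a proper $R$-model $\cX$ of $X$ on which the action of $G$ extends and whose special fiber $\cX_k$ is a strict normal crossing divisor. Equipping $\cX$ with the divisorial log structure associated to $\cX_k$, we obtain a $G$-equivariant log-regular log scheme $\cX^+$ over $S^+$.

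By $G$-invariance of $\omega$, the divisor $\divisor_{\cX^+}(\omega)$ associated to $\omega$ viewed as a rational section of $(\omega^{\log}_{\cX^+/S^+})^{\otimes m}$ is itself $G$-invariant. For every divisorial point $x \in \Sk(\cX^+)$, Lemma \ref{lemma log formula for weight function} yields
$$\weight_\omega(x) = v_x(\divisor_{\cX^+}(\omega)) + m.$$
Unwinding the description of the $G$-action on $X^\an$, for $g \in G$ a local equation of the $G$-invariant divisor near the center $g \ldotp \xi_x$ of $g \ldotp x$ pulls back under $g$ to a local equation, up to a unit, of the same divisor near $\xi_x$. Combining this with the formula $|\cdot|_{g \ldotp x} = |\cdot|_x \circ g^{-1}$ defining the action on valuations, we obtain $v_{g \ldotp x}(\divisor_{\cX^+}(\omega)) = v_x(\divisor_{\cX^+}(\omega))$, hence $\weight_\omega(g \ldotp x) = \weight_\omega(x)$. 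A divisorial point not initially contained in $\Sk(\cX^+)$ can be brought into a skeleton by iteratively performing $G$-equivariant blow-ups along its $G$-orbit, so the conclusion extends to every divisorial point of $X^\an$.

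To finish, we use that $\weight_\omega$ is continuous on $\text{Bir}(X)$ and that $G$ acts on $X^\an$ by homeomorphisms preserving $\text{Bir}(X)$. Since divisorial points are dense in $\text{Bir}(X)$, the $G$-invariance proven at divisorial points extends by continuity to every birational point. The only non-trivial ingredient is the existence of a $G$-equivariant snc model, which is an application of equivariant resolution of singularities in characteristic zero; once this is available, the proof is a direct application of the logarithmic formula for $\weight_\omega$ together with the tautological $G$-equivariance of the divisor of a $G$-invariant form.
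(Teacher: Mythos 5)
Your proof is correct in outcome but differs from the paper's in a few substantive ways, and has one step that needs repair.

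\textbf{Where you diverge.} The paper does not invoke $G$-equivariant resolution of singularities. Instead, for a fixed birational point $x$ and group element $g$, it picks snc models whose skeletons contain $x$ and $g\cdot x$ respectively, replaces them by an snc model $\cY$ dominating both, and then applies Lemma~\ref{lemma log formula for weight function} twice, using the computation
\[
v_{g\cdot x}(\divisor_{\cY^+}(\omega)) = v_{x}\bigl((g^{-1})^*\divisor_{\cY^+}(\omega)\bigr) = v_{x}(\divisor_{\cY^+}(\omega)),
\]
the last equality coming from $g^*\omega=\omega$. This is more economical than your route: there is no need to produce a globally $G$-equivariant model, no equivariant-blow-up argument, and no separate passage from divisorial to birational points — the argument runs directly at an arbitrary birational point. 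Your approach buys a cleaner conceptual statement (``the divisor $\divisor_{\cX^+}(\omega)$ is literally $G$-invariant on a single model''), at the cost of invoking equivariant resolution and a somewhat delicate equivariant iterated-blow-up step to capture an arbitrary divisorial valuation.

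\textbf{The step that needs repair.} Your final passage ``extends by continuity to every birational point'' does not hold as stated: the weight function $\weight_\omega$ is only \emph{lower semicontinuous} on $\mathrm{Bir}(X)$ (\cite{MustataNicaise}), and agreement on a dense set does not propagate for lower semicontinuous functions. The correct way to extend within your framework is via the defining formula $\weight_\omega(x)=\sup_{\cX}\weight_\omega(\rho_{\cX}(x))$: take the sup over $G$-equivariant snc models $\cX$ (these are cofinal), note that the retraction $\rho_\cX$ commutes with the $G$-action when $\cX$ is $G$-equivariant, and use the invariance already established on $\Sk(\cX^+)$. Alternatively, adopt the paper's device of choosing, for each $x$ and $g$, a model whose skeleton contains both $x$ and $g\cdot x$; then Lemma~\ref{lemma log formula for weight function} gives the equality directly, with no limiting argument at all.
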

\begin{proof}
Let $x$ be a birational point of $X$ and $g$ an element of $G$. There exist snc models $\cX$ and $\cX'$ over $R$ such that $x \in \Sk(\cX)$ and $g \ldotp x \in \Sk(\cX')$. By replacing them by an snc model $\cY$ that dominates both $\cX$ and $\cX'$, we can assume that both points lies in $\Sk(\cY)$. The weights of $\omega$ at $x$ and $g \ldotp x$ are such that \begin{align*} \weight_{\omega}(g \ldotp x)& = v_{g \ldotp x}(\divisor_{\cY^+}(\omega)) + m  = v_{x}((g^{-1})^*\divisor_{\cY^+}(\omega)) + m  \\ & = v_{x}(\divisor_{\cY^+}(\omega)) + m = \weight_{\omega}(x) \end{align*} as $\omega$ is a $G$-invariant form. Thus we see that birational points in the same $G$-orbit have the same weight with respect to $\omega$. 
\end{proof}

\begin{cor} \label{cor:G inv forms and KS}
Let $\omega$ be a $G$-invariant pluricanonical rational form on $X$. Then the Kontsevich-Soibelman skeleton $\Sk(X,\omega)$ is stable under the action of $G$.
\end{cor}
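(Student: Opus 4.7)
The plan is to deduce this corollary directly from the preceding Lemma together with the continuity of the $G$-action on $X^{\an}$. Recall that, by the definition recalled just before Proposition \ref{prop KS inside log-reg model}, the Kontsevich-Soibelman skeleton $\Sk(X,\omega)$ is the closure in $\mathrm{Bir}(X)$ of the subset
\[
\mathrm{Min}(\omega) := \{x \in \mathrm{Div}(X) \mid \weight_{\omega}(x) = \weight_{\omega}(X)\}
\]
of divisorial points of $X^{\an}$ at which $\weight_{\omega}$ attains its minimal value. So it suffices to show both that $\mathrm{Min}(\omega)$ is $G$-stable and that the $G$-action on $\mathrm{Bir}(X)$ is continuous, and hence commutes with taking closures.

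First I would observe that the action of $G$ on $X^{\an}$ preserves the sets $\mathrm{Div}(X)$ and $\mathrm{Bir}(X)$ of divisorial and birational points, as noted in the paragraph preceding Proposition \ref{prop Berk identification of quotient}; this is clear because $g \in G$ induces an automorphism of $X$, so it carries divisorial valuations to divisorial valuations and preserves the generic point of $X$. Lemma \ref{lemma G invariant forms and weight function} then gives $\weight_{\omega}(g \ldotp x) = \weight_{\omega}(x)$ for every birational point $x$ and every $g \in G$, so in particular $\mathrm{Min}(\omega)$ is a $G$-stable subset of $\mathrm{Div}(X)$.

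Next I would note that the action $G \times X^{\an} \to X^{\an}$ is continuous: each element $g \in G$ acts via the analytification of the automorphism $g \colon X \to X$, and analytification is a functor to the category of $K$-analytic (hence topological) spaces. Equivalently, this follows from Proposition \ref{prop Berk identification of quotient} applied to the cyclic subgroup $\langle g \rangle$. Therefore each $g$ acts as a homeomorphism on $X^{\an}$ and restricts to a homeomorphism of $\mathrm{Bir}(X)$, so it commutes with the closure operator on $\mathrm{Bir}(X)$.

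Combining these two observations, for any $g \in G$,
\[
g \ldotp \Sk(X,\omega) \;=\; g \ldotp \overline{\mathrm{Min}(\omega)} \;=\; \overline{g \ldotp \mathrm{Min}(\omega)} \;=\; \overline{\mathrm{Min}(\omega)} \;=\; \Sk(X,\omega),
\]
which proves the corollary. There is no real obstacle here: the entire content sits in Lemma \ref{lemma G invariant forms and weight function}, and the corollary is a formal consequence once one records that $G$ acts continuously on $\mathrm{Bir}(X)$.
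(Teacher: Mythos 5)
Your proof is correct and follows the same route as the paper, which simply states that the corollary follows immediately from Lemma \ref{lemma G invariant forms and weight function}. You have merely spelled out the implicit details (that $G$ preserves $\mathrm{Div}(X)$ and acts by homeomorphisms, so the minimality locus and its closure are both $G$-stable), which is exactly what the paper leaves to the reader.
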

\begin{proof}
This follows immediately from Lemma \ref{lemma G invariant forms and weight function}.
\end{proof}

\spa{\label{par divisorial repre quotient} Let $y$ be a divisorial point of $Y^\an$ and consider a regular snc $R$-model $\cY$ of $Y$ adapted to $y$, i.e. such that $y$ is the divisorial point associated to $(\cY, E)$ for some irreducible component $E$ of $\cY_k$. We denote by $\cX$ the normalization of $\cY$ inside $K(X) $, where $K(\cY)=K(Y)=K(X)^{G} \hookrightarrow K(X)$. As $X$ is normal and the quotient map $f:X \rightarrow Y$ is finite, we obtain that $\cX$ is an $R$-model of $X$. Moreover, by normality $\cX$ is regular at the generic points of the special fiber $\cX_k$.}

\spa{We assume $\text{char}(k)=0$. We denote respectively by $R$ and by $B$ the codimension $1$ components of the ramification locus and of the branch locus of $f:X \rightarrow Y$. We set $X^+=(X,R_\redu)$ and $Y^+=(Y,B_\redu)$. Then $f$ extends to a morphism of log schemes $f^+:X^+ \rightarrow Y^+$.
}

\begin{lemma} \label{lemma quotient map is log etale}
The logarithmic canonical bundles $\omega_{X^+/K}^{\log}$ and $\omega_{Y^+/K}^{\log}$ are identified via the pullback $(f^+)^*$. 
\end{lemma}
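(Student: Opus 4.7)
The plan is to verify the identification in codimension one and extend by reflexivity. Since $X$ is smooth, both $\omega^{\log}_{X^+/K}$ and $(f^+)^*\omega^{\log}_{Y^+/K}$ are line bundles, and any isomorphism defined away from a codimension two subset extends uniquely over the ambient smooth variety. Away from the ramification divisor $R$, the map $f$ is \'etale by Zariski--Nagata purity of the branch locus, and both log structures are trivial there, so the identification is automatic on that open.

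It therefore suffices to work at the generic point $\eta$ of a component $R_i$ of $R$, whose image is the generic point $\zeta$ of the corresponding component $B_i$ of $B$. Because $\mathrm{char}(K)=0$, the extension $\widehat{\caO}_{X,\eta}/\widehat{\caO}_{Y,\zeta}$ is tame, so Abhyankar's lemma provides regular parameters in which a local equation of $B_i$ pulls back to a unit times the $e_i$-th power of a local equation of $R_i$, where $e_i$ is the ramification index, while the transverse coordinates pull back by an \'etale change. This yields on one hand the codimension one cycle identity $f^*B_{\redu}=\sum_i e_i R_i$, and on the other the Riemann--Hurwitz formula $K_X = f^*K_Y + \sum_i(e_i-1)R_i$. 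Adding these gives
\[
K_X + R_{\redu} \;=\; f^*(K_Y+B_{\redu}),
\]
which is precisely the asserted identification of log canonical bundles via $(f^+)^*$. Equivalently, in terms of logarithmic differentials, the computation $(f^+)^*(dy_1/y_1)=e_i\,dx_1/x_1 + du/u$ together with $e_i\neq 0$ shows that $(f^+)^*\Omega^{\log}_{Y^+/K}\to \Omega^{\log}_{X^+/K}$ is an isomorphism at $\eta$; the claim for $\omega^{\log}$ follows by passing to top exterior powers.

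The main technical input is Abhyankar's lemma (equivalently, tameness), which is where the assumption $\mathrm{char}(k)=0$ is essential: in wild characteristic, Riemann--Hurwitz picks up extra terms that are not matched by the pullback of $B_{\redu}$, and the log comparison breaks down. Once tameness is in place, the remainder of the argument is a routine unwinding of definitions.
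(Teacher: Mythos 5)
Your argument is essentially the same as the paper's: both combine the Riemann--Hurwitz formula $K_X = f^*K_Y + \sum_i(e_i-1)R_i$ with the codimension-one multiplicity computation $f^*B_{\redu}=\sum_i e_i R_i$ (which together give $K_X+R_\redu = f^*(K_Y+B_\redu)$), the multiplicity step being where $\mathrm{char}=0$, hence tameness, enters. You are somewhat more explicit about two points the paper leaves implicit --- that the identity need only be checked in codimension one and then extends by reflexivity (relevant since $Y=X/G$ may be singular in codimension $\geqslant 2$), and that tameness is the structural reason $\mathrm{mult}_{R_i}(R)=e_i-1$ --- but these are refinements of the same route rather than a different one.
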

\begin{proof}
From a generalization to higher dimension of \cite{Hartshorne1977}, Proposition IV.2.3, we have that $\omega_{X/K}=f^*(\omega_{Y/K}) \otimes\mathcal{O}_{X}(R)$. It follows that 
$$\omega_{X^+/K}^{\log} = \omega_{X/K} \otimes\mathcal{O}_{X}(R_\redu) = f^*(\omega_{Y/K}) \otimes \mathcal{O}_{Y}(R + R_\redu).$$
In order to study the divisor $R + R_\redu$, we consider one irreducible component of $B$. Let $D_B$ be an irreducible component of $B$, denote by $e$ the ramification index of $f$ at $D_B$, and by $D_R$ the support of the preimage of $D_B$. As $\text{mult}_{D_R}(R)=e-1$, $\text{mult}_{D_R}(f^*(D_B))= e$ and $\text{mult}_{D_R}(R_\redu)=1$, we conclude that $R + R_\redu= f^*(B)$ and finally $\omega_{X^+/K}^{\log} = (f^+)^*(\omega_{Y^+/K}^{\log})$.
\end{proof}
\begin{prop} \label{prop:G inv form and corresponding weights}
Let $\omega$ be a $G$-invariant $R_\redu$-logarithmic $m$-pluricanonical form on $X^+$ and let $\overline{\omega}$ be the corresponding $B_\redu$-logarithmic form on $Y^+$ via pullback. Let $y$ be a divisorial point of $Y^\an$. Then, for any divisorial point $x \in (f^\an)^{-1}(y)$, the weights of $\omega$ at $x$ and of $\overline{\omega}$ at $y$ coincide.
\end{prop}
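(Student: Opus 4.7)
The plan is to compute both weights via Lemma \ref{lemma log formula for weight function} on a pair of compatible models, and then compare the resulting divisors via a logarithmic Riemann--Hurwitz argument.

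First I would choose a regular snc $R$-model $\cY$ of $Y$ adapted to $y$ as in paragraph \ref{par divisorial repre quotient}, so that $y = v_E/m_E$ for some irreducible component $E$ of $\cY_k$ of multiplicity $m_E$. Let $\cX$ be the normalization of $\cY$ inside $K(X)$: it is a finite $R$-model of $X$, normal, and regular at the generic points of its special fiber. In equicharacteristic zero I then invoke a log resolution of singularities, which is an isomorphism at those generic points, to replace $\cX$ by a regular snc $R$-model still equipped with a finite morphism $f_R\colon\cX\to\cY$. The divisorial point $x$ with $f^{\an}(x)=y$ corresponds to a component $E'$ of $\cX_k$ lying above $E$; since $\cX_k = f_R^*\cY_k$, its multiplicity satisfies $m_{E'} = e_v\, m_E$ where $e_v$ is the ramification index of $f_R$ along $E'$, and the normalised valuations read $v_x = v_{E'}/m_{E'}$ and $v_y = v_E/m_E$.

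Applying Lemma \ref{lemma log formula for weight function} on each side reduces the problem to proving
\[
v_x\bigl(\divisor_{\cX^+}(\omega)\bigr) = v_y\bigl(\divisor_{\cY^+}(\overline{\omega})\bigr).
\]
The heart of the argument is a logarithmic Riemann--Hurwitz identity for $f_R$ at the generic point of $E'$: I claim that $\omega^{\log}_{\cX^+/S^+} \simeq f_R^*\,\omega^{\log}_{\cY^+/S^+}$ as invertible sheaves locally at $\xi_{E'}$, which extends Lemma \ref{lemma quotient map is log etale} to the models. Using $\omega^{\log}_{\cY^+/S^+} = \omega_{\cY/R}(D_{\cY,\redu} - \cY_k)$ and its analogue on $\cX$, combined with the usual Riemann--Hurwitz relation $\omega_{\cX/R} = f_R^*\omega_{\cY/R}(\mathcal{R})$ (with $E'$ entering the ramification divisor $\mathcal{R}$ with coefficient $e_v-1$), the decomposition $f_R^*E = e_v E' + \cdots$, and the identity $m_{E'} = e_v m_E$, the coefficients of $E'$ on both sides balance. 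The contributions coming from the horizontal components of $R_\redu$ over $B_\redu$ are handled exactly as in the proof of Lemma \ref{lemma quotient map is log etale}.

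From $\omega = (f^+)^*\overline{\omega}$ and the above identification I then deduce $\divisor_{\cX^+}(\omega) = f_R^*\,\divisor_{\cY^+}(\overline{\omega})$ near $\xi_{E'}$, whence $v_{E'}\bigl(\divisor_{\cX^+}(\omega)\bigr) = e_v\, v_E\bigl(\divisor_{\cY^+}(\overline{\omega})\bigr)$; dividing by $m_{E'} = e_v m_E$ yields equality of the normalised valuations and hence of the weights. The main obstacle I anticipate is the careful bookkeeping in this local computation: the horizontal ramification along $R$, the vertical ramification index $e_v$, and the multiplicities $m_E$, $m_{E'}$ of the special fibers must combine so that the coefficients of $E'$ in $\omega^{\log}_{\cX^+/S^+}$ and in $f_R^*\omega^{\log}_{\cY^+/S^+}$ agree exactly, which is precisely the $R$-analogue of the log-\'etale property established over $K$ in Lemma \ref{lemma quotient map is log etale}.
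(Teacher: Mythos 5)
The proposal is correct, but it takes a genuinely different technical route from the paper. The paper proves that, locally near $\xi_{E'}$, the morphism $\cX^+\to\cY^+$ is \emph{log-\'etale} by verifying Kato's criterion (an \'etale check on the underlying schemes together with injectivity and finite cokernel of the chart map $\N\to\N$), and then invokes the general fact that log-\'etale morphisms pull back $\Omega^{\log}$ isomorphically. You instead unwind what log-\'etaleness says about log canonical bundles and verify it by hand: you combine the classical Riemann--Hurwitz identity $\omega_{\cX/R}=f_R^*\omega_{\cY/R}(\mathcal{R})$ with the identity $\cX_k=f_R^*\cY_k$ of Cartier divisors and the coefficient balance $(e_v-1)+1=e_v$ on $E'$, which establishes $\omega^{\log}_{\cX^+/S^+}\simeq f_R^*\,\omega^{\log}_{\cY^+/S^+}$ near $\xi_{E'}$; dividing $v_{E'}=e_v\,v_E$ by $m_{E'}=e_v\,m_E$ then gives equality of the normalized valuations and, via Lemma \ref{lemma log formula for weight function}, of the weights. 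Both routes establish the same local statement; the paper's is more conceptual once the log-\'etale machinery is accepted, while yours is a more elementary divisor-theoretic verification. One notable side effect of your approach is that it applies to each component $E'$ over $E$ directly, so the paper's reduction to a single $F_i$ via $G$-invariance (Lemma \ref{lemma G invariant forms and weight function}) becomes unnecessary.

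Two imprecisions worth noting. First, after passing to a log resolution of $\cX$ you write that the new model is ``still equipped with a finite morphism $f_R\colon\cX\to\cY$''; this is not true globally, since a nontrivial resolution destroys finiteness. It does no harm here because the resolution is an isomorphism near $\xi_{E'}$ and all your divisor computations are local there, but the phrasing should be tightened to ``a morphism to $\cY$ that is finite in a neighbourhood of $\xi_{E'}$.'' Second, the remark that ``contributions from the horizontal components of $R_{\redu}$ over $B_{\redu}$ are handled as in Lemma \ref{lemma quotient map is log etale}'' is vacuous in this local computation: no horizontal divisor passes through the codimension-one vertical point $\xi_{E'}$, so there is nothing to handle; Lemma \ref{lemma quotient map is log etale} is only needed on the generic fibre to ensure $\omega=(f^+)^*\overline{\omega}$ is the right correspondence of forms.
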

\begin{proof}
Let $\cY$ be a model of $Y$ over $R$ such that $y$ has divisorial representation $(\cY,E)$ and the model is regular at the generic point of $E$. Let $\cX$ be the normalization of $\cY$ in $K(X)$: as we observed in Section \ref{par divisorial repre quotient}, it is a model of $X$, regular at generic points of the special fiber $\cX_k$. We denote as well by $f:\cX \rightarrow \cY$ the extension of $f$. The preimage of $E$ coincides with the pullback of the Cartier divisor $E$ on $\cX$, hence $f^{-1}(E)$ still defines a codimension one subset on $\cX$. We denote by $F_i$ the irreducible components of $f^{-1}(E)$ and we associate to $F_i$'s their corresponding divisorial valuations $x_i=(\cX,F_i)$. By Lemma \ref{lemma G invariant forms and weight function}, it is enough to prove the result for one of the $x_i$'s. We denote it by $x = (\cX,F)$ and we compare the weights at $y$ and $x$.


We recall that for log-\'{e}tale morphisms the sheaves of logarithmic differentials are stable under pullback (\cite{Kato1994a}, Proposition 3.12). Furthermore, it suffices to check that, locally around the generic point of $F,$ the morphism $\cX^+ \rightarrow \cY^+$ is a log-\'{e}tale morphism of divisorial log structures, to conclude that the weights coincide. To this purpose, we will apply Kato's criterion for log-\'{e}taleness (\cite{Kato1989}, Theorem 3.5) to log schemes with respect to the \'{e}tale topology.

We denote by $\xi_{F}$ the generic point of $F$ and by $\xi_E$ the generic point of $E$. The divisorial log structures on $\cX^+$ and $\cY^+$ have charts $\N$ at $\xi_{F}$ and $\xi_E$. In the \'{e}tale topology, the normalization morphism $\cX^+ \rightarrow \cY^+$ admits a chart induced by $u:\N \rightarrow \N$ where $1 \mapsto m$ for some positive integer $m$:
\begin{center}
$\xymatrix{
\Spec \caO_{\cX, \xi_{F}} \ar[d] \ar[r] & \Spec \Z[\N] \ar[d] \\
\Spec \caO_{\cY, \xi_{E}}  \ar[r] & \Spec \Z[\N] 
}$
\end{center} Firstly, by the universal property of the fiber product, we have a morphism $$\Spec \caO_{\cX, \xi_{F}} \rightarrow \Spec \caO_{\cY, \xi_{E}} \times_{\Spec \Z[\N] } \Spec \Z[\N]$$ and it corresponds to $$ \caO_{\cY, \xi_{E}}  \otimes_{\Z[\N]} \Z[\N] \rightarrow\caO_{\cX, \xi_{F}}.$$ This is a morphism of finite type with finite fibers between regular rings and by \cite{Liu2002}, Lemma 4.3.20 and \cite{Nowak} it is flat and unramified, hence \'{e}tale. One of the two conditions in Kato's criterion for log-\'{e}taleness is then fulfilled. Secondly, the chart $u:\N \mapsto \N$ induces a group homomorphism $u^{\text{gp}}: \Z \mapsto \Z$; in particular, it is injective and it has finite cokernel. Then $u$ satisfies the second condition of Kato's criterion for log-\'{e}taleness. Therefore we conclude that $\weight_{\overline{\omega}}(y)=\weight_\omega(x)$.
\end{proof}

\begin{prop} \label{prop:G inv and homeo KS of quotient}
Let $\omega$ be a $G$-invariant $R_\redu$-logarithmic pluricanonical form on $X$ and let $\overline{\omega}$ be the corresponding $B_\redu$-logarithmic form. Then the canonical homeomorphism between $X^\an/G$ and $Y^\an$ of Proposition \ref{prop Berk identification of quotient} induces the homeomorphism $$\Sk(X,\omega)/G \simeq \Sk(X/G,\overline{\omega}).$$
\end{prop}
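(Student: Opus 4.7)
The plan is to show that the map $f^\an\colon X^\an\to Y^\an$, which by Proposition~\ref{prop Berk identification of quotient} realizes $Y^\an$ as the topological quotient $X^\an/G$, restricts to a surjection $\Sk(X,\omega)\twoheadrightarrow\Sk(Y,\overline{\omega})$ whose fibers are the $G$-orbits; the desired homeomorphism will then follow from the universal property of the quotient.

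First, by Corollary~\ref{cor:G inv forms and KS}, the subspace $\Sk(X,\omega)\subseteq X^\an$ is $G$-stable, so $f^\an$ restricts to a $G$-invariant continuous map $\Sk(X,\omega)\to Y^\an$, and this factors through the quotient $\Sk(X,\omega)/G$. Via the canonical homeomorphism $X^\an/G\simeq Y^\an$, this factorization is nothing but the restriction of the ambient homeomorphism, hence automatically a topological embedding onto its image.

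The heart of the argument is identifying this image with $\Sk(Y,\overline{\omega})$. I would first observe that the minimal weights agree, that is $\weight_{\omega}(X,R_\redu)=\weight_{\overline{\omega}}(Y,B_\redu)$. This follows from Proposition~\ref{prop:G inv form and corresponding weights} together with the fact that the finite map $f$ sends divisorial points of $X^\an$ to divisorial points of $Y^\an$, and that every divisorial point of $Y^\an$ admits a divisorial preimage in $X^\an$ via the normalization construction of \ref{par divisorial repre quotient}; thus the sets of weight values on divisorial points coincide on the two sides, and so do their infima. Combining this equality with Proposition~\ref{prop:G inv form and corresponding weights}, a divisorial point $x\in X^\an$ achieves the minimum weight for $\omega$ if and only if $y=f^\an(x)$ achieves the minimum weight for $\overline{\omega}$, and every such $y$ arises in this way. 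Since $f$ is finite, $f^\an$ is closed and hence carries closures to closures, so
\[
f^\an\bigl(\Sk(X,\omega)\bigr)=\overline{f^\an(\{\text{divisorial min-weight points of }X^\an\})}=\Sk(Y,\overline{\omega}).
\]

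The main technical concerns are the careful handling of divisorial points under $f^\an$ and the closedness of $f^\an$, both of which are essentially formal consequences of the finiteness of $f$; the substantive input, namely the pointwise matching of weights, is already provided by Proposition~\ref{prop:G inv form and corresponding weights}, and Corollary~\ref{cor:G inv forms and KS} handles the $G$-invariance needed to pass to the quotient.
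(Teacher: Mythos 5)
Your proposal is correct and takes essentially the same approach as the paper, which simply declares the result immediate from Corollary~\ref{cor:G inv forms and KS} (stability of $\Sk(X,\omega)$ under $G$) and Proposition~\ref{prop:G inv form and corresponding weights} (matching of weights along $f^\an$). You have usefully unpacked what "immediate" hides: that $f^\an$ sends divisorial points to divisorial points and is closed because $f$ is finite, that the sets of weights (hence their infima) agree via the normalization construction of \ref{par divisorial repre quotient}, and that the induced map $\Sk(X,\omega)/G\to Y^\an$ is an embedding because it is a restriction of Berkovich's canonical homeomorphism.
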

\begin{proof}
This follows immediately from Corollary \ref{cor:G inv forms and KS} and Proposition \ref{prop:G inv form and corresponding weights}.
\end{proof}

\spa{Let $X$ be a smooth $K$-variety and let $\omega_X$ be a pluricanonical form on $X$. Let $\pr_j:X^n \rightarrow X$ be the $j$-th canonical projection. Then $$\omega= \bigwedge_{1\leqslant j \leqslant n} \pr_j^*\omega_X$$ is a pluricanonical form on $X^n$ and moreover it is invariant under the action of $\mathfrak{S}_n$. We denote by $\overline{\omega}$ the corresponding form on the quotient $X^n/\mathfrak{S}_n$ as in Lemma \ref{lemma quotient map is log etale}.
\begin{prop}  \label{prop semistability and KS symm quotient}
Assume that the residue field $k$ is algebraically closed.  If $X$ admits a semistable log-regular model or a semistable good dlt minimal model, then the Kontsevich-Soibelman skeleton of the $n$-th symmetric product of $X$ associated to $\overline{\omega}$ is PL homeomorphic to the $n$-th symmetric product of the Kontsevich-Soibelman skeleton of $X$ associated to $\omega_X$.
\end{prop}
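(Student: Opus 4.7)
The plan is to reduce to the two-factor case by induction, verify $\mathfrak{S}_n$-equivariance of the resulting identification, and then descend along the quotient map via Proposition \ref{prop:G inv and homeo KS of quotient}. Let $\cX^+$ be a semistable log-regular model of $X$ over $S^+$; the argument in the dlt case is analogous, replacing Theorem \ref{thm semistability and KS skeletons} with Theorem \ref{thm product of dual complex of good min dlt models} and using Proposition \ref{prop dual complex minimal good dlt = essential skeleton any resolution} to identify dual complexes of good dlt minimal models with the corresponding essential skeletons. Form the iterated $fs$ fibred product $\cX^{+,n} = \cX^+ \times^{\text{fs}}_{S^+} \cdots \times^{\text{fs}}_{S^+} \cX^+$, which is log-regular by \cite{Kato1994a}, Theorem 8.2, since semistability implies log-smoothness over $S^+$. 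Grouping the iterated product as $\cX^{+,k-1} \times^{\text{fs}}_{S^+} \cX^+$ and applying Proposition \ref{prop semistability and skeletons} at each step with the second factor semistable, I would obtain a PL homeomorphism $\Sk(\cX^{+,n}) \simeq \Sk(\cX^+)^n$. The weight computation \eqref{equ weight function product}, iterated $n-1$ times together with Theorem \ref{thm semistability and KS skeletons}, then restricts this to a PL homeomorphism
\[
\Phi \colon \Sk(X^n, \omega) \xrightarrow{\sim} \Sk(X, \omega_X)^n.
\]

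Next I would verify that $\Phi$ is $\mathfrak{S}_n$-equivariant, where $\mathfrak{S}_n$ acts by permuting the factors on both sides. From the explicit description given in the proof of Proposition \ref{prop semistability and skeletons}, the $j$-th component of $\Phi$ sends a valuation $v_{z,\varepsilon}$ on $\cX^{+,n}$ to the valuation obtained by precomposing $\varepsilon$ with the inclusion of characteristic monoids $\caC_{\cX,x_j} \hookrightarrow \caC_{\cX^{n},z}$ induced by the $j$-th projection. Since the $\mathfrak{S}_n$-action permutes these inclusions in precisely the same way as it permutes the coordinates of $\Sk(\cX^+)^n$, equivariance should follow by a direct functoriality argument applied to the amalgamated sum structure on characteristic monoids.

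Finally, as $\omega_X$ is a regular pluricanonical form on $X$, the form $\omega = \bigwedge_j \pr_j^* \omega_X$ is a regular, and in particular $R_\redu$-logarithmic, pluricanonical form on $X^n$, where $R$ denotes the ramification divisor of the quotient $f \colon X^n \to \Sym^n X$. Proposition \ref{prop:G inv and homeo KS of quotient} will then give a homeomorphism $\Sk(X^n, \omega)/\mathfrak{S}_n \simeq \Sk(\Sym^n X, \overline{\omega})$, and combining with the $\mathfrak{S}_n$-equivariant $\Phi$ will yield
\[
\Sk(\Sym^n X, \overline{\omega}) \simeq \Sk(X^n, \omega)/\mathfrak{S}_n \simeq \Sk(X, \omega_X)^n / \mathfrak{S}_n = \Sym^n \Sk(X, \omega_X),
\]
as PL complexes, completing the argument.

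The main obstacle I expect is the equivariance statement in the second step: although morally evident, making it rigorous requires unpacking the iterated amalgamated-sum description $\caC_{\cX^n,z} = (\caC_{\cX,x_1} \oplus_{\N} \cdots \oplus_{\N} \caC_{\cX,x_n})^{\text{sat}}$ of characteristic monoids and checking carefully that transposing two factors of the $fs$ fibred product induces the corresponding transposition on the product of skeletons, both as a set and as a PL structure.
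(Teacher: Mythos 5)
Your argument takes essentially the same route as the paper's proof: iterate the KS-skeleton product theorem (Theorem \ref{thm semistability and KS skeletons} in the log-regular case, Theorem \ref{thm product of dual complex of good min dlt models} together with Proposition \ref{prop dual complex minimal good dlt = essential skeleton any resolution} in the dlt case) to identify $\Sk(X^n,\omega)$ with $\Sk(X,\omega_X)^n$, then quotient by $\mathfrak{S}_n$ using Proposition \ref{prop:G inv and homeo KS of quotient}. The equivariance of $\Phi$ that you flag as the main obstacle is dispatched more directly in the paper by functoriality of the retraction construction: each coordinate of $\Phi$ is the map $\overline{\pr}_j$ induced by the projection $\pr_j\colon X^n\to X$, and since the $\mathfrak{S}_n$-action on $X^n$ permutes the $\pr_j$, the induced action on $\Sk(X,\omega_X)^n$ is automatically coordinate permutation, with no need to unwind the amalgamated-sum description of the characteristic monoids.
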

\begin{proof}
Iterating the result of Theorem \ref{thm essential skeleton of product via log-reg} and Theorem \ref{thm product of dual complex of good min dlt models}, we have that the projection map defines a PL homeomorphism of Kontsevich-Soibelman skeletons $$\Sk(X^n, \omega) \xrightarrow{\sim} \Sk(X,\omega_X) \times  \ldots \times \Sk(X,\omega_X).$$ Thus, applying Proposition \ref{prop:G inv and homeo KS of quotient} with the group $\mathfrak{S}_n$ acting on the product $X^n$, we obtain that $$\Sk(X^n/\mathfrak{S}_n, \overline{\omega}) \simeq \Sk(X^n,\omega)/\mathfrak{S}_n \simeq  \Sk(X,\omega_X)^n/ \mathfrak{S}_n$$
Since the action on the Kontsevich-Soibelman skeleton $\Sk(X^n,\omega)$ is induced from the symmetric action on $X^n$, and the projections $\pr_j:X^n \rightarrow X$ functorially induce the projections $\overline{\pr_j}: \Sk(X,\omega_X)^n \rightarrow \Sk(X,\omega)$, the action of $\mathfrak{S}_n$ on $\Sk(X,\omega)^n$ is exactly by permutations of the components. Thus, $$ \Sk(X,\omega_X)^n/ \mathfrak{S}_n \simeq \text{Sym}^n(\Sk(X,\omega_X)).$$
\end{proof}
}

\subsection{The essential skeleton of Hilbert schemes of a $\text{K3}$ surface} \label{sect essential sk Hilb}

\spa{Let $S$ be an irreducible regular surface. We consider $\Hilb^n(S)$ the Hilbert scheme of $n$ points on $S$: by \cite{Fogarty} it is an irreducible regular variety of dimension $2n$. Moreover, the morphism $$\rho_{HC}: \Hilb^n(S) \rightarrow S^n/\mathfrak{S}_n$$ that sends a zero-dimensional scheme $Z \subseteq S$ to its associated zero-cycle $\text{supp}(Z)$ is a birational morphism, called the Hilbert-Chow morphism.}

\spa{Let $S$ be a $\text{K}3$ surface over $K$, namely $S$ is a complete non-singular variety of dimension two such that $\Omega_{S/K}^2 \simeq \caO_S \text{ and } H^1(S,\caO_S) = 0$. In particular $S$ is a variety with trivial canonical line bundle. 
\begin{cor}  \label{cor essential skeleton hilb}
Assume that the residue field $k$ is algebraically closed. Suppose that $S$ admits a semistable log-regular model or a semistable good dlt minimal model. Then the essential skeleton of the Hilbert scheme of $n$ points on $S$ is PL homeomorphic to the $n$-th symmetric product of the essential skeleton of $S$ $$\Sk(\Hilb^n(S)) \xrightarrow{\sim} \text{Sym}^n(\Sk(S)).$$ 
\end{cor}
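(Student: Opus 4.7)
The plan is to deduce the corollary from Proposition \ref{prop semistability and KS symm quotient} via the Hilbert--Chow morphism. Since $S$ is a K3 surface, its canonical bundle is trivial, so there exists a nowhere-vanishing regular canonical form $\omega_S$ which is a global generator of $\omega_{S/K}$. Every regular pluricanonical form on $S$ is then a regular multiple of a power of $\omega_S$, so the associated weight function only shifts by a constant from $\weight_{\omega_S}$; in particular $\Sk(S)=\Sk(S,\omega_S)$. Setting $\omega=\bigwedge_{j=1}^n \pr_j^*\omega_S$ and letting $\overline\omega$ denote its descent to $S^n/\mathfrak{S}_n$, Proposition \ref{prop semistability and KS symm quotient} applied to $X=S$ yields a PL homeomorphism
\[
\Sk(S^n/\mathfrak{S}_n,\overline\omega)\ \simeq\ \Sym^n\Sk(S).
\]

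It remains to identify $\Sk(\Hilb^n(S))$ with the left-hand side. The Hilbert--Chow morphism $\rho_{HC}\colon \Hilb^n(S)\to S^n/\mathfrak{S}_n$ is a crepant resolution of singularities, which is the classical manifestation of the hyper-K\"ahler structure on $\Hilb^n(S)$. Consequently $\rho_{HC}^*\overline\omega$ is a nowhere-vanishing regular canonical form on $\Hilb^n(S)$, and by the same reasoning as in the first paragraph $\Sk(\Hilb^n(S))=\Sk(\Hilb^n(S),\rho_{HC}^*\overline\omega)$. Since the weight function attached to a rational pluricanonical form depends only on the form as a section of the canonical bundle on the birational points $\mathrm{Bir}(\,\cdot\,)$, and since crepancy of $\rho_{HC}$ ensures that $\weight_{\rho_{HC}^*\overline\omega}$ and $\weight_{\overline\omega}$ are identified under the canonical bijection of birational points of $\Hilb^n(S)$ and $S^n/\mathfrak{S}_n$, these two Kontsevich--Soibelman skeletons coincide. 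Combining with the previous display produces the desired PL homeomorphism.

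The main obstacle in the plan is the precise justification of this last invariance step, since Proposition \ref{prop birational invariance essential skeleton for pairs} is formulated for log resolutions of snc pairs whereas $S^n/\mathfrak{S}_n$ has quotient singularities along the big diagonal. The cleanest way around this is either to appeal directly to the birational invariance of the weight function from \cite{MustataNicaise}, or to choose a log resolution of $(S^n/\mathfrak{S}_n,B_{\mathrm{red}})$ factoring through $\rho_{HC}$ and compare essential skeletons along the two crepant pieces using Proposition \ref{prop dual complex minimal good dlt = essential skeleton any resolution}; in both approaches the crepancy of $\rho_{HC}$ is exactly what makes the discrepancy divisor in the pullback formula vanish and the two skeletons agree.
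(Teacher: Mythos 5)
Your proposal is correct and takes essentially the same route as the paper, which cites Proposition~\ref{prop semistability and KS symm quotient} together with the birational invariance of the essential skeleton (\cite{MustataNicaise}, Proposition~4.10.1) in a one-line proof. Your elaboration on why crepancy of the Hilbert--Chow morphism lets one reduce $\Sk(\Hilb^n(S))$ to a single Kontsevich--Soibelman skeleton and then compare it with $\Sk(S^n/\mathfrak{S}_n,\overline\omega)$ is exactly the content of the cited birational invariance, just spelled out rather than invoked by reference.
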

\begin{proof}
This follows immediately from Corollary \ref{prop semistability and KS symm quotient} and the birational invariance of the essential skeleton, \cite{MustataNicaise}, Proposition 4.10.1.
\end{proof}
\begin{prop} \label{prop top essential skeleton Hilb}
If the essential skeleton of $S$ is PL homeomorphic to a point, a closed interval or the $2$-dimensional sphere, then the essential skeleton of $\Hilb^n(S)$ is PL homeomorphic to a point, the standard $n$-simplex or $\CP^n$ respectively. 
\end{prop}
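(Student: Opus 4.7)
The plan is to apply Corollary~\ref{cor essential skeleton hilb} to reduce the statement to a direct computation of the $n$-th symmetric product of $\Sk(S)$. Since the PL homeomorphism type of the essential skeleton is what we care about, we just need to identify $\Sym^n(\Sk(S))$ with the target space in each of the three cases.

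First I would dispatch the easy cases. If $\Sk(S)$ is a point, then $\Sym^n(\mathrm{pt}) = \mathrm{pt}$. If $\Sk(S)$ is PL homeomorphic to the closed interval $[0,1]$, then one can write
\[
\Sym^n([0,1]) = \{(t_1,\dots,t_n)\in [0,1]^n \,:\, t_1\leqslant t_2\leqslant \dots \leqslant t_n\},
\]
which is a convex polytope in $\R^n$ cut out by the standard inequalities defining the order simplex, and is therefore PL homeomorphic to the standard $n$-simplex.

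The main obstacle is the case $\Sk(S)\simeq S^2$, where we want a PL homeomorphism $\Sym^n(S^2)\simeq \CP^n$. Here I would invoke the classical identification $\Sym^n(\CP^1)\simeq \CP^n$: an unordered $n$-tuple $\{p_1,\dots,p_n\}\subset \CP^1$ corresponds to the class of the degree $n$ homogeneous polynomial $\prod_{i=1}^n \ell_{p_i}$, where $\ell_{p}$ is a linear form vanishing at $p$, and this assignment gives a holomorphic isomorphism of complex manifolds. To upgrade this to a PL homeomorphism with respect to the PL structure on $\Sk(S)$ inherited from the dual complex construction, I would use the fact that in dimension $\leqslant 3$ every topological manifold admits a unique PL structure (Moise's theorem). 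Thus the PL structure on $\Sk(S)$ agrees with the standard PL structure on $S^2=\CP^1$, the smooth structure on $\Sym^n(\CP^1)$ induces the canonical PL structure on $\Sym^n(S^2)$, and the holomorphic isomorphism to $\CP^n$ (with its canonical PL structure as a smooth manifold) yields the desired PL homeomorphism.

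In all three cases the resulting identification is compatible with the identification $\Sk(\Hilb^n(S))\simeq \Sym^n(\Sk(S))$ of Corollary~\ref{cor essential skeleton hilb}, which finishes the proof.
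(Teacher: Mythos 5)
Your proof takes essentially the same route as the paper: reduce via Corollary~\ref{cor essential skeleton hilb} to computing $\Sym^n(\Sk(S))$, dispatch the point and interval cases by hand (the order simplex argument is exactly what the paper leaves implicit as ``trivial''), and identify $\Sym^n(S^2)$ with $\CP^n$ via the classical correspondence of unordered $n$-tuples in $\CP^1$ with degree-$n$ homogeneous polynomials. The paper's citation to \cite{Hatcher}, Section 4K is precisely this identification.

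One small caveat on the PL upgrade in the $S^2$ case: Moise's theorem settles the PL structure on $\Sk(S)\simeq S^2$ itself (dimension $2$), but the statement needs a PL homeomorphism of the $2n$-dimensional space $\Sym^n(\Sk(S))$ with $\CP^n$, and your sentence ``the smooth structure on $\Sym^n(\CP^1)$ induces the canonical PL structure on $\Sym^n(S^2)$'' glosses over why the \emph{quotient} PL structure on $\Sym^n(S^2)$ — the one inherited from a $\mathfrak{S}_n$-equivariant triangulation of $\Sk(S)^n$, which is what the identification in Corollary~\ref{cor essential skeleton hilb} produces — coincides with the PL structure that the complex manifold $\CP^n$ carries by Whitehead--Cairns triangulation. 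This does hold (equivariant smooth triangulation of $(\CP^1)^n$ descends to a smooth triangulation of $\CP^n$, and smooth triangulations of a given smooth manifold are PL equivalent), but it is a separate fact from Moise and deserves to be named. The paper's own one-line proof is at least as terse on this point, so your proposal is faithful to the paper's argument while making the easy cases explicit.
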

\begin{proof} Applying Corollary \ref{cor essential skeleton hilb}, we reduce to the computation of the symmetric product of a point, a closed interval or the sphere $S^2$. Then, the result is trivially true in the first two cases, and follows from \cite{Hatcher}, Section 4K in the third case.
\end{proof}

}

\subsection{The essential skeleton of generalised Kummer varieties} \label{sect essential sk Kummer}
\spa{Let $A$ be an abelian surface over $K$, namely a complete non-singular, connected group variety of dimension two. Since $A$ is a group variety, the canonical line bundle is trivial and the group structure provides a multiplication morphism $m_{n+1}: A \times A \times \ldots \times A \rightarrow A$ that is invariant under the permutation action of $\mathfrak{S}_{n+1}$, hence it induces a morphism $$\Sigma_{n+1}: \Hilb^{n+1}(A) \xrightarrow{\rho_{HC}} \text{Sym}^{n+1}(A) \rightarrow A$$ by composition with the Hilbert-Chow morphism. Then $\text{K}_n(A) = \Sigma_{n+1}^{-1}(1)$ is called the $n$-th generalised Kummer variety and is a hyper-K\"{a}hler manifold of dimension $2n$ (\cite{Beauville1983}).
}
\spa{In \cite{HalvardHalleNicaise2017}, Proposition 4.3.2, Halle and Nicaise, using Temkin's generalization of the weight function (\cite{Temkina}), prove that the essential skeleton of an abelian variety $A$ over $K$ coincides with the construction of a skeleton of $A$ done by Berkovich in \cite{Berkovich1990}, Paragraph 6.5. It follows from this identification and \cite{Berkovich1990}, Theorem 6.5.1 that the essential skeleton of $A$ has a group structure, compatible with the group structure on $A^\an$ under the retraction $\rho_A$ of $A^\an$ onto the essential skeleton, so the following diagram commutes
\begin{center}
$\xymatrix{
(A^\an)^{n+1} \ar[d]_{(\rho_A)^{n+1}} \ar[r]^-{m_{n+1}^\an} & A^\an \ar[d]^{\rho_A} \\
\Sk(A)^n \ar[r]^-{\mu_{n+1}} & \Sk(A)
}$
\end{center} where $\mu$ denotes the multiplication of $\Sk(A)$.
}
\begin{prop} \label{prop essential skeleton Kummer}
Assume that the residue field $k$ is algebraically closed.  Suppose that $A$ admits a semistable log-regular model or a semistable good dlt minimal model. Then the essential skeleton of the $n$-th generalised Kummer variety is PL homeomorphic to the symmetric quotient of the kernel of the morphism $\mu$, namely $$\Sk(\text{K}_n(A)) \simeq \Sk\big(m_{n+1}^{-1}(1)/\mathfrak{S}_{n+1}\big) \simeq \mu_{n+1}^{-1}(1)/\mathfrak{S}_{n+1}.$$ 
\end{prop}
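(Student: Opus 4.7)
The plan is a three-step reduction: first relate $\text{K}_n(A)$ birationally to a finite quotient of a subvariety of $A^{n+1}$, then identify the essential skeleton of that subvariety via the main theorem on products, and finally descend to the symmetric quotient.

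First, the Hilbert-Chow morphism $\rho_{HC}: \Hilb^{n+1}(A) \to \text{Sym}^{n+1}(A)$ is birational, and restricting to the fiber of $\Sigma_{n+1}$ over the identity yields a birational morphism $\text{K}_n(A) \to m_{n+1}^{-1}(1)/\mathfrak{S}_{n+1}$. Both varieties have trivial canonical class ($\text{K}_n(A)$ being hyper-K\"ahler, and the right-hand side a quotient of an abelian variety by a finite group preserving the volume form up to sign, hence klt), so the birational invariance of the essential skeleton (\cite{MustataNicaise}, Proposition 4.10.1, extended to pairs in Proposition \ref{prop birational invariance essential skeleton for pairs}) gives $\Sk(\text{K}_n(A)) \simeq \Sk(m_{n+1}^{-1}(1)/\mathfrak{S}_{n+1})$.

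Next I would identify $\Sk(m_{n+1}^{-1}(1))$ with the subspace $\mu_{n+1}^{-1}(1) \subset \Sk(A)^{n+1}$. The subvariety $m_{n+1}^{-1}(1)$ is an abelian variety (a translate of $\ker m_{n+1}$) isomorphic to $A^n$ via projection onto the first $n$ coordinates; since $A$ admits a semistable model, so do $A^n$ and $A^{n+1}$. Iterating Theorem \ref{thm essential skeleton of product via log-reg} yields PL homeomorphisms $\Sk(A^{n+1}) \simeq \Sk(A)^{n+1}$ and $\Sk(m_{n+1}^{-1}(1)) \simeq \Sk(A)^n$, and identifies $\rho_{A^{n+1}}$ with $(\rho_A)^{n+1}$. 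The commutative diagram preceding the proposition then shows that $(\rho_A)^{n+1}$ sends $(m_{n+1}^{-1}(1))^\an$ into the subgroup $\mu_{n+1}^{-1}(1)$. Combined with the group structure on $\Sk(A)$ from \cite{HalvardHalleNicaise2017}, matching dimensions of the resulting real tori gives the desired PL homeomorphism $\Sk(m_{n+1}^{-1}(1)) \xrightarrow{\sim} \mu_{n+1}^{-1}(1)$.

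Finally I would take the $\mathfrak{S}_{n+1}$-quotient. The permutation action preserves $m_{n+1}^{-1}(1)$ (since the group law on $A$ is commutative) and fixes, up to sign, the wedge $\bigwedge \pr_j^* \omega_A$ on $A^{n+1}$; a suitable even power descends to an $\mathfrak{S}_{n+1}$-invariant pluricanonical form on $m_{n+1}^{-1}(1)$. Applying Proposition \ref{prop:G inv and homeo KS of quotient} with $G = \mathfrak{S}_{n+1}$, and noting that the essential skeleton of $m_{n+1}^{-1}(1)$ agrees with the Kontsevich-Soibelman skeleton of such a nowhere-vanishing pluricanonical form, gives
$$\Sk(m_{n+1}^{-1}(1)/\mathfrak{S}_{n+1}) \simeq \Sk(m_{n+1}^{-1}(1))/\mathfrak{S}_{n+1} \simeq \mu_{n+1}^{-1}(1)/\mathfrak{S}_{n+1},$$
which combined with Step 1 gives the proposition. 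The main technical obstacle is Step 2: showing that the induced map $\Sk(m_{n+1}^{-1}(1)) \to \mu_{n+1}^{-1}(1)$ is a PL homeomorphism. Injectivity is inherited from the closed embedding $m_{n+1}^{-1}(1) \hookrightarrow A^{n+1}$ after checking compatibility with the abelian topological group structures, while surjectivity follows from a dimension count applied to the short exact sequence $0 \to m_{n+1}^{-1}(1) \to A^{n+1} \xrightarrow{m_{n+1}} A \to 0$ on essential skeletons.
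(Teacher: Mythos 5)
Your overall strategy — birational invariance, then a PL homeomorphism $\Sk(m_{n+1}^{-1}(1)) \simeq \mu_{n+1}^{-1}(1)$, then passage to the $\mathfrak{S}_{n+1}$-quotient via Proposition \ref{prop:G inv and homeo KS of quotient} — matches the paper's. Steps 1 and 3 are fine (the concern about signs in the wedge of $2$-forms is harmless but unnecessary, since permuting $2$-forms never introduces a sign). The genuine gap is in Step 2, where you claim a PL homeomorphism $\Sk(L) \xrightarrow{\sim} \Lambda$ with $L = m_{n+1}^{-1}(1)$ and $\Lambda = \mu_{n+1}^{-1}(1)$. First, "matching dimensions of the resulting real tori" does not produce a specific homeomorphism; it just says the two spaces are abstractly homeomorphic. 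You need a concrete map and you need to show it is bijective and PL. Second, and more importantly, to deduce $\Sk(L)/\mathfrak{S}_{n+1} \simeq \Lambda/\mathfrak{S}_{n+1}$ you need your homeomorphism to be $\mathfrak{S}_{n+1}$-\emph{equivariant}, and you never address this. This is exactly where the subtlety lies: any identification $L \cong A^n$ (e.g.\ projection to the first $n$ coordinates) breaks the $\mathfrak{S}_{n+1}$-symmetry and is equivariant only for a copy of $\mathfrak{S}_n$ inside $\mathfrak{S}_{n+1}$.

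The paper's proof is carefully engineered to close this gap. It introduces two isomorphisms $f_n, f_{n+1}: L \to A^n$, equivariant for the subgroups $\mathfrak{S}_n', \mathfrak{S}_n'' \leqslant \mathfrak{S}_{n+1}$ fixing the $n$-th (resp.\ $(n+1)$-th) coordinate, and the intertwining map $\psi$ between them. Since $\mathfrak{S}_n'$ and $\mathfrak{S}_n''$ generate $\mathfrak{S}_{n+1}$, and the induced maps $\overline{f}_n, \overline{f}_{n+1}$, and the corresponding maps $g_n, g_{n+1}: \Lambda \to \Sk(A)^n$, all fit into a single $\overline{\psi}$-commuting diagram, the resulting identification $\Sk(L) \to \Lambda$ is equivariant for the full group, and the quotients agree. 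Your retraction-based construction $\Sk(L) \subseteq L^\an \hookrightarrow (A^{n+1})^\an \xrightarrow{(\rho_A)^{n+1}} \Sk(A)^{n+1}$ is plausibly $\mathfrak{S}_{n+1}$-equivariant by naturality, and could give a cleaner proof if you explicitly computed the map in coordinates (using $L \cong A^n$ and the group structure on $\Sk(A)$ from \cite{HalvardHalleNicaise2017}) and checked equivariance; but as written, those verifications are missing, and they are the heart of the argument.
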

\begin{proof}
The first homeomorphism follows from the birational invariance of the essential skeleton (\cite{MustataNicaise}, Proposition 4.10.1). We denote by $$L=m_{n+1}^{-1}(1) \quad \text{ and }\quad \Lambda=\mu_{n+1}^{-1}(1).$$ For any choice of an $\mathfrak{S}_{n+1}$-invariant generating canonical form on $A$, it follows from Proposition \ref{prop:G inv and homeo KS of quotient} that $\Sk(L/\mathfrak{S}_{n+1}) \simeq \Sk(L)/\mathfrak{S}_{n+1}$. We reduce to study the quotients $\Sk(L)/\mathfrak{S}_{n+1}$ and $\Lambda/\mathfrak{S}_{n+1}$.

Let $\mathfrak{S}_n'$ and $\mathfrak{S}_n''$ be the subgroups of $\mathfrak{S}_{n+1}$ of the permutations that fix $n$ and $n+1$ respectively. Then $\mathfrak{S}_{n+1}$ is generated by the two subgroups, so its action on $\Sk(L)$ and $\Lambda$ is completely determined by the actions of these subgroups. We consider the following isomorphisms 
\begin{align*}
f_n: &L \xrightarrow{\sim} A^n \quad (z_1,\ldots,z_{n+1}) \mapsto (z_1,\ldots,z_{n-1},z_{n+1}) \\
f_{n+1}: &L \xrightarrow{\sim} A^n \quad (z_1,\ldots,z_{n+1}) \mapsto (z_1,\ldots,z_{n-1},z_{n}).
\end{align*} Then $f_n$ is $\mathfrak{S}_n'$-equivariant, $f_{n+1}$ is $\mathfrak{S}_n''$-equivariant and the morphism $\psi$
\begin{center}
\xymatrixrowsep{0.1pc} $\xymatrix{
& L  \ar[ldd]_{f_{n+1}} \ar[ddr]^{f_n} &\\
& & \\
A^n \ar[rr]^{\psi}& & A^n \\
(z_1,\ldots,z_{n-1},z_{n}) \ar@{|->}[rr]& & (z_1,\ldots, \prod_{i=1}^{n} z_i^{-1}).
}$
\end{center} is equivariant with respect to the action of $\mathfrak{S}_n''$ on the source and of $\mathfrak{S}_n'$ on the target. Hence, we obtain a commutative diagram of equivariant isomorphisms. We denote by $\overline{f}_n$, $\overline{f}_{n+1}$ and $\overline{\psi}$ the isomorphisms induced on the essential skeletons. By Theorem \ref{thm essential skeleton of product via log-reg} and Theorem \ref{thm product of dual complex of good min dlt models} we can identify $\Sk(A^n)$ with $\Sk(A)^n$. Thus, we have the commutative diagram
\begin{center}
\xymatrixrowsep{0.1pc} $\xymatrix{
& \Sk(L)  \ar[ldd]_{\overline{f}_{n+1}} \ar[ddr]^{\overline{f}_n} &\\
& & \\
\Sk(A)^n \ar[rr]^{\overline{\psi}}& & \Sk(A)^n \\
(v_1,\ldots,v_{n-1},v_{n}) \ar@{|->}[rr]& & (v_1,\ldots, \prod_{i=1}^{n} v_i^{-1}).
}$
\end{center} Then the action of $\mathfrak{S}_{n+1}$ on $\Sk(L)$ is induced by the isomorphisms $\overline{f}_n$ and $\overline{f}_{n+1}$ from the actions of $\mathfrak{S}_n''$ and $\mathfrak{S}_n'$ on $\Sk(A)^n$ and these actions are compatible as $\overline{\psi}$ is equivariant.

In a similar way, $\Lambda$ is isomorphic to $n$ copies of $\Sk(A)$ and comes equipped with an action of $\mathfrak{S}_{n+1}$. So, we have equivariant projections $g_n$ and $g_{n+1}$ with respect to $\mathfrak{S}_n'$ and $\mathfrak{S}_n''$. The equivariant morphism that completes and makes the diagram commutative is $\overline{\psi}$. Finally, we have the equivariant commutative diagram
\begin{center}
\xymatrixrowsep{1pc}
\xymatrixrowsep{1pc}
$\xymatrix{
& \Sk(L)  \ar[ld]_{\overline{f}_{n+1}} \ar[rd]^{\overline{f}_{n}}  &\\
\mathfrak{S}_n'' \circlearrowleft\Sk(A)^n  \ar[rr]^{\overline{\psi}}& & \Sk(A)^n\circlearrowright \mathfrak{S}_n' \\
& \Lambda\ar[lu]^{g_{n+1}}\ar[ru]_{g_n} &
}$
\end{center} and we conclude that the quotients $\Sk(L)/\mathfrak{S}_{n+1}$ and $\Lambda/\mathfrak{S}_{n+1}$ are homeomorphic.
\end{proof}

\begin{prop} \label{prop top essential skeleton Kummer}
If the essential skeleton of $A$ is PL homeomorphic to a point, the circle $S^1$ or the torus $S^1 \times S^1$, then the essential skeleton of $\text{K}_n(A)$ is PL homeomorphic to a point, the standard $n$-simplex or $\CP^n$ respectively. 
\end{prop}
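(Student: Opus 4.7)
My plan is to combine Proposition \ref{prop essential skeleton Kummer} with explicit topological descriptions of the quotient $\mu_{n+1}^{-1}(0)/\mathfrak{S}_{n+1}$, treating the three possible PL types for $\Sk(A)$ in turn.

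By Proposition \ref{prop essential skeleton Kummer}, it suffices to identify $\mu_{n+1}^{-1}(0)/\mathfrak{S}_{n+1}$. Using the group structure on $\Sk(A)$ recalled before Proposition \ref{prop essential skeleton Kummer}, under the three hypotheses $\Sk(A)$ is, as a compact connected abelian PL group, the trivial group, $\R/\Z$, or $(\R/\Z)^2$ respectively. The trivial case is immediate.

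If $\Sk(A) = \R/\Z$, then $\mu_{n+1}^{-1}(0)$ is the kernel of the sum map $(\R/\Z)^{n+1} \to \R/\Z$, namely the standard maximal torus of $SU(n+1)$ equipped with the permutation (Weyl) action of $\mathfrak{S}_{n+1}$. Its quotient is the fundamental Weyl alcove, which by the classical theory of the root system of type $A_n$ is a standard $n$-simplex; the identification is PL since both sides inherit integral affine structures from $\R^{n+1}$.

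For the remaining case $\Sk(A) = (\R/\Z)^2$, I would fix a compatible complex structure presenting $T^2$ as an elliptic curve $E$, so that $\mu_{n+1}^{-1}(0)/\mathfrak{S}_{n+1}$ is identified with the fiber over $0$ of the Abel--Jacobi map $\Sym^{n+1}(E) \to E$. By Mattuck's classical theorem, this morphism presents $\Sym^{n+1}(E)$ as a $\CP^n$-bundle over $E$, whence the fiber is homeomorphic to $\CP^n$. The main obstacle is upgrading this to a PL homeomorphism, since the skeleton's PL structure is intrinsic and a priori independent of any complex structure on $T^2$; I would overcome it by matching the stratification of $\mu_{n+1}^{-1}(0)/\mathfrak{S}_{n+1}$ induced by coincident coordinates with the toric stratification of $\CP^n$ by coordinate subspaces, thereby realising $\CP^n$ as the toric PL manifold associated to the fan of a standard $n$-simplex.
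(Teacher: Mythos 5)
Your treatment of the point and circle cases is sound and in substance matches the paper, which simply cites Morton's theorem (which is exactly the identification of the $\mathfrak{S}_{n+1}$-quotient of $\ker((\R/\Z)^{n+1}\to\R/\Z)$ with the Weyl alcove, an $n$-simplex) for the $S^1$ case. For the torus case your route (Abel--Jacobi/Mattuck) diverges from the paper, which instead invokes Looijenga's Theorem 3.4: the quotient of $\hat{Q}(A_n)\otimes E$ by the $A_n$ Weyl group is the weighted projective space $\mathbb{P}(1,g_1,\dots,g_n)$ with weights the coefficients of the highest root, which for $A_n$ are all $1$, yielding $\CP^n$ at once with a stratification compatible with the chamber decomposition. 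You correctly identify that Mattuck only gives a topological homeomorphism and that the PL upgrade is the real issue, but the strategy you propose to close the gap does not work: the stratification of $\mu_{n+1}^{-1}(0)/\mathfrak{S}_{n+1}$ by coincident coordinates is the branch/discriminant stratification, and this is not the toric stratification of $\CP^n$ by coordinate hyperplanes. Already for $n=1$ this fails: $\mu_2^{-1}(0)/\mathfrak{S}_2 \cong E/\{\pm1\}$ has four orbifold (branch) points, while the toric stratification of $\CP^1$ has two fixed points, so no stratum-preserving identification exists. Thus, while your topological conclusion for the torus case is right, the claimed mechanism for establishing the PL homeomorphism is incorrect and you would need to replace it — for instance by appealing to Looijenga's theorem as the paper does, which produces the isomorphism with $\CP^n$ directly from the root-system combinatorics together with the affine structure of the torus rather than from a choice of complex structure and the Abel--Jacobi map.
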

\begin{proof}
The case of the point is trivial. For the circle $S^1$, it follows directly from \cite{Morton}, Theorem. To prove the result for the torus $S^1 \times S^1$, we apply \cite{Looijenga}, Theorem 3.4: the action of the symmetric group corresponds to the root system of $A_n$, the highest root is the sum of the simple roots, each with coefficient $1$, and so the quotient is the complex projective space of dimension $n$.
\end{proof}

\subsection{Remarks on hyper-K{\"a}hler varieties} 
\spa{The cases we consider in Proposition \ref{prop top essential skeleton Hilb} and Proposition \ref{prop top essential skeleton Kummer} are motivated by the work of Kulikov, Persson and Pinkham. In \cite{Kulikov} and \cite{PerssonPinkham1981}, they consider degenerations  over the unit complex disk, of surfaces such that some power of the canonical bundle is trivial. They prove that, after base change and birational transformations, any such degeneration can be arranged to be semistable with trivial canonical bundle, namely a \emph{Kulikov degeneration}. Then, they classify the possible special fibers of Kulikov degenerations according to the type of the degeneration.

We recall that the monodromy operator $T$ on $\text{H}^2(X_t, \Q)$ of the fibers $X_t$ of a Kulikov degeneration is unipotent, so we denote by $\nu$ the nilpotency index of $\log(T)$, namely the positive integer such that $\log(T)^\nu=0$ and $\log(T)^{(\nu-1)} \neq 0$. The type of the Kulikov degeneration is defined as the nilpotency index $\nu$ and called type I, II or III accordingly to it.

It follows from \cite{Kulikov}, Theorem II, that the dual complex of the special fiber of a Kulikov degeneration of a $\text{K}3$ surface is a point, a closed interval or the sphere $S^2$ according to the respective type. For a degeneration of abelian surfaces, the dual complex of the special fiber is homeomorphic to a point, the circle $S^1$ or the torus $S^1 \times S^1$ according to the three types (see an overview of these results in \cite{FriedmanMorrison}). In all cases, the dimension of the dual complex is equal $\nu-1$, hence determined by the type.
}

\spa{Hilbert schemes of $\text{K}3$ surfaces and generalised Kummer varieties represent two families of examples of hyper-K\"{a}hler varieties. For a semistable degeneration of hyper-K\"{a}hler manifolds over the unit disk, it is possible to define the type as the nilpotency index of the monodromy operator on the second cohomology group. It naturally extends the definition for Kulikov degenerations.

In \cite{KollarLazaSaccaEtAl2017}, Koll\'{a}r, Laza, Sacc\`{a} and Voisin study the essential skeleton of a degeneration of hyper-K\"{a}hler manifolds in terms of the type. More precisely, in Theorem 0.10, given a minimal dlt degeneration of $2n$-dimensional hyper-K\"{a}hler manifolds, firstly they prove that the dual complex of the special fiber has dimension $(\nu-1)n$, where $\nu$ denotes the type of the degeneration. Secondly, they prove that, in the type III case, the dual complex is a simply connected closed pseudo-manifold with the rational homology of $\CP^n$.

From this prospective, Proposition \ref{prop top essential skeleton Hilb} and Proposition \ref{prop top essential skeleton Kummer} confirm and strengthen their result for the specific cases of Hilbert schemes and generalized Kummer varieties. In particular, we turn the rational cohomological description of the essential skeleton (Theorem 0.10(ii)) into a topological characterization.

For Hilbert schemes associated to some type II degenerations of $\text{K}3$ surfaces, a complementary proof of our result is due to Gulbrandsen, Halle, Hulek and Zhang, see \cite{GulbrandsenHalleHulek2016} and \cite{GulbrandsenHalleHulekEtAl}. Their approach is based on the method of \textit{expanded degenerations}, which first appeared in \cite{Li}, and on the construction of suitable GIT quotients, in order to obtain an explicit minimal dlt degeneration for the associated family of Hilbert schemes.
}
\spa{The structure of the essential skeleton of a degeneration of hyper-K\"{a}hler manifolds is relevant in the context of mirror symmetry and in view of the work of Kontsevich and Soibelman (\cite{KontsevichSoibelmana}, \cite{KontsevichSoibelman}). The SYZ fibration (\cite{StromingerYauZaslow}) is a conjectural geometric explanation for the phenomenon of mirror symmetry and, roughly speaking, asserts the existence of a special Lagrangian fibration, such that mirror pairs of manifolds with trivial canonical bundle should admit fiberwise dual special Lagrangian fibrations. Moreover, the expectation is that, for type III degenerations of $2n$-dimensional hyper-K\"{a}hler manifolds, the base of the SYZ fibration is $\CP^n$ (see for instance \cite{Hwang}).

The most relevant fact from our prospective is that Kontsevich and Soibelman predict that the base of the Lagrangian fibration of a type III degeneration is homeomorphic to the essential skeleton. So, the outcomes on the topology of the essential skeleton we obtain in Proposition \ref{prop top essential skeleton Hilb} and Proposition \ref{prop top essential skeleton Kummer} match the predictions of mirror symmetry about the occurrence of $\CP^n$ in the type III case. 
}

\bibliographystyle{amsalpha}      
\bibliography{References}

\end{document}